\documentclass[12pt]{amsart}

\usepackage[utf8]{inputenc}
\usepackage{lmodern} 
\usepackage{microtype} 
\usepackage[a4paper, margin=1in]{geometry}
\usepackage{parskip} 
\numberwithin{equation}{section}

\usepackage{amsmath, amssymb, amsthm}
\usepackage{mathtools} 
\usepackage{mathrsfs}  
\usepackage{bm}        

\theoremstyle{plain}
\newtheorem{theorem}{Theorem}[section]
\newtheorem{lemma}[theorem]{Lemma}
\newtheorem{proposition}[theorem]{Proposition}
\newtheorem{corollary}[theorem]{Corollary}

\theoremstyle{definition}
\newtheorem{definition}[theorem]{Definition}

\theoremstyle{remark}
\newtheorem{remark}[theorem]{Remark}

\newcommand{\R}{\mathbb{R}}

\newcommand{\E}{\mathbb{E}}
\newcommand{\ra}{\rightarrow}

\newcommand{\C}{\mathbb{C}}

\newcommand{\cE}{\mathcal{E}}

\newcommand{\lip}{\langle}
\newcommand{\rip}{\rangle}

\usepackage{enumerate}
\usepackage{amsmath, amssymb}
\usepackage{graphicx}
\usepackage{mathrsfs}
\usepackage{color}

\usepackage[normalem]{ulem}
\usepackage{blkarray}
\usepackage{nicefrac}
\usepackage{ytableau}

\newcommand{\tr}{{\text{Tr}}}

\newcommand{\GL}{\mathrm{GL}}

\usepackage[colorlinks=true, linkcolor=blue, citecolor=blue, urlcolor=blue]{hyperref}
\usepackage[capitalize, nameinlink]{cleveref} 

\usepackage[backend=bibtex,style=alphabetic,sorting=nyt,url=false,doi=false,isbn=false]{biblatex}
\renewbibmacro{in:}{}

\usepackage{hyperref}
\usepackage{tikz}
\usetikzlibrary{arrows.meta, calc, decorations.markings, math}

\title{Free versions of the Strong Szeg\H{o} Limit Theorem}
\author{Michael T. Jury and Lodewyk J. van Rensburg and George Roman}
\address{DEPARTMENT OF MATHEMATICS, UNIVERSITY OF FLORIDA, GAINESVILLE, FL 32611-8105}
\email{mjury@ufl.edu} 
\email{lodewyk.jansenva@ufl.edu}
\email{g.roman@ufl.edu} 
\thanks{MJ partially supported by National Science Foundation Grant DMS-2154494.}
\date{\today}
\begin{document}

\begin{abstract} 
The Strong Szeg\H{o} Limit Theorem is a theorem about the asymptotics of the determinants of large Toeplitz matrices. It can be reformulated as a probabilistic statement about eigenvalue statistics of random unitary matrices. We prove a multivariate generalization of the theorem in this latter form, replacing a single unitary with a system of independent random unitaries. It turns out that the standard proofs of the classical theorem do not generalize to this setting, and instead we must import tools from random matrix theory, free probability, and noncommutative function theory. In addition, we obtain results about the averaged determinants of random unitary pencils, as the size of the unitaries tends to infinity; and an auxiliary result giving a formula for the spectral radius of a matricial sum of free Haar unitaries. 
\end{abstract}
\maketitle 

\section{Introduction}

The purpose of this paper is to formulate and prove certain ``free" multivariable versions of the Strong Szeg\H{o} Limit Theorem (SSLT), in its probabilistic form. The classical SSLT \cite{Szego1952} is a theorem about the asymptotic behavior of the determinants of large Toeplitz matrices, which can be recast as a statement about eigenvalues of functions of large random unitary matrices. Broadly, our goal will be to prove versions of these statements that hold for functions of several independent unitaries. To be able to state our results precisely, we first describe the probabilistic point of view on the classical theorem. 

If $(c_n)_{n\in\mathbb Z}$ is the sequence of Fourier coefficients of an $L^1$ function on the unit circle $\mathbb T$, one can form the Toeplitz matrix 
\[
T_N(f)=
\begin{bmatrix}
c_0      & c_{-1}   & c_{-2}   & \cdots & c_{-(N-1)} \\
c_{1}   & c_{0}   & c_{-1}   & \cdots & c_{-(N-2)} \\
c_{2}   & c_{1}  & c_{0}   & \cdots & c_{-(N-3)} \\
\vdots   & \vdots  & \vdots  & \ddots & \vdots \\
c_{N-1} & c_{N-2} & c_{N-3} & \cdots & c_0
\end{bmatrix}.
\]
We put $D_N(f)=\det T_N(f)$. Then the simplest form of the classical SSLT is
\begin{theorem}[Classical SSLT, determinant form] Let $\phi$ be a sufficiently smooth\footnote{We will not be concerned with the optimal form of the classical theorem in this paper, since we will need relatively strong smoothness assumptions for our ``free" version; in fact in the classical version, for real $\phi$,  it suffices to assume only $\phi,\; \exp(\phi)\in L^1$. See \cite[Chapter 6]{siomn-opuc} and \cite[Chapter 10]{bottcher-silbermann-book} for precise statements and a history of the various proofs.} continuous function on the unit circle. Let $(c_n)_{n\in \mathbb Z}$ be the Fourier coefficients of $\phi$, and suppose that $\sum_{n\in\mathbb Z} |c_n|^2|n|<\infty$. Then
\[
D_N(e^\phi)\sim E_0^N \cdot E_1
\]
where
\[
E_0 =\exp(c_0), \quad E_1= \exp\left(\frac12 \sum_{n\in\mathbb Z} c_{n}c_{-n} |n|\right).
\]
\end{theorem}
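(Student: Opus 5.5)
Via the Heine identity, $D_N(e^\phi)$ becomes a CUE average of a linear statistic, so the theorem reduces to computing the Laplace transform of that statistic; the limit of the fluctuating part is then a Gaussian integral in the independent complex Gaussians that the traces $\tr(U^n)$ approach. The steps are as follows.

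First, by Andreief's identity, for Haar-distributed $U\in U(N)$ with eigenvalues $e^{i\theta_1},\dots,e^{i\theta_N}$,
\[
D_N(e^{\phi}) = \E\Big[\prod_{j=1}^N e^{\phi(e^{i\theta_j})}\Big] = \E\big[\exp(\tr\,\phi(U))\big].
\]
Writing $\phi=\sum_n c_n z^n$ gives $\tr\,\phi(U) = Nc_0 + \sum_{n\neq 0} c_n \tr(U^n)$. The factor $E_0^N=e^{Nc_0}$ therefore splits off, and it remains to show
\[
\E\exp\Big(\sum_{n\ne0} c_n\tr(U^n)\Big)\to \exp\Big(\tfrac12\sum_{n}|n|c_nc_{-n}\Big).
\]

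Next, I would treat the case of a trigonometric polynomial $\phi$ of degree $m$. The key input is the Diaconis--Shahshahani moment identity. For $N\ge \sum_n n(a_n+b_n)$, the mixed moments $\E\prod_{n=1}^m \tr(U^n)^{a_n}\overline{\tr(U^n)}^{b_n}$ coincide exactly with those of $\prod_n(\sqrt n Z_n)^{a_n}(\sqrt n\bar Z_n)^{b_n}$, where the $Z_n$ are i.i.d. standard complex Gaussians. This identity comes from Schur orthogonality together with the character expansion of power sums.

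Expanding the exponential as a power series, for each fixed truncation degree the moments agree once $N$ is large. To interchange the limit and the sum, I need a uniform domination. I would use $|\E e^{X}|\le \E e^{\mathrm{Re}X}$ and bound the exponential moments of $\mathrm{Re}\sum c_n\tr U^n$ uniformly in $N$. For instance, the moments of $|\tr U^n|^{2k}$ are bounded by the Gaussian ones for all $N$, since the Diaconis--Shahshahani inequality holds with $\le$ for every $N$. Given this, dominated convergence yields the limit
\[
\E\exp\Big(\sum_{n\ge1}\sqrt n(c_nZ_n + c_{-n}\bar Z_n)\Big)=\exp\Big(\sum_{n\ge1} n\,c_nc_{-n}\Big),
\]
which follows from the identity $\E e^{aZ+b\bar Z}=e^{ab}$ for a standard complex Gaussian. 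This equals $E_1$.

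Finally, I would pass from trigonometric polynomials to general $\phi$ with $\sum |n||c_n|^2<\infty$ by approximation. Here I expect the main obstacle: one needs bounds on $\E\exp(\tr\,\psi(U))$, uniform in $N$, for the tail $\psi=\phi-\phi_m$, controlled by the $H^{1/2}$ norm $\sum|n||c_n|^2$. The first thing I would try is the variance identity $\E|\tr\psi(U)-Nc_0(\psi)|^2=\sum_n\min(|n|,N)|c_n|^2$, combined with an exponential concentration bound. One option is the Johansson-type inequality $\E e^{\mathrm{Re}\,\tr\psi(U)-N\hat\psi_0}\le \exp(\sum|n||\hat\psi_n|^2)$ for real $\psi$, obtained from the Hilbert-space/Widom argument or log-Sobolev concentration on $U(N)$. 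Hölder's inequality then controls the error uniformly in $N$, since the right-hand side tends to $1$ as $m\to\infty$. Under the strong smoothness assumed here, e.g. $\sum|n||c_n|<\infty$, the crude bound $|\tr U^n|\le N$ together with the moment bounds should already suffice to carry out this step.
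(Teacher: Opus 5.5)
Your argument is correct for the ``sufficiently smooth'' version, but it takes a different route from the paper's. The paper does not prove this classical theorem itself. It only rewrites $D_N(e^\phi)=\mathbb E_N[\exp\tr\phi(U)]$ via the Coulomb gas representation, which is your first step. Its own argument, Theorem~\ref{thm:a_free_szego} at $g=1$, then proves the theorem under $\sum|n||c_n|<\infty$. There the Gaussian limit comes from second-order freeness, which is the asymptotic form of your Diaconis--Shahshahani input. Both the uniform integrability of $e^{G_N}$ and the $N$-uniform truncation estimate come from Meckes--Meckes concentration (Corollary~\ref{cor:com-cor}), since $\tr\phi(U)$ is $\sqrt N\sum|n||c_n|$-Lipschitz.

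You instead use the exact identities for the limit, in the spirit of the Bump--Diaconis proof the paper cites. For domination you use the all-$N$ inequality, valid for $p_\mu(U)=\prod_j(\tr U^j)^{a_j}$:
\[
\mathbb E|p_\mu(U)|^2=\sum_{\lambda\vdash|\mu|,\ \ell(\lambda)\le N}\chi^\lambda(\mu)^2\le z_\mu=\prod_j j^{a_j}a_j! .
\]
Combined with Minkowski in $L^{2k}$, this gives $\mathbb E\bigl|\sum_{|n|>m}c_n\tr U^n\bigr|^{2k}\le k!\bigl(\sum_{|n|>m}\sqrt{|n|}\,|c_n|\bigr)^{2k}$ for every $N$. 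That yields uniform sub-Gaussian control under the weaker hypothesis $\sum\sqrt{|n|}\,|c_n|<\infty$, with no concentration of measure. It supplies both the truncation step and the domination $\sum_k\sup_N\mathbb E|X_N|^k/k!<\infty$ that you actually need in the polynomial case; a bound on $\mathbb E e^{\mathrm{Re}X_N}$ alone does not justify swapping $\lim_N$ with the exponential series.

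The price is that you rely on exact character orthogonality for a single unitary, which fails for several independent unitaries (Remark~\ref{rem:no-bump}). That is precisely why the paper uses second-order freeness plus concentration.

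Two small corrections to your last paragraph. First, it is this moment bound that controls the tail, not $|\tr U^n|\le N$, which is not uniform in $N$. Second, log-Sobolev concentration bounds exponential moments only through the Lipschitz constant $\|\psi'\|_\infty$. For $\psi=\epsilon\cos n\theta$ it gives $e^{C\epsilon^2n^2}$ instead of the true order $e^{\epsilon^2 n/4}$, so it will not yield the $H^{1/2}$ Johansson-type inequality. For the sharp hypothesis you would need Johansson's argument or another genuinely different one.
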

(For positive $a_N, b_N$, we write $a_N\sim b_N$ to mean $\lim \frac{a_N}{b_N}=1$.) The SSLT can be recast in a ``probabilistic" form which makes no mention of Toeplitz matrices, but instead involves averages of functions of a random unitary matrix. This proceeds through the so-called ``Coulomb gas representation" of the Toeplitz determinant. By this we mean the following formula: 
\begin{theorem}[Coulomb gas representation]
    Let $\phi, e^\phi\in L^1(\mathbb T)$. Then 
    \[
    D_N(e^\phi) =\frac{1}{N!} \frac{1}{(2\pi)^{N}}\int_{\mathbb T^N} \exp\left( \sum_{j=1}^N \phi (\theta_j)\right)\left|\prod_{1\leq j<k\leq N} (e^{i\theta_k}-e^{i\theta_j}) \right|^2\, d\theta_1\cdots d\theta_N .
    \]
\end{theorem}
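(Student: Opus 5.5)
The natural route is Heine's identity: write the squared Vandermonde as a product of two Vandermonde determinants and integrate them against the weight. Put $w=e^{\phi}$ and $z_j=e^{i\theta_j}$. For $|z_k|=1$ we have $\overline{z_k}=z_k^{-1}$, so
\[
\left|\prod_{j<k}(z_k-z_j)\right|^2=\det\bigl[z_k^{\,m}\bigr]_{m,k}\;\overline{\det\bigl[z_k^{\,n}\bigr]_{n,k}}=\det\bigl[z_k^{\,m}\bigr]\,\det\bigl[z_k^{-n}\bigr],
\]
where $0\le m,n\le N-1$. The identity $\prod_{j<k}(z_k-z_j)=\det[z_k^{m}]$ is the standard Vandermonde formula. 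The integrand is then $\prod_k w(\theta_k)$ times a product of two determinants. Each product $z_k^{m}z_k^{-n}w(\theta_k)$ is bounded by $|w|$, so the whole integrand is integrable on $\mathbb T^N$ by Fubini, using only $e^\phi\in L^1$. The hypothesis $\phi\in L^1$ is needed only so that $e^\phi$ is well defined a.e.

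Next, expand both determinants by the Leibniz formula over permutations $\sigma,\tau\in S_N$:
\[
\prod_k w(\theta_k)\sum_{\sigma,\tau}\operatorname{sgn}(\sigma)\operatorname{sgn}(\tau)\prod_{k=1}^N z_k^{\sigma(k)-\tau(k)},
\]
with the indices shifted to $\{0,\dots,N-1\}$. Integrating term by term, each term factors over $k$:
\[
\frac1{2\pi}\int_{\mathbb T} e^{i(\sigma(k)-\tau(k))\theta}w(\theta)\,d\theta=c_{\tau(k)-\sigma(k)}.
\]
Here $c_n$ denotes the Fourier coefficients of $w$, so $c_n=\frac1{2\pi}\int w e^{-in\theta}\,d\theta$. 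After dividing by $(2\pi)^N$ this gives
\[
\frac{1}{(2\pi)^N}\int_{\mathbb T^N}\cdots=\sum_{\sigma,\tau}\operatorname{sgn}(\sigma\tau)\prod_{k}c_{\tau(k)-\sigma(k)}.
\]

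Now reindex by $k=\sigma^{-1}(j)$ and set $\pi=\tau\circ\sigma^{-1}$. Then $\operatorname{sgn}(\sigma)\operatorname{sgn}(\tau)=\operatorname{sgn}(\pi)$ and $\prod_j c_{\pi(j)-j}$ depends only on $\pi$. For each fixed $\pi$ there are exactly $N!$ pairs $(\sigma,\tau)$, so the sum equals $N!\sum_\pi\operatorname{sgn}(\pi)\prod_j c_{\pi(j)-j}$. That sum is $N!\det[c_{n-m}]$ or its transpose, which has the same determinant. Since $T_N(f)$ has $(m,n)$ entry $c_{m-n}$, this is $N!\,D_N(e^\phi)$, and dividing by $N!$ gives the formula.

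The only real care point is bookkeeping. One must check the sign convention of the Fourier coefficients against the indexing of $T_N$; the conclusion is insensitive to it because a matrix and its transpose have the same determinant. One must also justify exchanging the finite sum with the integral, which is immediate from Fubini once integrability of each product term is noted as above.
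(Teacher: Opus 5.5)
Your proof is correct and complete. It is the standard proof of Heine's identity: Andr\'eief's identity applied to the two Vandermonde determinants $\det[z_k^{m}]$ and $\det[z_k^{-n}]$. The bookkeeping all checks out:
\begin{itemize}
\item the bound of the integrand by $(N!)^2\prod_k|w(\theta_k)|$ justifies the termwise Tonelli/Fubini factorization using only $e^\phi\in L^1$;
\item the integral $\frac{1}{2\pi}\int e^{i(\sigma(k)-\tau(k))\theta}w\,d\theta=c_{\tau(k)-\sigma(k)}$ is right;
\item the reindexing $\pi=\tau\circ\sigma^{-1}$, with exactly $N!$ pairs per $\pi$, produces $N!\det[c_{n-m}]$;
\item that matrix is the transpose of $T_N(e^\phi)$, so its determinant is $D_N(e^\phi)$.
\end{itemize}

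The paper gives no proof of this statement. It quotes it as a classical fact and uses it only in combination with the Weyl integration formula, to rewrite $D_N(e^\phi)$ as $\mathbb E_N[\exp(\mathrm{Tr}\,\phi(U))]$. So there is no competing argument in the paper to compare against.
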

The integral on the right can be interpreted as an instance of the Weyl integration formula for integration with respect to Haar measure on the unitary group $\mathcal U(N)$. The Weyl integration formula says that if $\Phi:\mathcal U(N)\to \mathbb C$ is an integrable function which depends only on the eigenvalues of $U$, then its integral (expectation) with respect to Haar measure is 
\begin{equation}\label{eqn:weyl-integration}
\mathbb E_{N} [\Phi(U)] = \frac{1}{N!} \frac{1}{(2\pi)^{N}}\int_{\mathbb T^{N}} \Phi(\theta_1, \dots, \theta_N)\left|\prod_{1\leq j<k\leq N} (e^{i\theta_k}-e^{i\theta_j}) \right|^2\, d\theta_1\cdots d\theta_N .
\end{equation}
In particular, any  continuous function $\phi$ on the unit circle induces a function on unitary matrices via the trace $U\to \tr \phi(U)$, where $\phi(U)$ is defined by the spectral theorem; concretely this function is the sum $\sum \phi(\lambda_j)$ obtained by evaluating $\phi$ at the eigenvalues $\lambda_j=e^{i\theta_j}$ of the unitary matrix $U$. If we apply Weyl integration formula \ref{eqn:weyl-integration} to the function $\Phi(U)= \exp[\tr\, \phi(U)]$, we see that the Coulomb gas representation can be re-expressed in the form
\[
D_N(e^\phi) = \mathbb E_{N}[\exp (\tr \,\phi(U))].
\]
Thus we have
\begin{theorem}[Classical SSLT, probabilistic form]\label{thm:classic-sslt-prob} Let $\phi$ be a sufficiently smooth continuous function on the circle whose Fourier coefficients satisfy $\sum_{n\in\mathbb Z}|c_n|^2|n|<\infty.$ Then 
\begin{equation}\label{eqn:classical-sslt-prob}
\mathbb E_N[\exp( \tr\, \phi(U))]\sim E_0^N\cdot E_1 
\end{equation}
with $E_0$ and $E_1$ as above.
\end{theorem}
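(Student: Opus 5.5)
Since the paper has already identified $D_N(e^\phi)$ with $\mathbb E_N[\exp(\tr\,\phi(U))]$ through the Coulomb gas representation and the Weyl integration formula \eqref{eqn:weyl-integration}, the most economical route is to argue directly with the random matrix $U$. The plan is to show that the centered linear statistic $X_N=\tr\,\phi(U)-Nc_0$ converges in a strong, exponential-moment sense to a Gaussian variable. The exponential of its variance will then be $E_1$.

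First I would reduce to trigonometric polynomials $\phi=\sum_{|n|\le M} c_n e^{in\theta}$. Then
\[
\tr\,\phi(U)=Nc_0+\sum_{1\le |n|\le M} c_n\,\tr(U^n).
\]
The key input is the Diaconis--Shahshahani moment identity. For $N\ge \sum_j j(a_j+b_j)$, the joint mixed moments $\mathbb E_N\bigl[\prod_j (\tr U^j)^{a_j}\overline{(\tr U^j)}^{b_j}\bigr]$ coincide exactly with those of $\bigl(\sqrt{j}Z_j\bigr)_j$, where the $Z_j$ are independent standard complex Gaussians. This is proved by expanding in Schur functions and using the orthogonality of irreducible characters on $\mathcal U(N)$.

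Write $G=\sum_{n\ge1}\bigl(c_n\sqrt n Z_n+c_{-n}\sqrt n\,\overline{Z_n}\bigr)$. Then
\[
\mathbb E[e^{G}]=\exp\Bigl(\sum_{n\ge1} n c_nc_{-n}\Bigr)=\exp\Bigl(\tfrac12\sum_{n\in\mathbb Z}|n|c_nc_{-n}\Bigr)=E_1 .
\]
It therefore suffices to show $\mathbb E_N[e^{X_N}]\to \mathbb E[e^G]$.

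To do this, I would expand $e^{X_N}=\sum_k X_N^k/k!$. The moments $\mathbb E_N[X_N^k]$ agree with $\mathbb E[G^k]$ for $k\le N/M$. For the tail $k>N/M$, I would use a uniform bound such as $\mathbb E_N|X_N|^{k}\le \mathbb E|\tilde G|^k$ for a suitable Gaussian majorant, or else the cruder estimate $|X_N|\le 2N\|\phi\|_\infty$. Together with the exactness of moments up to order $N/M$, this makes the tail sum tend to zero. Establishing this uniform tail bound is the main obstacle. The crude bound yields terms like $(2N\|\phi\|_\infty)^k/k!$ with $k>N/M$, which are not summably small. I would therefore instead use the exact identity
\[
\mathbb E_N\bigl|\tr U^j\bigr|^{2m}\le \mathbb E\,|\sqrt j Z_j|^{2m}=j^m m!,
\]
valid for all $N$, which follows from the Schur expansion, since the Gaussian moment counts permutations and the $\mathcal U(N)$ moment counts only those with restricted length. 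Combined with Hölder's inequality, this gives sub-Gaussian moment bounds uniform in $N$, and hence dominated convergence of the series.

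Finally I would pass from trigonometric polynomials to general $\phi$ with $\sum|n||c_n|^2<\infty$ (real $\phi$, say) by approximation. Write $\phi=\phi_M+r_M$. By Cauchy--Schwarz, $\mathbb E_N[e^{X_N(\phi)}]$ is controlled by the quantities $\mathbb E_N[e^{2X_N(\phi_M)}]$ and $\mathbb E_N[e^{2X_N(r_M)}]$. The second tends to $1$ uniformly in $N$ as $M\to\infty$, by Johansson's variance-type bound
\[
\log\mathbb E_N[e^{X_N(r)}]\le \tfrac12\sum|n||\hat r_n|^2 ,
\]
which can itself be obtained from the polynomial case plus Fatou's lemma. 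An $\varepsilon/3$ argument then yields \eqref{eqn:classical-sslt-prob}.
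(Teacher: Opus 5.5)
The trigonometric-polynomial part is sound. Exact Diaconis--Shahshahani moment matching up to order $N/M$, together with the $N$-uniform bound $\mathbb E_N|\tr U^j|^{2m}\le j^m m!$ and Minkowski's inequality, does control the tail of the exponential series. This is a genuinely one-variable route. The paper does not prove the statement itself: it takes the classical determinant form and transports it through the Weyl integration formula. Its own $g=1$ argument, Theorem~\ref{thm:a_free_szego}, replaces exact moments by second order freeness and concentration of measure, at the cost of assuming $\sum|n||c_n|<\infty$.

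\textbf{The gap is in your last step.} To make $\mathbb E_N(e^{X_N(r_M)}-1)^2$ small uniformly in $N$ under only $\sum|n||c_n|^2<\infty$, you need an exponential moment bound that holds at every finite $N$ and is controlled by $\sum|n||\hat r_n|^2$. Your polynomial case does not supply this.
\begin{itemize}
\item It identifies only the limit $\lim_N \mathbb E_N e^{X_N(r)}$.
\item The $N$-uniform bounds you actually proved are in terms of $\sum|\hat r_n|\sqrt{|n|}$. Minkowski discards the orthogonality between different $\tr U^j$. That quantity need not be finite under your hypothesis (take $c_n=1/(n\log n)$), let alone tend to zero for the tails $r_M$.
\item Fatou's lemma can transfer a fixed-$N$ inequality from polynomials to general $r$. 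It cannot turn a statement about $N\to\infty$ into an inequality valid at each $N$.
\end{itemize}
So ``Johansson's bound from the polynomial case plus Fatou'' is unjustified as written.

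\textbf{The missing ingredient is monotonicity in $N$.} You get it by applying your Schur-function domination to the whole exponential rather than trace by trace. For a real trigonometric polynomial $r$ with $\hat r_0=0$, set $Y=\sum_{n\ge1}\hat r_n p_n$. Then $e^{X_N(r)}=|e^{Y}(U)|^2$. Writing $e^{Y}=\sum_\lambda a_\lambda s_\lambda$ gives $\mathbb E_N e^{X_N(r)}=\sum_{\ell(\lambda)\le N}|a_\lambda|^2$, which increases in $N$ to $\exp\bigl(\sum_{n\ge1}n|\hat r_n|^2\bigr)$. That is Johansson's inequality for every $N$. Your Fatou and Cauchy--Schwarz steps then go through. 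Also use $\mathbb E_N e^{X_N(r_M)}\ge 1$, by Jensen, to get $\mathbb E_N(e^{X_N(r_M)}-1)^2\le \mathbb E_N e^{2X_N(r_M)}-1$.

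This is exactly the Bump--Diaconis argument, and it proves the theorem outright by monotone convergence, which makes your moment-matching step redundant. Alternatively, you can accept a smoothness hypothesis such as $\sum|c_n|\sqrt{|n|}<\infty$, which is compatible with ``sufficiently smooth''. Then your own Minkowski bounds already give $\sup_N\mathbb E_N(e^{|X_N(r_M)|}-1)^2\to 0$, and Johansson's bound is not needed.
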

The theorem, in this probabilistic form, has an elegant proof due to Bump and Diaconis \cite{bump-diaconis-2002} (under the additional smoothness assumption $\sum_{n\in\mathbb Z}|c_n|<\infty$), which exploits the representation theory of the unitary group to give a combinatorial, rather than analytic, expression for the expectation. From this the asymptotic behavior can be deduced. 

Pushing a little further, for real $\phi$ let $F_N$ denote the random variable $\tr\, \phi(U^{})$ (at unitaries of size $N$). We then have $\mathbb E_N[F_N]= Nc_0$, and if we replace the function $\phi$ by $t\phi$ for a real number $t$ (so that $c_n$ is replaced by $tc_n$), we deduce from \eqref{eqn:classical-sslt-prob} that
\begin{equation}\label{eqn:classical-fluctuation}
\lim_{N\to \infty} \left(\log \mathbb E_N[\exp tF_N] -t\mathbb E_N F_N\right) =\frac12 \left(\sum_{n\in\mathbb Z} |c_n|^2 |n| \right)t^2
\end{equation}
for all real $t$. By a standard application of the method of moments in probability theory, this implies that the recentered random variables $F_N-\mathbb E_NF_N$ converge in distribution to a Gaussian random variable with mean 0 and variance $\sum_{n\in\mathbb Z} |c_n|^2 |n|$. In this way, the SSLT implies a ``fluctuation theorem" for the random variables $F_N=\tr \phi(U)$. (The functions $t\to \mathbb E \exp(tF_N)$, $t\to \log \mathbb E  \exp(tF_N)$ are the {\em moment} and {\em cumulant generating functions} of $F_N$, respectively.)

Before we state our multivariate version of the theorem, it will be beneficial to highlight an illuminating special case. This occurs when we start with a strictly positive trigonometric polynomial $p(e^{i\theta})$ of degree $m$, and put $\phi=\log p$. By the Fej\'er-Riesz theorem, such a polynomial can be factored as $p=|q|^2$ for some analytic polynomial $q(z) =\sum_{n=0}^m a_n z^n$, in such a way that $q$ has no zeroes in the closed disk $|z|\leq 1$.  (Such a polynomial is called {\em stable}.)  In this case, we have
\[
\exp \tr \,\phi(U) =\exp \tr\, \log p(U) =|\det q(U)|^2.  
\]
For convenience, we will assume that $p$ is normalized so that $q(0)=1$. (This has the effect of setting the $c_0$ Fourier coefficient of $\log p$ equal to 0.) Then we can factor $q(z)=\prod_{j=0}^m (1+\alpha_jz)$ with each $|\alpha_j|<1$. If we let $A$ be any $m\times m$ matrix whose eigenvalues are these $\{\alpha_j\}$, and if $U$ is an $N\times N$ unitary matrix with eigenvalues $\{ \lambda_i\}$, we see that
\begin{equation}\label{eqn:intro-det-rep}
\det q(U) = \det \prod_{j=1}^m (I+\alpha_j U) = \prod_{j=1}^m\prod_{i=1}^N (1+\alpha_j \lambda_i) = \det (I+A\otimes U).
\end{equation}
where $\otimes$ is the usual Kronecker tensor product. (We refer to this identity as a {\em determinantal representation} for the stable polynomial $q$, and observe that it holds for arbitrary matrices, not just unitaries.) The Fourier coefficients of the function $\log p=\log|q|^2$ are
\[
c_n =(-1)^n\frac{\tr A^n}{n} \ (n>0), \quad c_{-n}=\overline{c_n}, \quad c_0=0;
\]
so that 
\[
\frac12 \sum_{n\in\mathbb Z} |c_n|^2 |n| = \sum_{n=1}^\infty \frac{|\tr(A^n)|^2}{n^2} n =\sum_{n=1}^\infty \frac{\tr (A\otimes\overline{A})^n}{n} = \tr \log (I-A\otimes \overline{A})^{-1}
\]
(here $\overline{A}$ denotes the entrywise complex conjugate of $A$).

So, applying the SSLT we find
\[
\lim_{N\to \infty} \mathbb E_N|\det(I+A\otimes U)|^2 = \exp\left( \tr \log (I-A\otimes \overline{A})^{-1}\right) = \det (I-A\otimes \overline{A})^{-1}.
\]
(The above manipulations are made rigorous using the fact that the spectral radius of $A$ is strictly less than $1$.) 
The argument can be extended to give a ``polarized" form:
\begin{corollary}\label{cor:intro-pencil}
    Let $A, B$ be square matrices with spectral radius $\rho(A), \rho(B)<1$. Then
\[
\lim_{N\to \infty} \mathbb E_N[\det(I+A\otimes U)\overline{\det(I+B\otimes U)]} =\det (I-A\otimes \overline{B})^{-1}.
\]    
\end{corollary}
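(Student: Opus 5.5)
Write $\det(I+A\otimes U)=\det q_A(U)$ and $\det(I+B\otimes U)=\det q_B(U)$, where $q_A(z)=\det(I+zA)=\prod_j(1+\alpha_j z)$ and $q_B(z)=\prod_k(1+\beta_k z)$, with $\alpha_j,\beta_k$ the eigenvalues of $A,B$. By the identity \eqref{eqn:intro-det-rep}, the expectation depends only on these eigenvalues. So we may reduce to scalar stable polynomials $q_A,q_B$ with $q(0)=1$ and no zeros in $|z|\le 1$, which holds because $\rho(A),\rho(B)<1$. Then
\[
\det q_A(U)\,\overline{\det q_B(U)}=\exp\bigl(\tr\,\phi(U)\bigr),\qquad \phi(e^{i\theta})=\log q_A(e^{i\theta})+\overline{\log q_B(e^{i\theta})},
\]
where $\log q_A$ and $\log q_B$ are the principal branches. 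These are analytic on a disk of radius $r>1$ and vanish at $0$. Hence $\phi$ is real-analytic on the circle, its Fourier coefficients decay geometrically, and $\phi$ is a legitimate complex-valued smooth symbol.

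Next I compute the Fourier coefficients. From $\log(1+\alpha z)=\sum_{n\ge1}(-1)^{n+1}\alpha^n z^n/n$ we get, for $n>0$,
\[
c_n=(-1)^{n+1}\frac{\tr A^n}{n},\qquad c_{-n}=(-1)^{n+1}\frac{\overline{\tr B^n}}{n},\qquad c_0=0.
\]
This gives $E_0=1$ and
\[
\tfrac12\sum_n c_nc_{-n}|n|=\sum_{n\ge1}\frac{\tr A^n\,\overline{\tr B^n}}{n}=\sum_{n\ge1}\frac{\tr (A\otimes\overline B)^n}{n}=\tr\log(I-A\otimes\overline B)^{-1}.
\]
The series converges because $\rho(A\otimes\overline B)=\rho(A)\rho(B)<1$. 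Thus $E_1=\det(I-A\otimes\overline B)^{-1}$, which is the claimed limit.

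The main obstacle is that Theorem~\ref{thm:classic-sslt-prob} is stated for a (real) smooth $\phi$, whereas here $\phi$ is complex-valued, and the limit $E_1$ may be complex or even zero-free only in a weak sense. So "$\sim$" must be read as convergence of the expectations to $E_1$. I would handle this by polarization and analyticity.

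Replace $A,B$ by $zA,wB$ with $|z|,|w|<1/\max(\rho(A),\rho(B))$ suitably small. For each $N$, the function $F_N(z,\bar w)=\mathbb E_N[\det(I+zA\otimes U)\overline{\det(I+wB\otimes U)}]$ is a polynomial in $z$ and $\bar w$, analytic in $(z,\bar w)$ on a bidisk. When $B=A$ and $w=z$, it is the quantity covered by the real case (via $p=|q|^2$). More generally, take the positive definite case with the block matrix $C=A\oplus B$ and combinations $q_A+\lambda q_B$ is not a determinantal product, so instead I would argue directly.

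Either invoke the classical SSLT for complex symbols with summable $|n||c_n|^2$ (Bump–Diaconis' proof works verbatim for complex coefficients with $\sum|c_n|<\infty$), or use a normal-families argument. For the latter, the $F_N$ are uniformly bounded on compact subsets of the bidisk by Cauchy–Schwarz together with the real case $\mathbb E_N|\det(I+zA\otimes U)|^2\to\det(I-|z|^2A\otimes\overline A)^{-1}$. Their Taylor coefficients in $(z,\bar w)$ are mixed moments $\mathbb E_N[\tr\wedge^k(A\otimes U)\,\overline{\tr\wedge^l(B\otimes U)}]$. For fixed $k,l$ these stabilize for $N\ge k+l$ by Haar moment (Diaconis–Shahshahani) identities, and the limits match the coefficients of $\det(I-zA\otimes\overline{wB})^{-1}$.

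Vitali's theorem then gives convergence on the whole domain, including $z=w=1$.
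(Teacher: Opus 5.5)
Your proposal is correct and follows essentially the paper's route. The Fourier-coefficient computation, with $c_n$ coming from $A$ and $c_{-n}$ from $\overline{B}$, is exactly the polarization the paper has in mind, and your option (a) settles the complex-symbol issue — most directly via the paper's Theorem~\ref{thm:a_free_szego} with $g=1$, which allows complex coefficients and whose hypothesis $\sum_n |c_n||n|<\infty$ holds here because $|\tr A^n|\le k\rho(A)^n$. Your reduction to eigenvalues (equivalently, replacing $A,B$ by diagonal matrices of norms $\rho(A),\rho(B)<1$) plays the role of the paper's appeal to Rota's similarity theorem, so the normal-families alternative is unnecessary; as written it would also still need a proof of the limiting coefficient identity and of the locally uniform bound.
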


Our goal in this paper is to prove versions of Theorem~\ref{thm:classic-sslt-prob} and Corollary~\ref{cor:intro-pencil} where we replace the single unitary matrix $U$ with a finite set of independent copies $U_1, \dots, U_g$. In the case of Theorem~\ref{thm:classic-sslt-prob} this will involve replacing $\phi$ with certain functions of noncommuting variables, and in Corollary~\ref{cor:intro-pencil} we replace $A\otimes U$ with a sum $\sum_{j=1}^g A_j\otimes U_j$.

To state our versions of Theorem~\ref{thm:classic-sslt-prob} and Corollary~\ref{cor:intro-pencil}, we need a little bit of notation. We let $\mathbb F_g$ denote the free group on $g$ generators, $\{\gamma_1, \dots, \gamma_g\}$. Elements of $\mathbb F_g$ correspond to reduced words in the letters $\{\gamma_1, \dots, \gamma_g\}$ and their inverses. The {\em length} of a reduced word is the number of letters occuring in it, denoted $|w|$. Any system of unitary operators $(U_1, \dots, U_g)$ acting in a Hilbert space defines a representation of $\mathbb F_g$ sending $\gamma_i$ to $U_i$, we let $U^w$ denote the image of the word $w$ under this representation. We let $\tr$ denote the usual (unnormalized) trace on matrices. Our first ``free" version of the SSLT is then

\begin{theorem}[``Easy" free SSLT]\label{thm:free-sslt-prob} Let $c:\mathbb F_g\to \mathbb C$ be an {\em  admissible} function satisfying $\sum_{w\in\mathbb F_g} |c_w||w|<\infty$. Let $U_1, \dots U_g$ denote independent $N\times N$ Haar distributed random unitary matrices. Then
\[
\mathbb E_N \left[ \exp \left( \tr \sum_{w\in\mathbb F_g} c_w U^w \right) \right] \sim E_0^N \cdot E_1
\]
where 
\[
E_0=\exp(c_\varnothing), \quad E_1=\exp\left( \frac12 \sum_{w\in\mathbb F_g} c_w c_{w^{-1}} |w|\right).
\]
\end{theorem}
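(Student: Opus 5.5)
The plan is to deduce the statement from a central limit theorem for traces of words, plus a uniform exponential integrability bound that upgrades convergence in distribution to convergence of exponential moments. Write
\[
F_N=\tr \sum_{w\in\mathbb F_g} c_w U^w = Nc_\varnothing + X_N,\qquad X_N=\sum_{w\neq\varnothing} c_w \tr U^w .
\]
Since $E_0^N=\exp(Nc_\varnothing)$, it suffices to show $\mathbb E_N[\exp X_N]\to E_1$. Throughout I use admissibility, whose definition is not reproduced above. I read it as exactly the hypothesis that makes $X_N$ well behaved: $c_{w^{-1}}=\overline{c_w}$ (so $X_N$ is real), together with the normalization under which conjugate words are accounted for consistently, so that the pairing $\sum_w c_wc_{w^{-1}}|w|$ is the limiting variance. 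If the actual definition differs, the second step below is where that difference would enter.

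\textbf{Step 1: truncation and finite-dimensional CLT.} Fix a finite set $S\subset\mathbb F_g\setminus\{\varnothing\}$ and set $X_N^S=\sum_{w\in S}c_w\tr U^w$. I would invoke the known second-order asymptotics for independent Haar unitaries (Mingo--Speicher second-order freeness, or equivalently the Weingarten/genus expansion). These give the following facts for the family $(\tr U^w)_{w\neq\varnothing}$:
\begin{itemize}
\item the means are $O(1/N)$;
\item the covariances $\mathbb E[\tr U^w\,\tr U^{v}]$ converge to the number of cyclic rotations identifying the cyclic reduction of $v$ with that of $w^{-1}$;
\item all joint cumulants of order $\ge 3$ tend to $0$.
\end{itemize}
Hence $X_N^S$ converges in distribution to a centered Gaussian. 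After summing over $S$ and regrouping conjugacy classes using admissibility, its variance is $\sigma_S^2=\sum_{w\in S}c_wc_{w^{-1}}|w|$. In particular $\mathbb E\exp(X_N^S)\to\exp(\sigma_S^2/2)$, provided we have uniform integrability.

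\textbf{Step 2: uniform sub-Gaussian bounds.} This is the step I expect to be the main obstacle. I would use concentration of measure on $\mathcal U(N)^g$. To avoid the flat direction of the center, write $U_j=e^{i\theta_j}V_j$ with $V_j\in SU(N)$; the Ricci curvature of $SU(N)$ is of order $N$, so Gromov--Milman/Bakry--\'Emery applies. The map $(U_j)\mapsto \tr U^w$ is Lipschitz in Hilbert--Schmidt norm with constant at most $|w|\sqrt N$, because $U^w$ is a product of $|w|$ unitaries. Therefore $X_N$ is Lipschitz with constant $L\le\sqrt N\sum_w|c_w||w|$. Concentration then gives
\[
\mathbb P\big(|X_N-\mathbb E X_N|>t\big)\le 2e^{-cNt^2/L^2}=2e^{-c't^2/K^2},\qquad K=\sum_w|c_w||w|,
\]
uniformly in $N$. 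The same bound holds for the tail $X_N-X_N^S$, with $K$ replaced by $K_S=\sum_{w\notin S}|c_w||w|$, which is small.

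For the central circle factor, the traces $\tr U^w$ pick up phases $e^{i(\text{exponent sum})\theta_j}$. I would handle this by averaging separately over $\theta_j$ and $V_j$. Alternatively, one can note that the variance contribution of the circle factor is $O(1)$ and is already encoded in the Haar computation.

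One also needs $\sup_N|\mathbb E X_N|$ bounded and $\mathbb E X_N\to 0$. For this I would bound $|\mathbb E\tr U^w|\le C$ uniformly, which is trivial for power words with exponent $<N$ and follows for general words from Weingarten estimates, and then use dominated convergence in $w$.

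\textbf{Step 3: conclusion.} By Step 2 and H\"older's inequality,
\[
\big|\mathbb E e^{X_N}-\mathbb E e^{X_N^S}\big|\le \big(\mathbb E e^{2X_N^S}\big)^{1/2}\big(\mathbb E|e^{X_N-X_N^S}-1|^2\big)^{1/2}.
\]
The first factor is bounded uniformly in $N$, and the second is $O(K_S)$ uniformly in $N$. By Step 1 together with the uniform integrability from Step 2, $\mathbb E e^{X_N^S}\to e^{\sigma_S^2/2}$. Letting $S\uparrow\mathbb F_g\setminus\{\varnothing\}$, we have $\sigma_S^2\to\sum_w c_wc_{w^{-1}}|w|$, and this sum converges since $|c_wc_{w^{-1}}||w|\le \sup|c|\cdot|c_w||w|$. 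This gives $\mathbb E_N[\exp F_N]\sim E_0^N E_1$.
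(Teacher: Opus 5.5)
Your architecture is the paper's: a second-order-freeness CLT for finite truncations, concentration of measure with Lipschitz constant $\sqrt N\sum_w|c_w||w|$ giving sub-Gaussian tails uniform in $N$ (hence uniform integrability of the exponentials), and a Cauchy--Schwarz truncation estimate controlled by $\sum_{|w|>M}|c_w||w|$.

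\textbf{The gap.} It is in Step~1, at the one place where the constant $E_1$ actually enters: the claim that the limiting variance of $X_N^S$ is $\sum_{w\in S}c_wc_{w^{-1}}|w|$.

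\emph{What admissibility actually is.} In the paper, admissibility (Definition~\ref{def:admissible}) means that $c$ is supported on cyclically reduced words and is constant on cyclic-rotation orbits. It does not include $c_{w^{-1}}=\overline{c_w}$.

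\emph{The missing computation.} With exactly these two properties, one combines the limit $\lim_N\mathbb E[\tr U^w\,\tr U^v]=j(w)\mathbf 1_{\{v\sim w^{-1}\}}$ for nonempty cyclically reduced $w,v$ with orbit--stabilizer, $|\mathrm{orb}(w^{-1})|=|w|/j(w)$. This gives $\sum_w\sum_{v\sim w^{-1}}c_wc_v\,j(w)=\sum_w c_wc_{w^{-1}}|w|$ (Lemma~\ref{lem:second-order-moments-comp}). This is what your ``regrouping'' must be, and it is not carried out.

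\emph{Your reading gives a false identity.} Take $c_{\gamma_1\gamma_2}=c_{\gamma_2^{-1}\gamma_1^{-1}}=1$ and $c=0$ otherwise. This satisfies $c_{w^{-1}}=\overline{c_w}$ and gives $X_N=2\Re\,\tr(U_1U_2)$, whose limiting variance is $2$. Hence $\mathbb E e^{X_N}\to e$, while $E_1$ would be $e^2$. Words that are not cyclically reduced break it too: $\tr U^{\gamma_1\gamma_2\gamma_1^{-1}}=\tr U_2$, but $|\gamma_1\gamma_2\gamma_1^{-1}|=3$.

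\emph{Consequence for $S$.} For the same reason, $S$ cannot be an arbitrary finite set. It must be a union of rotation orbits, and closed under inversion for your formula for $\sigma_S^2$ to hold as written; e.g.\ $S=\{0<|w|\le M\}$, as in the paper. This dependence sits in Step~1, not Step~2 as you suggest; Step~2 uses only $\sum_w|c_w||w|<\infty$.

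\textbf{Smaller points.}

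\emph{Complex case.} Since conjugate symmetry is not assumed, $X_N$ is complex. The argument survives with three adjustments: the limit is a centered complex Gaussian $Z$ satisfying $\mathbb E e^{Z}=e^{\mathbb E Z^2/2}$; concentration is applied to $\Re X_N$ and $\Im X_N$ separately (Corollary~\ref{cor:com-cor}); and the H\"older step uses $\mathbb E|e^{2X_N^S}|$.

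\emph{Concentration.} Your $SU(N)$-plus-circle sketch leaves the circle direction open: for $\theta_j$ uniform on $[0,2\pi)$, the map $\theta_j\mapsto e^{i\theta_j}V_j$ has Hilbert--Schmidt Lipschitz constant $\sqrt N$. Simply cite the Meckes--Meckes bound on $\mathcal U(N)^g$ (Theorem~\ref{thm:concentration-of-measure}), as the paper does.

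\emph{Means.} You are right that $\mathbb E\,\tr U^w$ need not vanish for nonempty cyclically reduced $w$; e.g.\ $\mathbb E\,\tr(U_1U_2U_1^*U_2^*)=\mathbb E|\tr U_2|^2/N=1/N$. The paper's proof glosses over this when it takes $\mathbb E G_N=Nc_\varnothing$ and treats $G_N$ and $T_{N,M}$ as centered. However, your bound $\sup_{N,w}|\mathbb E\,\tr U^w|<\infty$ is only asserted. Fixed-degree Weingarten asymptotics give $O(1/N)$ for each fixed $w$, but not uniformly over words of length comparable to or exceeding $N$. That uniformity is exactly what is needed to make $\sup_N|\mathbb E(X_N-X_N^S)|$ small.
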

(We have not yet defined an {\em admissible} function on $\mathbb F_g$, but this is only a technical restriction on the coefficients $c_w$, which can always be imposed without loss of generality.  See Definition~\ref{def:admissible} below, and the remark following it.) Just as above we obtain a fluctuation theorem as an immediate corollary, see Corollary~\ref{cor:free-fluctuation}. The quantity $ \sum_{w\in\mathbb F_g} c_w c_{w^{-1}} |w|$ is evidently the natural analog of the one appearing in the classical SSLT (where $g$ is $1$ and the group is $\mathbb Z$). In discussions of the classical theorem, it is often remarked that this quantity appears rather mysterious at first, so it is notable that this sum (viewed as a bilinear form on the coefficients $c_w$) emerges naturally from the free probability point of view. 

Our second theorem extends Corollary~\ref{cor:intro-pencil} to several variables. We will replace the expression $I+A\otimes U$ with a {\em monic linear pencil} in the random matrices $U_j$, that is, an expression of the form
\[
L_A(U):= I_k\otimes I_N +\sum_{j=1}^g A_j \otimes U_j.
\]
where $A_1, \dots, A_g$ are fixed $k\times k$ matrices. We let $\rho(T)$ denote the spectral radius of a matrix $T$. The following is our multivariable version of Corollary~\ref{cor:intro-pencil}, which fully proves the conjecture left open in \cite{jury2025determinantsrandomunitarypencils}.

\begin{theorem}\label{thm:intro-free-pencil} Let $A_1, \dots, A_g$, $B_1, \dots , B_g$ be systems of square matrices of size $k\times k$, $\ell \times \ell$ respectively.  Suppose that
\begin{equation}\label{eqn:intro-spectral-radius-assumption}
\rho\left(\sum_{j=1}^g A_j\otimes \overline{A_j}\right)<1, \quad \rho\left(\sum_{j=1}^g B_j\otimes \overline{B_j}\right)<1, 
\end{equation}
Then\[
\lim_{N\to \infty} \mathbb E_N \left[ \det L_A(U) \overline{L_B(U)}\right] = \det\left(I_k\otimes I_\ell - \sum_{j=1}^g A_j\otimes \overline{B_j}\right)^{-1}. 
\]
\end{theorem}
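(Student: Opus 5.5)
Write $X=\sum_j A_j\otimes U_j$ and $Y=\sum_j B_j\otimes U_j$, so that $L_A(U)=I+X$ and $L_B(U)=I+Y$ (the statement intends $\overline{\det L_B(U)}$). Under \eqref{eqn:intro-spectral-radius-assumption} the plan is to expand both determinants as exponentials of traces of logarithms and reduce to Theorem~\ref{thm:free-sslt-prob}. Formally,
\[
\det(I+X)=\exp\Big(\sum_{n\ge1}\frac{(-1)^{n+1}}{n}\tr X^n\Big),
\qquad
\tr X^n=\sum_{j_1,\dots,j_n}\tr(A_{j_1}\cdots A_{j_n})\,\tr(U_{j_1}\cdots U_{j_n}).
\]
Thus $\log\det L_A(U)$ is a series $\tr\sum_w a_w U^w$ supported on positive words, with $a_w=(-1)^{|w|+1}\tr(A^w)/|w|$. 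Similarly $\overline{\log\det L_B(U)}=\tr\sum_w \overline{b_w}\,U^{w^{-1}}$ is supported on negative words, since $\overline{\tr(U^w)}=\tr(U^{w^{-1}})$. The combined coefficient function $c$ has $c_\varnothing=0$ and pairs $c_w c_{w^{-1}}$ only between positive words $w$ and their inverses. The constant in Theorem~\ref{thm:free-sslt-prob} then evaluates to
\[
\tfrac12\sum_w c_wc_{w^{-1}}|w| = \sum_{n\ge1}\frac1n\sum_{|w|=n}\tr(A^w)\overline{\tr(B^w)}
=\sum_{n}\frac1n\tr\Big(\sum_j A_j\otimes\overline{B_j}\Big)^n,
\]
which is $\tr\log(I-\sum_j A_j\otimes\overline{B_j})^{-1}$. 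Exponentiating gives the claimed determinant. The series converges because the Cauchy--Schwarz-type bound $\rho(\sum A_j\otimes\overline{B_j})^2\le\rho(\sum A_j\otimes\overline{A_j})\,\rho(\sum B_j\otimes\overline{B_j})$ shows $\rho(\sum A_j\otimes\overline{B_j})<1$. I would prove that bound by viewing these as completely positive maps, or by using the fact that under the hypothesis $\sum A_j\otimes\overline{A_j}$ is similar to a strict row contraction. Admissibility is a bookkeeping condition that I would check directly against Definition~\ref{def:admissible}.

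The main obstacle is justifying the logarithm expansion. Theorem~\ref{thm:free-sslt-prob} needs $\sum|c_w||w|<\infty$, which requires the $\ell^1$ sum $\sum_n\sum_{|w|=n}|\tr A^w|$ to converge. That is much stronger than the $\ell^2$-type condition $\rho(\sum A_j\otimes\overline{A_j})<1$, which only controls $\sum_{|w|=n}|\tr A^w|^2$. Worse, $\log\det(I+X)$ need not be given by a convergent series: $\|X\|$ can exceed $1$, and $I+X$ may even be singular for some $U$. The approach is therefore to first do a reduction, by scaling or similarity. Since $\rho(\sum A_j\otimes\overline{A_j})<1$, after a simultaneous similarity $A_j\mapsto SA_jS^{-1}$ (which does not change $\det L_A$) we may assume $\sum_j A_jA_j^*<I$, i.e.\ the row $(A_1,\dots,A_g)$ is a strict contraction.

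I still need $\|X\|<1$ almost surely, or at least for the limiting free Haar unitaries. This is where a formula for the spectral radius of $\sum A_j\otimes u_j$ with free Haar $u_j$ enters; the abstract advertises it, and I expect it to equal $\rho(\sum A_j\otimes\overline{A_j})^{1/2}$. Combined with strong convergence of independent Haar unitaries (Collins--Male), this should give $\rho(X)<1$ with high probability.

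A cleaner route I would try first is analytic continuation in a parameter. Replace $A_j,B_j$ by $zA_j$ and $\overline{w}B_j$. For $|z|,|w|$ small the coefficients are $\ell^1$-summable with weight $|w|$, since $|\tr A^w|\le k\|A\|^{|w|}$ and there are $(2g)^n$ words, so Theorem~\ref{thm:free-sslt-prob} applies directly and gives the identity. Both sides are polynomials (for fixed $N$) or analytic functions in $(z,\overline w)$. To extend the limit to the full region where $\rho(\sum zA_j\otimes\overline{z A_j})<1$, and in particular to $z=w=1$, I would use Vitali/Montel. This requires a locally uniform bound $\mathbb E_N|\det L_{zA}(U)|^2\le C$ on that region. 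The second-moment bound is itself the hard step. I would attack it by Weingarten calculus, giving a genus expansion of $\mathbb E_N|\det(I+X)|^2$ whose leading term is $\det(I-\sum A_j\otimes\overline{A_j})^{-1}$, with subleading terms controlled uniformly in $N$.
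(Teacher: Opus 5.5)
\textbf{There is a genuine gap: the uniform moment bound that your Montel step needs is never proved.}

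Several parts of your plan match the paper's proof of this case ($C=D=0$ in Theorem~\ref{thm:free-sslt-pencil}):
the small-coefficient identity via Theorem~\ref{thm:free-sslt-prob}, the similarity reduction to $\|A\|_{row},\|B\|_{row}\le r<1$, and the Montel/identity-theorem extension. The missing input is $\sup_N \mathbb E_N|\det L_A(U)|^2<\infty$, locally uniformly on the row ball; after that, Cauchy--Schwarz handles the mixed expectation. This bound is the entire difficulty of the theorem.

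A ``Weingarten genus expansion with subleading terms controlled uniformly in $N$'' is a hope, not an argument, for three reasons.
\begin{enumerate}
\item $\det(I+X)$ has degree $kN$, so the number of terms and Weingarten corrections grows with $N$.
\item For $g\ge 2$ there is no exact orthogonality of trace monomials that kills the corrections, as there is for $g=1$.
\item On the row ball one typically has $\|A\|_{\max\ell^1}>1$. So for every $N$ there are unitaries with $\|\sum_j A_j\otimes U_j\|>1$, or even $\det L_A(U)=0$, and any expansion governed by worst-case norms diverges.
\end{enumerate}
The paper notes that exactly this combinatorial route yields only special cases.

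Your remark that strong convergence gives $\rho(\sum_j A_j\otimes U_j)<1$ with high probability does not close the gap either. It says nothing about $|\det L_A(U)|^2$ on the exceptional set. Moreover, qualitative strong convergence determines unnormalized traces only up to $o(N)$, which is useless inside an exponential.

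\textbf{The missing idea.} The paper (Proposition~\ref{prop:uniform-bound-l2}(2), via Theorem~\ref{thm:uniform-bound}) dominates the determinant by a \emph{smoothed} log-determinant and combines concentration with a \emph{quantitative} strong-convergence rate.
\begin{enumerate}
\item Put $p_A=L_AL_A^*$. By Theorem~\ref{thm:spectral-radius}, $\rho(\sum_j A_j\otimes u_j)=\rho_{out}(A)<1$, so $\sigma(p_A(u,u^*))\subset[a,b]\subset(0,\infty)$ uniformly for $\|A\|_{row}\le r$.
\item Choose $f$ with six bounded derivatives, $f=\log$ on $[a,b]$ and $f\ge\log$ on $(0,\sup_U\|p_A(U,U^*)\|]$. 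Then $|\det L_A(U)|^2\le\exp F_N$ pointwise, even at singular $U$, where $F_N=\tr f(p_A(U,U^*))$.
\item $F_N$ is $O(\sqrt N)$-Lipschitz, so concentration of measure makes $F_N-\mathbb E F_N$ sub-Gaussian with $N$-independent constants.
\item Parraud's estimate (Theorem~\ref{thm:parraud-asymptotics}) gives $\mathbb E F_N=N(\tr_k\otimes\tau)\log p_A(u,u^*)+O((\log N)^2/N)$. The main term equals $2kN\log\Delta_{FK}(L_A(u))$, which is $0$ by Lemma~\ref{lem:FK-determinant}(2), since $\rho<1$ and all $\tau$-moments of $\sum_j A_j\otimes u_j$ vanish.
\item Hence $\sup_N\mathbb E_N|\det L_A(U)|^2\le\sup_N\mathbb E\exp F_N<\infty$, uniformly over the bounded family $\|A\|_{row}\le r$.
\end{enumerate}
With this estimate your Montel step goes through. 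Without it, your argument proves the theorem only for coefficients small enough that $\sum_j\|A_j\|<1$.
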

The spectral radius assumptions on $A$ and $B$ will guarantee that $\rho\left(\sum_{j=1}^g A_j\otimes \overline{B_j}\right)<1$, so the right hand side makes sense. In fact we will prove the more general Theorem~\ref{thm:free-sslt-pencil}, which also allows for quotients of determinants. In Section~\ref{sec:rational-prelim} we will extend this further to determinants of certain noncommutative rational functions of the $U_j$ (see Corollary~\ref{cor:free-sslt-rational}). Some special cases of Theorem~\ref{thm:intro-free-pencil} were proved in \cite{jury2025determinantsrandomunitarypencils}, somewhat in the spirit of the Bump-Diaconis proof of the classical SSLT mentioned above, but the combinatorial methods used there are insufficient to prove Theorem~\ref{thm:intro-free-pencil} in full generality. 

However, Theorem~\ref{thm:intro-free-pencil} and its generalizations are {\em not} straightforward applications of the general free SSLT (Theorem~\ref{thm:free-sslt-prob}). What is at stake is determining the correct domain of convergence, that is, of determining the appropriate size (spectral radius) hypothesis to place on $A$ and $B$. If one makes the stronger assumptions that
\begin{equation}\label{eqn:intro-easy-bounds}
\sup_U \|L_A(U)\|, \quad \sup_U \|L_B(U)\|\leq r<1
\end{equation}
(where the supremum is taken over all unitary matrices of all sizes) then one can expand $\log L_A(U)$ and $\log L_B(U)$ as norm convergent power series at each level $N$, and the ``easy" free SSLT can be applied. However, the spectral radius assumptions in Theorem~\ref{thm:intro-free-pencil} do not imply the bounds \eqref{eqn:intro-easy-bounds} in general (even up to a similarity), so this latter theorem is rather more subtle. In contrast, in one variable once $\rho(A)<1$ then $A$ is similar to a matrix $A^\prime$ with $\|A^\prime\|<1$ (this is a well-known theorem of Rota), so on replacing $A$ by $A^\prime$ (which does not change the determinants) we have $\|A^\prime \otimes U\|=\|A^\prime\|<1$ for all $U$, that there is nothing more to do. 

Ultimately, the mechanism that allows for this larger-than-expected domain of convergence is the so-called {\it strong convergence} (or more precisely {\it quantitative weak convergence}) phenomenon for random unitary matrices. Very loosely speaking, the idea is the following.  Consider a free $*$-polynomial $p$, which we can evaluate at systems of unitary matrices $U_1, \dots, U_g$ at any size $N$. It is well known \cite{choi, exel-loring} that supremum of $\|p(U,U^*)\|$ over all unitary matrices of all sizes is equal to the norm of $p$ in the {\em full} group $C^*$-algebra $C^*(\mathbb F_g)$ (that is, the full group $C^*$-algebra over $\mathbb F_g$ is {\em residually finite dimensional}). Thus, the $C^*(\mathbb F_g)$ norm describes the ``worst case" behavior of the norm on matrices. However, the strong convergence phenomenon for independent unitary matrices \cite[Theorem 1.4]{Collins_male_strong_asymp_free} says that the norm of $\|p(U,U^*)\|$ at size $N$ converges in probability, as $N\to\infty$, to the norm of $p(u,u^*)$ in the {\em free Haar unitaries}, that is, the norm of $p$ in the {\em reduced} group $C^*$-algebra $C^*_{r}(\mathbb F_g)$. So while the worst case behavior is governed by the norm in the full group $C^*$-algebra, in contrast, the (generically smaller)  $C^*_{r}(\mathbb F_g)$ norm captures the ``bulk" or ``typical" behavior of the norm on matrices, for large $N$. This heuristic suggest that we examine the spectral radius of $\sum_{j=1}^g A_j\otimes u_j$, which should provide an intuition for the ``typical" spectral radius of $\sum_{j=1}^g A_j\otimes U_j$ for large size $U$. We will prove the following proposition, which may be of independent interest: 
\begin{proposition}\label{prop:intro-spectral-radius}
    Let $X_1, \dots, X_g$ be $k\times k$ matrices. Then the spectral radius of the operator $\sum_{j=1}^g X_j\otimes u_j$ in $M_k(\mathbb C) \otimes C^*_{r}(\mathbb F_g)$ is equal to 
    \[
    \rho\left(\sum_{j=1}^g X_j\otimes \overline{X_j}\right)^{1/2}.
    \]
\end{proposition}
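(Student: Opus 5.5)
Write $T=\sum_{j=1}^g X_j\otimes u_j$ and $\Phi(Y)=\sum_j X_j Y X_j^*$, a completely positive map on $M_k(\mathbb C)$. The plan is to compute $\rho(T)$ through Gelfand's formula, $\rho(T)=\lim_n\|T^n\|^{1/n}$. The norms $\|T^n\|$ will be estimated from above and below by quantities governed by $\Phi$.

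Two preliminary identifications tie $\Phi$ to the stated quantity. Under the vectorization identification $M_k\cong\mathbb C^k\otimes\mathbb C^k$, the map $\Phi$ corresponds to the matrix $\sum_j X_j\otimes\overline{X_j}$, so $\rho(\Phi)=\rho\left(\sum_j X_j\otimes\overline{X_j}\right)$. Since $\Phi$ is positive, Perron–Frobenius (Krein–Rutman) theory for positive maps on the cone of positive semidefinite matrices gives $\rho(\Phi)=\lim_n\|\Phi^n(I)\|^{1/n}$. Hence the claim reduces to showing $\|T^n\|^2\approx\|\Phi^n(I)\|$ up to factors that are subexponential in $n$.

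\emph{Upper bound.} Expand
\[
T^n=\sum_{|w|=n} X_w\otimes u_w, \qquad X_w=X_{j_1}\cdots X_{j_n},\quad u_w=u_{j_1}\cdots u_{j_n},
\]
where $w$ runs over positive words of length $n$. Distinct positive words are distinct group elements, so the $u_w$ lie in a single "sphere" of the free group. Haagerup's inequality then applies: for $x=\sum_{|w|=n}a_w\otimes \lambda(w)$ with matrix coefficients $a_w$ supported on words of length $n$,
\[
\|x\|\le (n+1)\max\left(\Big\|\sum_w a_wa_w^*\Big\|,\ \Big\|\sum_w a_w^*a_w\Big\|\right)^{1/2}.
\]
This is the operator-valued version due to Haagerup and Pisier, with a polynomial factor that is harmless after taking $n$-th roots. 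Here $\sum_w X_wX_w^*=\Phi^n(I)$ and $\sum_w X_w^*X_w=(\Phi^*)^n(I)$. The dual map $\Phi^*(Y)=\sum_j X_j^*YX_j$ has the same spectral radius, since its matrix is the transpose-conjugate of that of $\Phi$. Therefore $\limsup\|T^n\|^{1/n}\le\rho(\Phi)^{1/2}$. A cleaner alternative, valid because only positive words appear, is to note that $\{u_w\}$ restricted to positive words generate a copy of the Cuntz–Toeplitz isometries compressed, giving the bound without the $(n+1)$ factor. Either route suffices.

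\emph{Lower bound.} Let $\tau$ be the trace on $C^*_r(\mathbb F_g)$ and use freeness of the positive words: $\tau(u_w^*u_{w'})=\delta_{w,w'}$. This gives
\[
(\mathrm{id}\otimes\tau)\big(T^n(T^n)^*\big)=\sum_{w}X_wX_w^*=\Phi^n(I).
\]
The map $\mathrm{id}\otimes\tau$ is contractive and positive, so $\|\Phi^n(I)\|\le\|T^n\|^2$, which yields $\liminf\|T^n\|^{1/n}\ge\rho(\Phi)^{1/2}$. Combining the two bounds proves the proposition.

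The main obstacle is the upper bound, specifically invoking the correct matrix-coefficient form of Haagerup's inequality, including the column norm, and checking that the polynomial-in-$n$ constant disappears under $n$-th roots. If the operator-valued Haagerup inequality is not citable in the needed form, the fallback is the positive-words structure. Since $u_{j}\xi_w=\xi_{jw}$ on positive words and the ranges for different $j$ are orthogonal there, one can try to dominate $\|T^n\|$ by the norm of the analogous operator built from Cuntz isometries. That operator is known to equal $\|\Phi^n(I)\|^{1/2}$ up to the dual-map term, which would give the same estimate.
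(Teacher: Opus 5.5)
Your lower bound is correct: the computation $(\mathrm{id}\otimes\tau)(T^nT^{n*})=\Phi^n(I)$ is a clean, self-contained substitute for the Haagerup--Pisier lower bound the paper cites. The identifications $\rho(\Phi)=\rho(\sum_j X_j\otimes\overline{X_j})=\rho(\Phi^*)$ and $\rho(\Phi)=\lim_n\|\Phi^n(I)\|^{1/n}$ are also fine.

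The gap is in the upper bound. The inequality you invoke,
\[
\Big\|\sum_{|w|=n}a_w\otimes\lambda(w)\Big\|\le (n+1)\max\Big(\Big\|\sum_w a_wa_w^*\Big\|,\Big\|\sum_w a_w^*a_w\Big\|\Big)^{1/2},
\]
is not the operator-valued Haagerup inequality, and it is false for $n\ge 2$. The Haagerup--Pisier row/column bound holds only for words of length one. For a counterexample, take $n=2m$, index $\mathbb C^{g^m}$ by positive words of length $m$, and for $w=vv'$ set $a_w=e_{v,v'}$ (matrix units). Then $\sum_w a_wa_w^*=\sum_w a_w^*a_w=g^m I$, so your right-hand side is $(n+1)g^{n/4}$. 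However, compress $x$ as a map from $\mathbb C^{g^m}\otimes\mathrm{span}\{\delta_{v'^{-1}}\}$ to $\mathbb C^{g^m}\otimes\mathrm{span}\{\delta_{v}\}$, with $v,v'\in W_m$. This yields the block matrix $(e_{v,v'})_{v,v'}$, which is $g^m$ times a rank-one projection, so $\|x\|\ge g^{n/2}$. The discrepancy is exponential in $n$ and survives $n$-th roots. The correct operator-valued inequality is Buchholz's, which the paper uses. Its right-hand side is $(n+1)\max_{0\le k\le n}\|X^{[n,k]}\|$, where $X^{[n,k]}=(a_{vw})_{v\in W_k,\,w\in W_{n-k}}$. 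Row and column norms are only the endpoint cases $k=0$ and $k=n$.

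What is missing is the argument that these intermediate terms are harmless when $a_w=X^w$. In that case $X^{[n,k]}$ factors as $X^{col}_kX^{row}_{n-k}$, so
\[
\|X^{[n,k]}\|\le\|(\Phi^*)^k(I)\|^{1/2}\,\|\Phi^{n-k}(I)\|^{1/2}\le C_\epsilon^2\bigl(\rho(\Phi)^{1/2}+\epsilon\bigr)^n
\]
uniformly in $k$. This gives $\limsup_n\|T^n\|^{1/n}\le\rho(\Phi)^{1/2}$; the paper packages this step as a Cauchy-product/radius-of-convergence lemma.

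Your Cuntz--Toeplitz fallback cannot replace this. The compression of $u_j$ to $\ell^2(\mathbb F_g^+)$ is the Cuntz--Toeplitz isometry, so that comparison only bounds $\|T^n\|$ from below. Moreover, a bound with no polynomial factor fails already at $n=1$: for $g=2$, $k=1$, $X_1=X_2=1$ one has $\|u_1+u_2\|=2$, while $\|\Phi(I)\|^{1/2}=\|\Phi^*(I)\|^{1/2}=\sqrt2$.
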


This result is not too difficult to prove given known norm estimates over the free group, but as far as we know it has not been observed in the literature up until now. 
Under our spectral radius assumption \eqref{eqn:intro-spectral-radius-assumption}, the strong convergence phenomenon will imply that even though for all $N$ there will exist $U$ with $\|\sum_{j=1}^g A_j \otimes U_j\|$ large, with high probability the spectral radius of $\sum_{j=1}^g A_j\otimes U_j$ will be less than $1$, so that $\log (I+\sum_{j=1}^g A_j\otimes U_j)$ will have a norm convergent series expansion for the ``bulk" of the $U$. However, getting explicit estimates on the contribution to $\mathbb E_N|\det L_A(U)|^2$ from the ``bad" $U$ is rather tricky, so the proof will take a more indirect route. 

\subsection{Outline of proofs and readers' guide} The key step in both proofs is getting suitably strong (uniform in $N$) tail bounds on the random variables involved. 
In the case of Theorem~\ref{thm:free-sslt-prob}, we seek to control $\mathbb E [\exp(G_N)]$ where $G_N$ is the random variable
\[
G_N:=\tr\sum_{w\in\mathbb F_g} c_w U^w.
\]
In Theorem~\ref{thm:intro-free-pencil}, if we apply the formal identity $\det A=\exp(\tr \log A)$ for positive matrices $A$, we seek to control $\mathbb E[\exp (F_N)]$, where 
\[
F_N:= \tr \left[\log L_X(U)L_X(U)^*\right].
\]
In both cases the tail bounds will ultimately follow from the concentration of measure phenomenon for the unitary group due to Meckes and Meckes \cite{ConcentrationMMeckes}, which says that for large $N$, Lipschitz functions of $U$ are nearly constant (for a precise statement, see Theorem~\ref{thm:concentration-of-measure} and Corollary~\ref{cor:com-cor} below). In the case of $G_N$ our assumption on the coefficients $c_w$ will imply that $G_N$ is Lipschitz with constant $O(\sqrt{N})$, which will allow concentration to be applied immediately. Then, the ``second order freeness" concept from free probability, due to Mingo, \'Sniady, and Speicher \cite{MingoSniadySpeicher2007}, gives the limiting values of the correlations between traces of functions of unitary matrices; this is what ultimately explains the appearance of the expression $\sum_w c_{w}c_{w^{-1}}|w|$. 

In the case of $F_N$, some care will be required since $L_X(U)L_X(U)^*$ may have a $0$ eigenvalue, or eigenvalues arbitrarily close to $0$. In particular, concentration of measure cannot be applied immediately, since these functions $F_N$ will in general not obey suitable Lipschitz estimates. So instead of getting tail bounds for $F_N$ directly, we introduce a smooth cutoff of the $\log$ function in the definition of $F_N$, and control the expectations of these cut-off random variables. This will necessitate an appeal to strong convergence (specifically, an estimate of Parraud \cite{Parraud2022}, which in turn will also require control of the spectral radius of matricial sums of free Haar unitaries $\sum_{j=1}^g X_j \otimes u_j$). Once we have suitable tail bounds, we can appeal to the ``easy" free SSLT to show that the claimed limit is valid for small $X$, then use the tail bounds to enlarge the domain of $X$ for which the expectations remain bounded. The result then follows by an analytic continuation and compactness argument (in the guise of Montel's theorem). 

Section~\ref{sec:prelim} contains a review of the preliminary facts we need; these include: basic facts about noncommutative functions and matrix analysis, the unitary group $\mathcal U(N)$ and the free group von Neumann algebra $L(\mathbb F_g)$, the concentration of measure phenomenon for $\mathcal U(N)$, free probability and second order freeness, strong convergence and Parraud's estimate, and some elementary lemmas. 

Section~\ref{sec:easy-free-sslt} contains the proof of the ``easy" free SSLT (Theorem~\ref{thm:free-sslt-prob}, proved there as Theorem~\ref{thm:a_free_szego}), and the corresponding fluctuation theorem (Corollary~\ref{cor:free-fluctuation}). Section~\ref{sec:uniform-tail-bounds} proves a key technical theorem giving uniform tail bounds which will be needed eventually for the proof of Theorem~\ref{thm:intro-free-pencil}. Section~\ref{sec:spectral-radius} contains the proof of the spectral radius formula Proposition~\ref{prop:intro-spectral-radius}, proved there as part of the more general Theorem~\ref{thm:spectral-radius}. This section is self-contained and can be read independently of the others.  Section~\ref{sec:free-sslt-pencil} combines the results of the previous three sections to prove the crucial uniform bounds on expected determinants (Proposition~\ref{prop:uniform-bound-l2}), and proves Theorem~\ref{thm:free-sslt-pencil}, which is a more general version of Theorem~\ref{thm:intro-free-pencil} stated above, allowing for both products and quotients of determinants. In Section~\ref{sec:rational-prelim}, we begin with the necessary material on free rational functions and determinantal representations, which will then be used to prove Corollary~\ref{cor:free-sslt-rational}, which extends Theorem~\ref{thm:free-sslt-pencil} to determinants of free rational functions of $U$.

\section{Preliminaries and notation}\label{sec:prelim}

\subsection{Noncommutative Polynomials}
Let $\mathbb{F}^+_g$ denote the free monoid on $g$ symbols $\{1, \dots, g\}$. We define the algebra of noncommutative polynomials in $g$ indeterminates
$x = (x_1,\dots,x_g)$ by
\[
\C\langle x\rangle := \C \mathbb{F}_g^+,
\]
where recall $\C S$ denotes all formal $\C$-linear combinations of some set $S$. 

To incorporate a $*$-operation, we also consider the free monoid
$\mathbb{F}_{2g}^+$ on $2g$ generators. For $1 \le i \le g$, define
\[
x_i^* := x_{i+g},
\qquad
x_{i+g}^* := x_i \quad (1 \le i \le g).
\]
For a word $w = i_1 \cdots i_n \in \mathbb{F}_{2g}^+$, define
\[
(x^w)^* := (x_{i_1} \cdots x_{i_n})^*
:= x_{i_n}^* \cdots x_{i_1}^*.
\]
If
\[
p = \sum_{w \in \mathbb{F}_{2g}^+} a_w x^w \in \C \mathbb{F}_{2g}^+,
\]
we define
\[
p^* := \sum_{w \in \mathbb{F}_{2g}^+} \overline{a_w} (x^w)^*.
\]

Rather than listing all $2g$ generators explicitly, we denote
$x_{g+1},\dots,x_{2g}$ by $x_1^*,\dots,x_g^*$ and write
\[
\C\langle x, x^* \rangle := \C \mathbb{F}_{2g}^+.
\]
Every polynomial $p \in \mathbb{C}\langle x, x^\ast\rangle$ admits a unique expansion
\[
p = \sum_{w \in \mathbb{F}^+_{2g}} a_w\, x^w,
\]
where the coefficients $p_w \in \mathbb{C}$ and only finitely many of them are nonzero. And we say $p$ is 
\emph{formally self-adjoint} to mean that $p$ is self-adjoint with respect to the $*$-algebra $\mathbb{C}\langle x, x^\ast\rangle$. In particular this will mean that $p(a_1,\cdots,a_g)$ will be self-adjoint when evaluated on any tuple of elements $a_1,\cdots,a_g$ in a $*$-algebra $\mathcal{A}$. We also define the \emph{degree} of a nc polynomial $p\in M_{k}(\C)\lip x,x^*\rip$ by 
\[\deg(p):= \max\{|w|: a_w\not=0\}\]
where $|w|$ denotes the length of the word $w$.

We do not wish to restrict our nc polynomials to have only scalar coefficients; instead, we allow matrix coefficients of arbitrary size.
For \(n\in\mathbb{N}\), we denote by \(M_n(\mathbb{C}\langle x\rangle)\) the algebra
of \(n\times n\) matrices whose entries are nc polynomials in
\(\mathbb{C}\langle x\rangle\).

There is a canonical identification of vector spaces
\[
M_n(\mathbb{C}\langle x\rangle)
\;\cong\;
M_n(\mathbb{C}) \otimes \mathbb{C}\langle x\rangle.
\]

By $M_n(\C)\lip x,x^*\rip$ we refer to any one of the above identifications. One also obtains a $*$-operation 
\[
p=\sum_{w} A_w \otimes x^w \in M_n(\mathbb{C}) \otimes \mathbb{C}\langle x\rangle
\]
by defining
\[
p^* := \sum_{w} A_w^* \otimes x^{w*}.
\]
We define the notions of \emph{formal self-adjointness} and of \emph{degree} exactly as in the scalar-coefficient case.

If \(p\in M_k(\mathbb{C}\langle x,x^*\rangle)\) and
\(X=(X_1,\dots,X_g)\in M_N(\mathbb{C})^g\), then the evaluation
\[
p(X,X^*)
\]
is an element of
\[
M_k(\mathbb{C}) \otimes M_N(\mathbb{C}),
\]
where the tensor product is identified with the Kronecker  product
of matrices.

More generally, if \(a_1,\dots,a_g\) belong to a \(*\)-algebra \(\mathcal{A}\),
then evaluation yields
\[
p(a_1,\dots,a_g) \in M_k(\mathbb{C}) \otimes \mathcal{A}.
\]

\begin{definition}\label{def:bounded-family}
    We say that a collection of polynomials $\mathcal P\subset  M_k(\mathbb{C}\langle x,x^*\rangle)$ is a {\em bounded family} if the degrees of the elements $p\in\mathcal P$ are uniformly bounded, and there is a uniform bound on the norms of the coefficients of $p\in \mathcal P$.
\end{definition}

\subsection{The unitary group and concentration of measure}
The set $\mathcal U(N)\subset M_N(\mathbb C)$ of $N\times N$ unitary matrices is a compact Lie group, and comes equipped with a (unique) invariant probability measure, the Haar measure. We will generally use probabilistic notation and terminology around integration on $\mathcal U(N)$, so we will write $\mathbb E[f(U)]$ for the integral $\int f(U)\, dU$, and $\mathbb P(E)$ for the probability of an event (measure of set), etc. In general, to avoid over-burdening the notation, we will usually drop the explicit $N$-dependence and  write $U$ in place of $U^{(N)}$; the capital letter $U$ will always be understood to refer to a unitary {\em matrix}. This should not cause any confusion. Similarly we will usually drop the $N$ from the expectation $\mathbb E_N$ and the trace $\tr_N$, unless we wish to emphasize it for clarity.

We refer to the book \cite{Meckes_2019_RMT_compact_group} for background material on $\mathcal U(N)$. The group $\mathcal U(N)$ inherits the Hilbert-Schmidt metric from its inclusion in $M_N(\mathbb C)$, and we equip the $g$-fold product $\mathcal U(N)^g$ with the metric 
\[
d(U,V):= \left( \sum_{i=1}^g \|U_i - V_i\|_{2}^2 \right)^{1/2},
\]
where $\|A\|_2^2:= \tr(A^*A)$. When we speak of Lipschitz functions on $\mathcal U(N)^g$, we always mean Lipschitz with respect to this metric. 

We will make repeated use of the concentration-of-measure phenomenon for $\mathcal U(N)$, expressed in the following theorem \cite[Corollary 17]{ConcentrationMMeckes}: 

\begin{theorem}[Concentration of measure for $\mathcal U(N)$]    \label{thm:concentration-of-measure}

Let $F : \mathcal{U}(N)^g \to \mathbb{R}$ be a Lipschitz function with Lipschitz constant $L$, and let $U_1,\ldots,U_g$ be independent Haar-distributed $N\times N$ unitary random matrices.
Then for every $t > |\mathbb{E}F(U_1,\ldots,U_g)|$,
\[
\mathbb{P}\;\!\bigl(\; |F(U_1,\ldots,U_g)| \ge t \;\Bigr)
\le 
2 \exp\!\left( \frac{ -N\bigl(t - |\mathbb{E}F(U_1,\ldots,U_g)|\bigr)^2}{12L^2} \right).
\]
\end{theorem}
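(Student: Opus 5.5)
The plan is to derive the statement from a logarithmic Sobolev inequality for Haar measure on $\mathcal U(N)^g$, with a constant of order $1/N$, followed by the standard Herbst argument. This is essentially the route of \cite{ConcentrationMMeckes}, and I would follow it.

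First, the reduction to a centered estimate. If $|F|\ge t$ then $|F-\mathbb EF|\ge t-|\mathbb EF|$, and for $t>|\mathbb EF|$ the right-hand side is positive. So it suffices to show that for every $s>0$,
\[
\mathbb P\bigl(|F-\mathbb EF|\ge s\bigr)\le 2\exp\!\left(-\frac{Ns^2}{12L^2}\right).
\]

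Second, the log-Sobolev inequality. I would prove that Haar measure on $\mathcal U(N)$, with the Hilbert--Schmidt metric, satisfies
\[
\operatorname{Ent}(f^2)\le C\,\mathbb E|\nabla f|^2, \qquad C=\frac{6}{N}.
\]
I would get this in three steps.
\begin{itemize}
\item The special unitary group $SU(N)$, with the metric induced by $\|\cdot\|_2$, has Ricci curvature bounded below by $N/2$. This follows from the bi-invariant metric formula $\mathrm{Ric}(X,X)=\tfrac14\sum\|[X,E_i]\|^2$ summed over an orthonormal basis $\{E_i\}$. The Bakry--\'Emery criterion then gives a log-Sobolev constant $4/N$ on $SU(N)$.
\item The map $(\theta,V)\mapsto e^{i\theta}V$ from $[0,2\pi/N)\times SU(N)$ onto $\mathcal U(N)$ pushes the product of normalized Lebesgue measure and Haar measure forward to Haar measure on $\mathcal U(N)$. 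Its derivative satisfies $\|d(e^{i\theta}V)\|_2^2\le 2N|d\theta|^2+2\|dV\|_2^2$.
\item Normalized Lebesgue measure on an interval of length $2\pi/N$, rescaled as in the previous step, has a log-Sobolev constant of order $1/N$. Tensorizing the two factors and pulling back Lipschitz functions through this map yields a constant $C$ of order $1/N$. Tracking the numbers carefully should give exactly $6/N$.
\end{itemize}
Since log-Sobolev inequalities tensorize with the same constant, the product Haar measure on $\mathcal U(N)^g$, with the $\ell^2$-product metric $d$ defined above, satisfies the same inequality with $C=6/N$.

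Third, the Herbst argument. Take $F$ to be $L$-Lipschitz and, after a standard smoothing approximation, smooth, so that $|\nabla F|\le L$. Apply the log-Sobolev inequality to $f=e^{\lambda F/2}$ and set $H(\lambda)=\mathbb E e^{\lambda F}$. This gives the differential inequality
\[
\lambda H'(\lambda)-H(\lambda)\log H(\lambda)\le \frac{C\lambda^2L^2}{4}H(\lambda).
\]
Integrating $\frac{d}{d\lambda}\bigl(\lambda^{-1}\log H\bigr)\le CL^2/4$ gives
\[
\mathbb E e^{\lambda(F-\mathbb EF)}\le e^{C\lambda^2L^2/4}.
\]
Chebyshev's inequality, optimized at $\lambda=2s/(CL^2)$, gives the one-sided bound $e^{-s^2/(CL^2)}=e^{-Ns^2/(6L^2)}$. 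Applying the same to $-F$ and adding the two tails gives a bound at least as strong as the one claimed, since $e^{-Ns^2/(6L^2)}\le e^{-Ns^2/(12L^2)}$.

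The main obstacle is the second step: handling the non-simply-connected, non-positively-curved-in-the-center group $\mathcal U(N)$. Its center direction has zero Ricci curvature, so Bakry--\'Emery does not apply to it directly. This is why the reduction through the covering by the circle factor times $SU(N)$ is needed, with care to keep the constant of order $1/N$. The Ricci computation for $SU(N)$ and the Herbst step are routine.
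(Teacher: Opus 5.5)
The paper does not prove this statement itself; it quotes it from Meckes--Meckes \cite[Corollary 17]{ConcentrationMMeckes}. Your outline (Bakry--\'Emery on $SU(N)$, the parametrization $(\theta,V)\mapsto e^{i\theta}V$ of $\mathcal U(N)$ by $[0,2\pi/N)\times SU(N)$, tensorization, Herbst) is the argument behind that corollary, and its structure is right. The problem is your constant bookkeeping. Since the content of the statement is the explicit $12$, this needs fixing.

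In your normalization $\mathrm{Ent}(f^2)\le C\,\mathbb E|\nabla f|^2$, Herbst gives $e^{-s^2/(CL^2)}$, so the statement corresponds to $C=12/N$. The ``$6/N$'' of Meckes--Meckes refers to the normalization $\mathrm{Ent}(f^2)\le 2C\,\mathbb E|\nabla f|^2$. Your route does not give $6/N$ in your normalization, for two reasons:
\begin{enumerate}
\item The $\theta$-factor alone (an interval of length $2\pi/\sqrt N$ in the metric $N\,d\theta^2$) has sharp constant $8/N$.
\item Suppose you use your estimate $\|d(e^{i\theta}V)\|_2^2\le 2N|d\theta|^2+2\|dV\|_2^2$ as a Lipschitz bound for the pull-back $g(\theta,V)=f(e^{i\theta}V)$, as ``pulling back Lipschitz functions'' suggests. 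Then the factor $2$ leaves you with $16/N$. That is too weak for the claimed bound, let alone a stronger one.
\end{enumerate}

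To get exactly the statement, bound the two partial derivatives separately, $|\partial_\theta g|\le\sqrt N\,|\nabla f|$ and $|\nabla_V g|\le|\nabla f|$. Then use subadditivity of entropy with a different constant on each factor:
\[
\mathrm{Ent}(g^2)\le \frac{8}{N^2}\,\mathbb E|\partial_\theta g|^2+\frac4N\,\mathbb E|\nabla_V g|^2\le \frac{12}{N}\,\mathbb E|\nabla f|^2 .
\]
Here $8/N^2$ is the constant for normalized Lebesgue measure on $[0,2\pi/N)$ with the standard metric, obtained by even reflection to a circle of radius $2/N$.

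Alternatively, note that $\Re\tr\bigl((iV)^*VX\bigr)=0$ for $X\in\mathfrak{su}(N)$. So the map is a local isometry from $N\,d\theta^2+\|dV\|_2^2$, which yields $8/N$ and hence the constant $8$ in place of $12$. This route still does not reach $6$.
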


We record a simple corollary which we will use frequently:

\begin{corollary}\label{cor:com-cor}
    Let $F_N:\mathcal U(N)^{(g)}\to \mathbb C$ be a sequence of functions, each of which is Lipschitz with Lipschitz constant at most $L\sqrt{N}$. Assume further that the $F_N$ are centered ($\mathbb EF_N=0$ for each $N$). Then the $F_N$ obey a uniform sub-Gaussian tail bound: for all $t>0$
    \begin{equation}
        \mathbb P(|F_N|>t) \leq 4\exp\left( \frac{-t^2}{24L^2}\right).
    \end{equation}
\end{corollary}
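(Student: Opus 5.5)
The plan is to split $F_N$ into its real and imaginary parts, apply Theorem~\ref{thm:concentration-of-measure} to each, and combine the two tail estimates with a union bound. Write $F_N = R_N + i I_N$ with $R_N = \mathrm{Re}\,F_N$ and $I_N = \mathrm{Im}\,F_N$. Both are real-valued functions on $\mathcal U(N)^g$. Since $|\mathrm{Re}\,z - \mathrm{Re}\,w| \le |z-w|$, each is Lipschitz with constant at most $L\sqrt N$ for the metric $d$. Because $\mathbb E F_N = 0$, we also have $\mathbb E R_N = \mathbb E I_N = 0$.

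Next I apply Theorem~\ref{thm:concentration-of-measure} to $R_N$ with Lipschitz constant $L\sqrt N$. The hypothesis $t > |\mathbb E R_N| = 0$ holds for every $t>0$, so
\[
\mathbb P(|R_N| \ge s) \le 2\exp\left(\frac{-N s^2}{12 L^2 N}\right) = 2\exp\left(\frac{-s^2}{12L^2}\right),
\]
and the factor of $N$ cancels exactly. This cancellation is the whole point of the $\sqrt N$ scaling in the Lipschitz hypothesis, and it is what makes the bound uniform in $N$. The same estimate holds for $I_N$.

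To finish, note that if $|F_N| > t$, then $R_N^2 + I_N^2 > t^2$, so at least one of $|R_N|$, $|I_N|$ exceeds $t/\sqrt2$. A union bound with $s = t/\sqrt2$ gives
\[
\mathbb P(|F_N|>t) \le \mathbb P(|R_N|\ge t/\sqrt2) + \mathbb P(|I_N|\ge t/\sqrt2) \le 4\exp\left(\frac{-t^2}{24L^2}\right),
\]
which is exactly the claimed constant.

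There is no real obstacle. The only point needing care is that Theorem~\ref{thm:concentration-of-measure} is stated for real-valued functions, which is why the proof passes to real and imaginary parts. This step is what produces the factor $4$ and the factor $2$ in the denominator $24L^2$.
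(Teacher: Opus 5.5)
Your proposal is correct and is exactly the paper's argument. Both split $F_N$ into real and imaginary parts, each centered and $L\sqrt{N}$-Lipschitz. Both apply Theorem~\ref{thm:concentration-of-measure} to each part, so the factor $N$ cancels, and then take a union bound at level $t/\sqrt{2}$ to get $4\exp(-t^2/(24L^2))$.
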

\begin{proof}
     The real and imaginary parts $\Re F_N$ and $\Im F_N$ are centered and obey the same Lipschitz estimates as $F_N$. From the usual union bound
 \begin{equation*}\label{eq:re-im-triangle}
    \mathbb{P}\left(\left|F_N\right|>t\right) \leq \mathbb{P}\left(\left|\Re F_N\right|>t/\sqrt{2}\right) +  \mathbb{P}\left(\left|\Im F_N\right|>t/\sqrt{2}\right)
\end{equation*}
we conclude from Theorem~\ref{thm:concentration-of-measure} that 
\begin{align*}
    \mathbb{P}\left(\left|F_N\right|>t\right) & \leq 2\exp\left(\frac{-N(t/\sqrt{2})^2}{12(L\sqrt{N})^2}\right)+ 2\exp\left(\frac{-N(t/\sqrt{2})^2}{12(L\sqrt{N})^2}\right)\\
    &=  4\exp\left(\frac{-t^2}{24L^2}\right).
\end{align*}
\end{proof}

\subsection{The free group and free probability}

Fix an integer $g \geq 1$, and let $\mathbb{F}_g$ denote the \emph{free group} on $g$ generators
\[
\{\gamma_1,\gamma_2,\dots,\gamma_g\}.
\]
which consists of all words in these letters and their inverses. The identity element corresponds to the empty word.
For a reduced word $w \in \mathbb{F}_g$, the number of letters appearing in $w$ is called the
\emph{length} of $w$ and is denoted by $|w|$. Denote $W_n = \{w\in \mathbb{F}_g: |w|=n\}$.

Let
\[
\lambda : \mathbb{F}_g \longrightarrow \mathcal{U}\bigl(\ell^2(\mathbb{F}_g)\bigr)
\]
denote the \emph{left regular representation} of $\mathbb{F}_g$.
The \emph{reduced $C^*$-algebra} of the free group is
\[
C^*_\lambda(\mathbb{F}_g)
:= C^*\bigl(\lambda(\mathbb{F}_g)\bigr)
\subseteq \mathcal{B}\bigl(\ell^2(\mathbb{F}_g)\bigr)
\]
and let $\{\delta_h\}_{h\in \mathbb{F}_g}$ denote the standard basis for $\ell^2(\mathbb{F}_g)$.

For $i=1,\dots,g$, set
\[
u_i := \lambda(\gamma_i).
\]
The operators $u_1,\ldots,u_g$ are freely independent Haar unitaries, in the sense of Voiculescu’s free probability theory, with respect to the canonical tracial state $\tau$ on $C^*_\lambda(\mathbb{F}_g)$.

\subsubsection{Asymptotic freeness and second order freeness}\label{sec:second-order}

The language and tools of {\em free probability} will be essential to our results. There are many sources, but all the material we need may be found in the book of Mingo and Speicher \cite{mingo2017free}. We will describe only the minimal material we will need, and refer to the text for more background. 

A {\em free probability space} is pair $(\mathcal A, \varphi)$ where $\mathcal A$ is a complex, unital $*$-algebra, and $\varphi:\mathcal A\to \mathbb C$ is a linear functional such that $\varphi(1)=1$. In our case $\varphi$ will always be {\em tracial} ($\varphi(ab)=\varphi(ba)$) and {\em positive} ($\varphi(a^*a)\geq 0$ for all $a\in \mathcal A$). A {\em second order free probability space} is a triple $(\mathcal A, \varphi, \varphi_2)$ where $(\mathcal A, \varphi)$ is a free probability space, and $\varphi_2:\mathcal A\times \mathcal A\to \mathbb C$ is a symmetric, bilinear functional which is tracial in both arguments and which satisfies 
$\varphi_2(a,1)=0=\varphi_2(1,b)$ for all $a,b\in \mathcal A$. 

In practice, $\varphi$ often occurs as the limit of normalized traces of random matrices (in a suitable sense), and $\varphi_2$ arises as the limit of covariances of pairs of {\em un-}normalized traces. We will describe the details in the unitary setting, which is the only one we will use in this paper. 

We let $u_1, \dots, u_g\in C^*_{\lambda}(\mathbb F_g)$ be the standard freely independent Haar unitaries, as above; and we let $\mathcal A$ be the *-algebra generated by the $u_j$. Taking the functional $\varphi$ to be the canonical trace $\tau:x\to \langle x\delta_e, \delta_e\rangle$ as above, $(\mathcal A, \varphi)$ is then a free probability space. To introduce $\varphi_2$, we need a couple of definitions. 

On words $w\in\mathbb F_g$ of length $n$ there is a {\em rotation map}
\[
\gamma_{i_1} \gamma_{i_2} \cdots \gamma_{i_n} \to \gamma_{i_n} \gamma_{i_1} \gamma_{i_2} \cdots \gamma_{i_{n-1}}.
\]
A {\em cyclic rotation} of $w$ is a word obtained by finitely many iterates of the rotation map, this induces an action of the cyclic group of order $n$ on the set of words of length $n$. A word $w$ is called {\em cyclically reduced} if every cyclic rotation of $w$ is reduced. We observe that $w$ is cyclically reduced if and only if $w^{-1}$ is cyclically reduced. We let $orb(w)$ denote the orbit of $w$ under rotations, that is, the set of all distinct words obtained as rotations of $w$. (For example, the word $w=aaab$ admits four distinct rotations, while $w=abab$ admits only two.) We let $j(w)$ denote the number of elements of $\mathbb Z_n$ leaving $w$ unchanged, i.e. $j(w)=|stab(w)|$. By the orbit-stabilizer theorem $|orb(w)|=\frac{|w|}{j(w)}$. 
(The reason to bother about all of this is that the trace monomial $\tr(U^w)$ is invariant under rotations of $w$.)

For a pair of cyclically reduced words $w,v$, we now define $\varphi_2$ as
\begin{equation}\label{eqn:phi2}
    \varphi_2(u^w, u^v) =\begin{cases} j(w) & \text{if } v \text{ is a cyclic rotation of } w^{-1} \\ 0 & \text{otherwise} \end{cases}
\end{equation}
Using bilinearity and the tracial property, it is clear that $\varphi_2$ extends uniquely to $\mathcal A\times \mathcal A$ in such a way that $(\mathcal A, \varphi, \varphi_2)$ is a second order free probability space.

A celebrated theorem of Voiculescu \cite{Voiculescu1991} says that for independent Haar unitary matrices $U_1^{(N)}, \dots, U_g^{(N)}$, we have for every word $w$
\begin{equation}
\lim_{N\to \infty} \frac1N \tr (U^{(N)w}) = \varphi(u^w).
\end{equation}
almost surely. One of the key results of  Mingo, \'{S}niady, and Speicher \cite{MingoSniadySpeicher2007} is a second order version, which says that the covariances of {\em un-}normalized traces converge to $\varphi_2$; precisely, for all cyclically reduced words $w,v$ we have
    \begin{equation}\label{eqn:second-order-freeness}
        \lim_{N\to \infty}\mathbb E[\tr(U^w)\tr(U^v)] = \varphi_2(u^w, u^v)
    \end{equation}
Moreover, they prove that for $p\in \mathbb C\langle x,x^*\rangle$ with $0$ constant term, the random variables $\tr\, p(U, U^{*})$ converge in distribution to a centered complex Gaussian random variable $Z$, with variance $\varphi_2(p(u,u^*), p(u,u^*))$. (The right way to understand all this is through the mechanism of {\em free cumulants}, which we do not introduce here, since we do not need them explicitly, but refer to \cite{mingo2017free}.) 

We find it helpful to think of (\ref{eqn:second-order-freeness}) as an asymptotic orthogonality relation: if we make the change of variables $v\to v^{-1}$, and observe that 
\[
\tr\, U^{v^{-1}} =\tr\, U^{v*} =\overline{\tr\, U^v}
\]
then on combining (\ref{eqn:second-order-freeness}) and (\ref{eqn:phi2}) we have

 \begin{equation}\label{eqn:orthogonality}
        \lim_{N\to \infty}\mathbb E[\tr(U^{w})\overline{\tr(U^{v})}] = \delta_{wv} j(w).
    \end{equation}
\begin{remark}\label{rem:no-bump}
The result can be upgraded to give asymptotic orthogonality relations for products $\tr(U^{w_1})\cdots \tr(U^{w_n})$; these same relations were discovered independently by R\u{a}dulescu \cite{radulescu2006combinatorial}, but we will not need to make explicit use of these more general formulas. (These more general orthogonality relations were used in \cite{jury2025determinantsrandomunitarypencils} to prove some special cases of Theorem~\ref{thm:intro-free-pencil}, but the fuller second order freeness machinery seems better suited to the present context.)
We also note that in one variable, these asymptotic orthogonality relations hold in a much sharper way, in that they become exact once $N$ is sufficiently large (see \cite[Theorem 2.1]{diaconis-evans}); this fact underlies the proof of SSLT given by Bump and Diaconis \cite{bump-diaconis-2002}. The lack of exact orthogonality in the multivariate setting is what prevents that proof from going through in our context.
\end{remark}

\subsubsection{The Fuglede-Kadison determinant} The trace $\tau$ on the free group factor $L(\mathbb{F}_g)$ and normalized matrix trace $\frac1k \tr_k$ gives us a normalized trace $\frac{1}{k}\tr_k\otimes \tau$ on $M_k(\C)\otimes L(\mathbb{F}_g)$. With respect to this trace we can define the \emph{Fuglede-Kadison} determinant of  invertable elements   in $M_k(\C)\otimes L(\mathbb{F}_g)$ by 
\[
\Delta_{FK}(a):=\exp\left[(\frac{1}{k}\tr_k\otimes \tau)(\log |a|)\right]
\]
    where $\log(|a|)$ well-defined by the functional calculus, since by the assumption that $a$ is invertible, the spectrum of $|a|=(a^*a)^{1/2}$ is in $(0,\infty)$. One can extend the definition of the Fuglede-Kadison determinant to non-invertable elements via a regularization procedure, but this is not needed for our purposes. We record the following basic properties of $\Delta_{FK}$, we refer the reader to \cite[Section 2]{Haagerup_Larsen} or \cite[Section 11.3]{mingo2017free}. In our application the tracial von Neumann algebra will be the free group factor $L(\mathbb F_g)$ (the von Neumann algebra generated by $C^*_\lambda(\mathbb F_g)\subset \mathcal B (\ell^2(\mathbb F_g))$). 
\begin{lemma}[Properties of $\Delta_{FK}$]\label{lem:FK-determinant}
Let $a, b$ be elements of a tracial von Neumann algebra $(M, \tau)$. Then
\begin{enumerate}
    \item $\Delta_{FK}(ab)=\Delta_{FK}(a)\Delta_{FK}(b)$
    \item (\cite[Lemma 4.3]{Haagerup_Larsen}) If $a$ has spectral radius $\rho(a)<1$ and $\tau(a^n)=0$ for all $n>0$,  then $\Delta_{FK}(1+a) =1$.
\end{enumerate}
\end{lemma}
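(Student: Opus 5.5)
We will work throughout with invertible elements, since that is the only case in which $\Delta_{FK}$ has been defined; note that $1+a$ is invertible when $\rho(a)<1$. Both items will follow from one differential identity, so we prove that identity first.

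\textbf{Step 1 (derivative formula).} Write $L(x):=\tau(\log|x|)=\tfrac12\tau(\log(x^*x))$. We claim that for a $C^1$ path $t\mapsto x_t$ of invertible elements,
\[
\frac{d}{dt}L(x_t)=\Re\,\tau\bigl(x_t^{-1}x_t'\bigr).
\]
We would prove this from the resolvent representation
\[
\log s=\int_0^\infty\Bigl(\frac{1}{1+\lambda}-\frac{1}{s+\lambda}\Bigr)\,d\lambda ,
\]
applied to the positive invertible element $h_t=x_t^*x_t$. Differentiating under the integral gives
\[
\frac{d}{dt}\,\tau\bigl((h_t+\lambda)^{-1}\bigr)=-\tau\bigl((h_t+\lambda)^{-1}h_t'(h_t+\lambda)^{-1}\bigr)=-\tau\bigl((h_t+\lambda)^{-2}h_t'\bigr),
\]
where the second equality uses traciality. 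Then $\int_0^\infty(h+\lambda)^{-2}\,d\lambda=h^{-1}$, so
\[
\frac{d}{dt}\tau(\log h_t)=\tau(h_t^{-1}h_t').
\]
Expanding $h'=x'^*x+x^*x'$ and using cyclicity, this equals
\[
\tau(x^{-1}x'^{*\,*})^{\!*}\!\!\!\!\quad\;=\;\overline{\tau(x^{-1}x')}+\tau(x^{-1}x')=2\Re\,\tau(x^{-1}x'),
\]
more precisely $\tau\bigl(x^{-1}(x^*)^{-1}x'^*x\bigr)=\tau\bigl((x^*)^{-1}x'^*\bigr)=\overline{\tau(x^{-1}x')}$ for the first term. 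Justifying the differentiation under the integral sign is routine: the integrand is dominated by $\|h^{-1}\|^{2}(1+\lambda)^{-2}$-type bounds uniformly on compact $t$-intervals. We expect this computation to be the main technical point of the whole lemma.

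\textbf{Step 2 (multiplicativity).} The invertible group of a von Neumann algebra is path connected. Indeed, by polar decomposition $x=v|x|$ with $v$ unitary, and we may write $v=e^{ik}$ with $k$ self-adjoint (Borel functional calculus) and $|x|=e^{h}$ with $h$ self-adjoint. So there are $C^1$ paths $x_t$, $y_t$ of invertibles with $x_0=y_0=1$, $x_1=a$, $y_1=b$. Along the product path $x_ty_t$, traciality gives
\[
\tau\bigl((x_ty_t)^{-1}(x_ty_t)'\bigr)=\tau\bigl(y^{-1}x^{-1}x'y\bigr)+\tau\bigl(y^{-1}y'\bigr)=\tau(x^{-1}x')+\tau(y^{-1}y').
\]
Integrating Step 1 from $0$ to $1$, and using $L(1)=0$, gives $L(ab)=L(a)+L(b)$. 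Exponentiating yields item (1).

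\textbf{Step 3 (item (2)).} Suppose $\rho(a)<1$. By the spectral radius formula $\|a^n\|^{1/n}\to\rho(a)<1$, so the series
\[
y:=\sum_{n\ge1}\frac{(-1)^{n+1}a^n}{n}
\]
converges in norm, and $e^{y}=1+a$ (holomorphic functional calculus). Apply Step 1 to the path $x_t=e^{ty}$. Since $x_t^{-1}x_t'=y$, the derivative of $L(x_t)$ is the constant $\Re\,\tau(y)$, and hence
\[
\log\Delta_{FK}(1+a)=\Re\,\tau(y).
\]
Norm continuity of $\tau$ and the hypothesis $\tau(a^n)=0$ for all $n>0$ give
\[
\tau(y)=\sum_{n\ge1}\frac{(-1)^{n+1}}{n}\,\tau(a^n)=0,
\]
so $\Delta_{FK}(1+a)=1$.

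The same argument applies verbatim in $M_k(\mathbb C)\otimes L(\mathbb F_g)$ with the normalized trace $\frac1k\tr_k\otimes\tau$.
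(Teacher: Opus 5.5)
The paper gives no proof of this lemma. It simply cites the standard references (Haagerup--Larsen, Section 2, and Mingo--Speicher, Section 11.3) for item (1), and Haagerup--Larsen, Lemma 4.3, for item (2). Your proposal is a correct, self-contained proof, and it is essentially the classical Fuglede--Kadison / de la Harpe--Skandalis route.

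The variational formula $\frac{d}{dt}\tau(\log|x_t|)=\Re\,\tau(x_t^{-1}x_t')$, together with path-connectedness of the invertible group of a von Neumann algebra, gives multiplicativity. The same formula applied to $t\mapsto e^{ty}$ with $y=\log(1+a)$ gives $\log\Delta_{FK}(1+a)=\Re\,\tau(y)=\Re\sum_{n\ge1}(-1)^{n+1}\tau(a^n)/n=0$.

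Your route buys transparency. Item (2) becomes an instance of the general identity $\Delta_{FK}(e^{y})=e^{\Re\,\tau(y)}$, and you can see exactly where $\rho(a)<1$ enters (norm convergence of the logarithm series) and where traciality enters.

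The citation buys generality. The references treat the regularized determinant on all of $M$, including non-invertible elements, where multiplicativity is subtler and your path argument does not apply. The paper only defines $\Delta_{FK}$ on invertible elements and only uses it for $L_X(u)$ with $\rho(\sum_j X_j\otimes u_j)<1$, so restricting to invertible elements loses nothing here.

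One minor point: the displayed expression with $\tau(x^{-1}x'^{*\,*})^{*}$ is garbled, but the ``more precisely'' computation right after it is the correct one.
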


\subsubsection{Quantitative weak convergence}
It will be essential for us to control traces of $f(p(U, U^*))$ for smooth functions $f$, more general than polynomials. 
Given $f:\R\ra\R$ at least $k$-times boundedly differentiable,  we define
\[\|f\|_{C^k}:= \sum_{n=0}^k \|f^{(n)}\|_{\infty}.\]

We have the following theorem of Parraud \cite[Theorem 1.1]{Parraud2022} (which we state in a somewhat simplified form, since we do not need its full generality). Note that our parameter $k$ is $M$ in Parraud's statement, and the $Z^{NM}$ matrices for us have the form $Z=I_N\otimes Y_k$, so that altogether $P(U\otimes I_N, Z^{Nk})$ defines a polynomial with $k\times k$ coefficients, evaluated at the $U$. Lastly, in our notation \(C(p)\) is exactly the polynomial with positive real coefficients \(L_p\) introduced by Parraud, evaluated at the operator norm of the largest matrix coefficient of \(p\).
(For the ``moreover" portion of the theorem statement, see the remark following \cite[Theorem 4.1]{Parraud2022}.)
\begin{theorem}[\cite{Parraud2022}]\label{thm:parraud-asymptotics}

Let $U_1^{(N)},\cdots, U_g^{(N)}$ be independent, $N\times N$, Haar distributed,  unitary random matrices, and $u_1,\cdots,u_g$ free Haar unitary operators. 

Suppose $p\in M_k(\C)\lip x,x^*\rip$ is a formally self-adjoint nc polynomial, and $f:\R\ra\R$ at least 6 times boundedly differentiable. Then there exists a constant $C(p)>0$ (only depending on $p$) such that for all $N>1$,  
\begin{equation}\label{eq:parraud-eq}
    \left| \mathbb{E}_N\!\left[\frac{1}{kN}\tr_{kN}\;f(p(U^{(N)},U^{(N)*})) \right]- \left(\frac{1}{k}\tr_k\otimes \tau\right)f(p(u,u^*)) \right|\leq \frac{\log(N)^2k^2}{N^2}C(p)\|f\|_{C^6}.
\end{equation}

Moreover if $\mathcal{P}$ is a self-adjoint and bounded family then there exists a single constant $C(\mathcal{P})>0$ such that equation~\eqref{eq:parraud-eq} holds with $C(\mathcal{P})$ for all $p\in \mathcal{P}$.
\end{theorem}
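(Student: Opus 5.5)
The plan is to deduce the statement directly from \cite[Theorem 1.1]{Parraud2022}, with no new estimates. The work is mostly a dictionary between the two settings, plus a check that the constant depends on $p$ only in the way claimed. Before that, it is worth recording the mechanism of Parraud's proof, since it is what makes the ``moreover'' clause transparent.

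\textbf{Mechanism of Parraud's proof.} One writes $f$ through its Fourier transform,
\[
f(x)=\int_{\R}\hat f(\xi)e^{i\xi x}\,d\xi ,
\]
so it suffices to compare
\[
\mathbb E_N\!\left[\frac{1}{kN}\tr_{kN}\, e^{i\xi p(U,U^*)}\right]
\qquad\text{and}\qquad
\left(\frac1k\tr_k\otimes\tau\right)e^{i\xi p(u,u^*)} .
\]
The comparison uses an interpolation: a unitary Brownian motion started at $U^{(N)}$ (or a free unitary Brownian motion coupled with it) runs from the Haar matrices to the free Haar unitaries. Along this path the difference of the two traces is written by a Duhamel formula as an integral of derivatives. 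Applying the unitary Schwinger--Dyson equations, i.e.\ integration by parts on $\mathcal U(N)$, shows that all $1/N$ terms cancel by traciality. What is left is an error of order $N^{-2}$, controlled by a polynomial in $|\xi|$ of degree at most about $6$; the $\log(N)^2$ factor comes from a time cutoff in the interpolation. Integrating against $|\hat f(\xi)|(1+|\xi|)^6$ and bounding this by $\|f\|_{C^6}$ (after a smoothing/truncation argument) gives \eqref{eq:parraud-eq}.

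\textbf{Step 1: the dictionary.} I would take Parraud's deterministic matrices $Z^{NM}$ with $M=k$ to be $Z=I_N\otimes Y$, where $Y$ ranges over the matrix units of $M_k(\C)$. Then Parraud's $P(U\otimes I_k, Z)$ is exactly our $p(U,U^*)\in M_k(\C)\otimes M_N(\C)$. On the limit side, $\frac1k\tr_k\otimes\tau$ is his trace on $M_k(\C)\otimes C^*_\lambda(\mathbb F_g)$. The factor $k^2$ in \eqref{eq:parraud-eq} is his $M^2$. The constant $C(p)$ is his polynomial $L_P$ with positive coefficients, evaluated at $\max_w\|A_w\|$ over the matrix coefficients of $p$, where $L_P$ depends only on the degree of $p$ and the number of its monomials. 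Self-adjointness of $p$ is needed so that $f(p(\cdot))$ is defined by functional calculus, and $f\in C^6_b$ is his regularity hypothesis.

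\textbf{Step 2: the ``moreover'' clause (main obstacle).} This is the only step needing care. I would first check that $L_P$ depends only on the combinatorial structure of $p$. If $\mathcal P$ is a bounded family, its degrees are at most some $d$, so every $p\in\mathcal P$ lies in the finite-dimensional space spanned by words of length at most $d$ in $2g$ letters. The number of monomials is therefore uniformly bounded, and so is the coefficient norm. Since $L_P$ has positive coefficients and is monotone in these quantities, one can take
\[
C(\mathcal P):=\sup_{p\in\mathcal P}C(p)<\infty .
\]
I expect the real difficulty to be verifying this monotone dependence in Parraud's proof, namely following the remark after \cite[Theorem 4.1]{Parraud2022}. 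If that remark proves insufficient, I would fall back on rerunning his Duhamel bounds for a generic polynomial of degree at most $d$ with coefficients of norm at most $R$, and check that each constant appearing is polynomial in $d$ and $R$.
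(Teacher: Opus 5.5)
Your plan matches the paper, which also gives no independent proof: it reads the statement off \cite[Theorem 1.1]{Parraud2022} with $M=k$, $Z^{NM}=I_N\otimes Y$, and $C(p)$ equal to Parraud's $L_P$ evaluated at the largest coefficient norm, and it takes the uniform constant for bounded families from the remark after \cite[Theorem 4.1]{Parraud2022}, exactly as you propose. One small consistency fix: for $C(p)=L_P(\max_w\|A_w\|)$ to hold as you write it, take the deterministic matrices to be the $I_N\otimes A_w$ themselves (with $P$ symmetrized to be self-adjoint) rather than matrix units, so that a single $P$ serves the whole bounded family and the uniformity is immediate.
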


\subsection{Lipschitz estimates}

In this subsection we record some elementary Lipschitz estimates on matrix functions which will be used repeatedly in what follows. 

We let $\|\cdot\|$ denote the usual operator norm of a matrix, and $\|A\|_2:=[\tr A^*A]^{1/2}$ the Hilbert-Schmidt norm.  For any matrices $A$ and $B$ we have $\|AB\|_2\leq \|A\|\|B\|_2$.

For matrices $A$ and $B$ we can write $A^n-B^n=\sum_{j=0}^{n-1} A^j (A-B)B^{n-j-1}$. From this we obtain the standard estimate
\begin{lemma}\label{lem:An-Lipschitz} If $\|A\|, \|B\|\leq r$, then $\|A^n-B^n\|_2\leq nr^{n-1} \|A-B\|_2$. 
\end{lemma}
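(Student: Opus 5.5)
The telescoping identity $A^n-B^n=\sum_{j=0}^{n-1}A^j(A-B)B^{n-j-1}$ stated just before the lemma reduces the claim to estimating each summand in the Hilbert--Schmidt norm. I will verify the identity first, then bound the terms and sum.

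\emph{The identity.} Expand the sum. The $j$-th summand equals
\[
A^{j+1}B^{n-j-1}-A^jB^{n-j}.
\]
Summing over $j=0,\dots,n-1$, consecutive terms cancel, leaving $A^n-B^n$. The case $n=0$ is trivial, since both sides of the lemma vanish.

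\emph{Bounding each summand.} For $\|XY\|_2$ we have the inequality $\|XY\|_2\le\|X\|\|Y\|_2$ recorded in the preliminaries. We also need its mirror image $\|XY\|_2\le\|X\|_2\|Y\|$. This follows from the first by taking adjoints, since $\|Z^*\|_2=\|Z\|_2$ and $\|Z^*\|=\|Z\|$. Applying both inequalities to the $j$-th summand gives
\[
\|A^j(A-B)B^{n-j-1}\|_2\le \|A^j\|\,\|A-B\|_2\,\|B^{n-j-1}\|.
\]
By submultiplicativity of the operator norm, $\|A^j\|\le r^j$ and $\|B^{n-j-1}\|\le r^{n-j-1}$. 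Hence each summand is at most $r^{n-1}\|A-B\|_2$.

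\emph{Conclusion.} Apply the triangle inequality to the sum of the $n$ summands. This gives $\|A^n-B^n\|_2\le nr^{n-1}\|A-B\|_2$.

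The only step needing any care is the right-multiplication version of the norm inequality, and the adjoint trick above handles it.
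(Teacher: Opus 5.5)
Your proposal is correct and follows the same route as the paper, which simply cites the telescoping identity $A^n-B^n=\sum_{j=0}^{n-1}A^j(A-B)B^{n-j-1}$ and the inequality $\|XY\|_2\le\|X\|\,\|Y\|_2$. You supply the details the paper leaves implicit: the verification of the identity and the right-multiplication version of the norm bound obtained by taking adjoints.
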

The important feature of the next two lemmas is that the Lipschitz constants do not depend on the size $N$ of the unitary matrices. 
  
By a similar trick to that used in the previous lemma one sees easily that
\begin{lemma}[Monomial Lipschitz Constants]\label{eq:monomial-lipschitz-bound}
    For any word $w\in \mathbb{F}_g$, $U,V\in \mathcal{U}(d)^g$,
    \begin{equation*}
        \|U^w - V^w\|_{2}\leq |w|d(U,V)
    \end{equation*}
    In particular if $\sum_{w\in\mathbb F_g} |c_w||w|=L<\infty$ then the function $(U_1, \dots, U_g)\to \sum_{w\in\mathbb F_g} c_w U^w$ is Lipschitz (with respect to the Hilbert-Schmidt metric on both sides) with constant at most $L$. 
\end{lemma}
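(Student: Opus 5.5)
The plan is to reduce to a single word and use the telescoping identity from the proof of Lemma~\ref{lem:An-Lipschitz}, together with the fact that the Hilbert--Schmidt norm is invariant under multiplication by unitaries.

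Fix $U,V\in\mathcal U(d)^g$ and a reduced word $w=\gamma_{i_1}^{\epsilon_1}\cdots\gamma_{i_n}^{\epsilon_n}$ with $n=|w|$ and $\epsilon_m=\pm1$. Write $U^w=X_1\cdots X_n$ and $V^w=Y_1\cdots Y_n$, where $X_m=U_{i_m}^{\epsilon_m}$ and $Y_m=V_{i_m}^{\epsilon_m}$. The telescoping identity is
\[
X_1\cdots X_n-Y_1\cdots Y_n=\sum_{m=1}^{n} X_1\cdots X_{m-1}(X_m-Y_m)Y_{m+1}\cdots Y_n .
\]
Each prefactor $X_1\cdots X_{m-1}$ and each suffix $Y_{m+1}\cdots Y_n$ is unitary. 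Left and right multiplication by unitaries preserve $\|\cdot\|_2$, so
\[
\|U^w-V^w\|_2\le\sum_{m=1}^n\|X_m-Y_m\|_2 .
\]
For inverse letters, $\|U_i^{-1}-V_i^{-1}\|_2=\|U_i^*-V_i^*\|_2=\|U_i-V_i\|_2$, because taking adjoints preserves the Hilbert--Schmidt norm. Hence each summand satisfies
\[
\|X_m-Y_m\|_2=\|U_{i_m}-V_{i_m}\|_2\le d(U,V),
\]
and summing over the $n$ letters gives $\|U^w-V^w\|_2\le |w|\,d(U,V)$.

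For the ``in particular'' statement, note that $\|U^w\|_2=\sqrt d$. The hypothesis $\sum_w|c_w||w|<\infty$ therefore gives absolute convergence of $\sum_w c_wU^w$ at each fixed size, at least for the non-identity words; the identity word contributes $c_\varnothing I$, which is a constant and cancels in differences. Applying the triangle inequality to the termwise difference,
\[
\Big\|\sum_w c_w(U^w-V^w)\Big\|_2\le\sum_w|c_w|\,\|U^w-V^w\|_2\le\Big(\sum_w|c_w||w|\Big)d(U,V)=L\,d(U,V).
\]

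No step is a serious obstacle. The only points needing care are the handling of inverse letters, via the adjoint-invariance of $\|\cdot\|_2$, and the justification of convergence of the infinite sum. The key observation is that unitary invariance removes any dependence on $d$ or on operator norms, which is what makes the Lipschitz constant independent of $N$.
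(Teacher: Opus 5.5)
Your proof is correct and is the argument the paper intends: the paper only says the bound follows from the same telescoping trick as Lemma~\ref{lem:An-Lipschitz}, and you carry this out explicitly. The prefix and suffix products are unitary, so the bound $\|AB\|_2\leq\|A\|\|B\|_2$ costs nothing, and $\|U_i^{*}-V_i^{*}\|_2=\|U_i-V_i\|_2$ takes care of inverse letters; your handling of the ``in particular'' clause by absolute convergence and the termwise triangle inequality is also sound.
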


\begin{lemma}
If $X_1, \dots X_g$ are $k\times k$ matrices, then the map
\[
(U_1, \dots, U_g)\to \sum_{j=1}^g X_j\otimes U_j
\]
is Lipschitz from $\mathcal U(N)^g$ to $(M_{kN}(\mathbb C), \|\cdot\|_2)$, with the estimate
\[
\left\|\sum_{j=1}^g X_j\otimes (U_j-V_j)\right\|_2 \leq \sqrt{k} \|X\|_{row} d(U,V).    
\]
\end{lemma}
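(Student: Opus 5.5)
The plan is to factor the sum as a product of a fixed ``row'' operator built from the $X_j$ and a ``column'' operator built from the differences $U_j-V_j$. Then the inequality $\|AB\|_2\leq \|A\|\,\|B\|_2$ recorded above does all the work. Here I take $\|X\|_{row}$ to mean the operator norm of the row matrix $[X_1\ \cdots\ X_g]$, i.e.\ $\|\sum_j X_jX_j^*\|^{1/2}$.

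\textbf{Step 1 (factorization).} Define the $kN\times gkN$ block row matrix
\[
R:=\begin{bmatrix} X_1\otimes I_N & \cdots & X_g\otimes I_N\end{bmatrix}
\]
and the $gkN\times kN$ block column matrix $D$ whose $j$-th block is $I_k\otimes (U_j-V_j)$. Since $(X_j\otimes I_N)(I_k\otimes(U_j-V_j))=X_j\otimes (U_j-V_j)$, the product is
\[
RD=\sum_{j=1}^g X_j\otimes (U_j-V_j).
\]

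\textbf{Step 2 (operator norm of $R$).} Compute
\[
RR^*=\sum_j X_jX_j^*\otimes I_N=\Big(\sum_j X_jX_j^*\Big)\otimes I_N,
\]
so $\|R\|=\|\sum_j X_jX_j^*\|^{1/2}=\|X\|_{row}$. This uses only that tensoring with $I_N$ preserves the operator norm.

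\textbf{Step 3 (Hilbert--Schmidt norm of $D$).} The squared Hilbert--Schmidt norm of a block column is the sum of the squared norms of its blocks. Also $\|I_k\otimes Y\|_2^2=\tr(I_k\otimes Y^*Y)=k\|Y\|_2^2$. Hence
\[
\|D\|_2^2=k\sum_j\|U_j-V_j\|_2^2=k\,d(U,V)^2.
\]

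\textbf{Step 4 (conclusion).} Combining the three steps,
\[
\Big\|\sum_j X_j\otimes(U_j-V_j)\Big\|_2=\|RD\|_2\leq\|R\|\,\|D\|_2=\sqrt{k}\,\|X\|_{row}\,d(U,V).
\]
Since $\sum_j X_j\otimes U_j-\sum_j X_j\otimes V_j=\sum_j X_j\otimes(U_j-V_j)$, this is exactly the claimed Lipschitz estimate.

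The argument has no real obstacle. The only point needing care is that the row norm of $(X_j\otimes I_N)_j$ coincides with that of $(X_j)_j$, which is what makes the constant independent of $N$.
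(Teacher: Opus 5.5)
Your proof is correct and is the same argument the paper gives: factor $\sum_j X_j\otimes(U_j-V_j)$ as the row matrix $\begin{pmatrix} X_1\otimes I_N & \cdots & X_g\otimes I_N\end{pmatrix}$ times the column with blocks $I_k\otimes(U_j-V_j)$, then apply $\|AB\|_2\leq\|A\|\,\|B\|_2$. You also supply the computations the paper leaves implicit, namely $\|R\|=\|X\|_{row}$ and $\|D\|_2=\sqrt{k}\,d(U,V)$.
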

Here $\|X\|_{row}:=\|X_1X_1^*+\cdots X_gX_g^*\|^{1/2}$. 
\begin{proof}Define a row matrix $A$ and a column matrix $B$ by 
\[
A= \begin{pmatrix} X_1\otimes I_N & \cdots & X_g\otimes I_N\end{pmatrix}, \quad B= \begin{pmatrix} I_k\otimes (U_1-V_1) \\ \vdots \\ I_k \otimes (U_g-V_g)\end{pmatrix}
\]
and apply the estimate $\|AB\|_2\leq \|A\|\|B\|_2$. 
\end{proof}

Next, we observe that the trace satisfies $|\tr(A)|\leq \sqrt{N} \|A\|_2$ for $N\times N$ matrices $A$ (apply Cauchy-Schwarz with respect to the tracial inner product).  

\begin{lemma}\label{lem:log-lipschitz} For $N\times N$ matrices $A, B$ with $\|A\|, \|B\|\leq r<1$, we have
\[
|\tr \log (I+A) -\tr\log(I+B)| \leq \sqrt{N}\frac{1}{1-r} \|A-B\|_2.
\]
\end{lemma}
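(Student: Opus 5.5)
We will use the power series of the logarithm, which converges in operator norm because $\|A\|,\|B\|\leq r<1$:
\[
\log(I+A)=\sum_{n=1}^\infty \frac{(-1)^{n+1}}{n}A^n ,
\]
and similarly for $B$. Subtracting term by term, we get
\[
\tr\log(I+A)-\tr\log(I+B)=\sum_{n=1}^\infty \frac{(-1)^{n+1}}{n}\,\tr\left(A^n-B^n\right),
\]
provided we justify exchanging the trace with the infinite sum. On $N\times N$ matrices the trace is a bounded linear functional and the series converges absolutely in operator norm, so the exchange is legitimate.

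Next we bound each term. By the Cauchy--Schwarz observation just made, $|\tr(A^n-B^n)|\leq \sqrt{N}\,\|A^n-B^n\|_2$. By Lemma~\ref{lem:An-Lipschitz}, $\|A^n-B^n\|_2\leq n r^{n-1}\|A-B\|_2$. Putting these together,
\[
|\tr\log(I+A)-\tr\log(I+B)|\leq \sqrt{N}\,\|A-B\|_2\sum_{n=1}^\infty \frac{1}{n}\, n r^{n-1}=\sqrt{N}\,\|A-B\|_2\sum_{n=1}^\infty r^{n-1}.
\]
The factor $n$ produced by Lemma~\ref{lem:An-Lipschitz} cancels exactly the $1/n$ from the logarithmic series. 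The remaining geometric series sums to $\frac{1}{1-r}$, which gives the stated bound.

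There is no real obstacle here; the argument only needs these two earlier estimates. The one point deserving a remark is the convergence of the series: the hypothesis $\|A\|,\|B\|\leq r<1$ makes both logarithm series converge in operator norm, so $\log(I+A)$ agrees with the holomorphic functional calculus definition. If $r\geq 1$ were allowed, the argument would fail at exactly this step.
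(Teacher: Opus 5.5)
Your proof is correct and follows the paper's own argument: expand $\log(I+A)$ and $\log(I+B)$ as norm-convergent power series, bound each term by $\sqrt{N}\,\|A^n-B^n\|_2\le \sqrt{N}\,n r^{n-1}\|A-B\|_2$ using the Cauchy--Schwarz trace estimate and Lemma~\ref{lem:An-Lipschitz}, and sum the resulting geometric series to get $\frac{1}{1-r}$. Your series also carries the alternating sign $(-1)^{n+1}$, which the paper's displayed expansion leaves out; this makes no difference to the bound, since absolute values are taken term by term.
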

\begin{proof} We have the norm convergent power series expansion
\[
\log(I+A) =\sum_{n=1}^\infty \frac1n A^n.
\]
The estimate follows immediately from Lemma~\ref{lem:An-Lipschitz} and the above remark about the trace. 
\end{proof}

Finally we need the following fact from matrix analysis, see for example \cite[Lemma VII.5.5]{bhatia2013matrix}.
\begin{lemma}\label{lem:lipschitz-on-sa}
If $f:\mathbb R\to \mathbb R$ is Lipschitz with constant $L$, and $A, B$ are self-adjoint matrices, then 
\[
\|f(A)-f(B)\|_2 \leq L\|A-B\|_2.
\]
\end{lemma}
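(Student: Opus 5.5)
The plan is a direct computation in eigenbases, reducing the matrix inequality to the scalar Lipschitz bound applied entrywise. Since $A$ and $B$ are self-adjoint $N\times N$ matrices, choose orthonormal eigenbases: $Au_i=a_iu_i$ and $Bv_j=b_jv_j$ for $1\le i,j\le N$, with $a_i,b_j\in\mathbb R$. By the spectral theorem, $f(A)u_i=f(a_i)u_i$ and $f(B)v_j=f(b_j)v_j$.

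The key identity is that for any $N\times N$ matrix $X$ and any two orthonormal bases,
\[
\|X\|_2^2=\sum_{i,j}|\langle Xv_j,u_i\rangle|^2 .
\]
This holds because $\|X\|_2^2=\sum_j\|Xv_j\|^2$ (the trace computed in the basis $(v_j)$), and then each $\|Xv_j\|^2$ is expanded by Parseval in the basis $(u_i)$.

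Apply this first to $X=f(A)-f(B)$. Using self-adjointness, move $f(A)$ to the other side of the inner product: since $f$ is real-valued, $f(A)$ is self-adjoint and $f(A)u_i=f(a_i)u_i$ with $f(a_i)$ real. This gives
\[
\langle (f(A)-f(B))v_j,u_i\rangle=(f(a_i)-f(b_j))\langle v_j,u_i\rangle .
\]
The same computation with $X=A-B$ gives $\langle (A-B)v_j,u_i\rangle=(a_i-b_j)\langle v_j,u_i\rangle$.

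Combining these with the scalar bound $|f(a_i)-f(b_j)|\le L|a_i-b_j|$, we get
\[
\|f(A)-f(B)\|_2^2=\sum_{i,j}|f(a_i)-f(b_j)|^2|\langle v_j,u_i\rangle|^2\le L^2\sum_{i,j}|a_i-b_j|^2|\langle v_j,u_i\rangle|^2=L^2\|A-B\|_2^2 .
\]
Taking square roots gives the claim.

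There is no real obstacle here. The only point requiring care is to use two different eigenbases, since $A$ and $B$ need not commute, and to observe that the reality of the eigenvalues is what lets $f(A)$ pass to the other side of the inner product.
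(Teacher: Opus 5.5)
Your proof is correct: expanding $\|X\|_2^2=\sum_{i,j}|\langle Xv_j,u_i\rangle|^2$ in the two eigenbases and applying the scalar bound $|f(a_i)-f(b_j)|\le L|a_i-b_j|$ term by term is the standard argument. The paper does not prove the lemma itself; it cites \cite[Lemma VII.5.5]{bhatia2013matrix}, and the proof behind that reference is this same spectral-decomposition computation, so your approach agrees with it.
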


\section{The ``easy" free SSLT and a fluctuation theorem}\label{sec:easy-free-sslt}

\begin{definition}\label{def:admissible}
We will call a function $c:\mathbb F_g\to \mathbb C$ {\em admissible} if:
\begin{itemize}
\item[i)] $c$ is supported on the set of cyclically reduced words, and
\item[ii)] $c_w=c_v$ whenever $v$ is a cyclic rotation of $w$ (i.e the function $c$ is constant on orbits).
\end{itemize}
\end{definition}
{\bf Remark:} While the admissibility assumption is essential to the proofs of the results in this section, it is not really a limitation, since it is easy to check that if $c$ is any function with $\sum_{w\in\mathbb F_g} |c_w||w|<\infty$, then by averaging we can create an admissible function $c^\prime$ which still satisfies the summability condition, and with the property that
\[
\sum\limits_{w\in \mathbb{F}_g}c_w\tr(U^w) = \sum\limits_{w\in \mathbb{F}_g}c^\prime_w \tr(U^w) 
\]
for all unitary matrices $U$.

We recall the notation from Section~\ref{sec:second-order}.
\begin{lemma}\label{lem:second-order-moments-comp}
    If $c:\mathbb{F}_g\ra\C$ is admissible, finitely supported, and $c_{\varnothing}=0$, then
    \begin{equation*}\label{eq:second-moments}
        \varphi_2\left(\sum_{w\in \mathbb{F}_g}c_wu^w,\;\sum_{w\in \mathbb{F}_g}c_wu^w\right) = \sum_{w\in \mathbb{F}_g}c_{w}c_{w^{-1}}|w|.
    \end{equation*}
\end{lemma}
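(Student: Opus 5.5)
We expand by bilinearity and evaluate each term with the defining formula \eqref{eqn:phi2}. Since $c$ is finitely supported and $c_\varnothing=0$, we have
\[
\varphi_2\left(\sum_{w} c_w u^w, \sum_{v} c_v u^v\right) = \sum_{w,v\neq \varnothing} c_w c_v\, \varphi_2(u^w,u^v).
\]
By admissibility, every $w$ with $c_w\neq 0$ is cyclically reduced, so \eqref{eqn:phi2} applies directly to each pair $(w,v)$. The term is $j(w)$ when $v\in orb(w^{-1})$ and zero otherwise. The double sum therefore collapses to
\[
\sum_{w} c_w \sum_{v\in orb(w^{-1})} c_v\, j(w).
\]

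Next we use that $c$ is constant on orbits. Since $w^{-1}$ is cyclically reduced whenever $w$ is, every $v\in orb(w^{-1})$ satisfies $c_v=c_{w^{-1}}$. The inner sum is then $|orb(w^{-1})|\, c_{w^{-1}}\, j(w)$.

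The last point, which I expect to be the only mildly delicate step, is to show that $|orb(w^{-1})|\,j(w)=|w|$. By orbit–stabilizer, $|orb(w^{-1})|=|w^{-1}|/j(w^{-1})$. Inversion reverses the letter order and inverts each letter, and this conjugates the rotation action into the inverse rotation action. Hence a rotation fixes $w$ if and only if the corresponding inverse rotation fixes $w^{-1}$, which gives $j(w^{-1})=j(w)$. Together with $|w^{-1}|=|w|$, this yields $|orb(w^{-1})|\,j(w)=|w|$.

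Substituting, the sum becomes $\sum_w c_w c_{w^{-1}}|w|$, which is the claimed identity. One remark: the sum over $v$ runs over distinct words in the orbit, which is exactly how we counted it, since \eqref{eqn:phi2} evaluates $\varphi_2$ once for each distinct word $v$.
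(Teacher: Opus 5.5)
Your proposal is correct and follows the paper's proof: you expand bilinearly, apply \eqref{eqn:phi2}, use constancy of $c$ on orbits, and count $|orb(w^{-1})|$ by orbit--stabilizer. Your explicit check that $j(w^{-1})=j(w)$, via inversion intertwining rotation with inverse rotation, fills in a step the paper leaves implicit when it asserts that the number of $v\sim w^{-1}$ is $|w|/j(w)$.
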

\begin{proof}
    The fact that $c$ is constant on orbits under cyclic rotation means that $c_v=c_{w^{-1}}$ for every $v\sim w^{-1}$. By the orbit-stabilizer theorem (applied to the action of the cyclic group $\mathbb Z_n$ on the set of words of length $n$), the number of such $v$ is equal to $\frac{|w|}{j(w)}$. Thus for admissible, finitely supported $c$
    \[
   \varphi_2\left(\sum_{w\in\mathbb F_g} c_w u^w, \sum_{w\in\mathbb F_g} c_w u^w\right) = \sum_{w,v\in\mathbb F_g, \ v\sim w^{-1}} c_wc_v j(w) = \sum_{w\in \mathbb{F}_g}c_{w}c_{w^{-1}}|w|.
    \]
\end{proof}
\begin{theorem}[The ``easy" free SSLT]\label{thm:a_free_szego}
  Let $c:\mathbb F_g\to \mathbb C$ be an admissible function which satisfies \[\sum_{w\in \mathbb{F}_g}|c_w||w|<\infty,\] and let $G_N$ denote the random variable 
  \[ G_N:= \tr \sum_{w\in \mathbb F_g} c_w U^{(N)w}.\]
    Then 
    \begin{equation*}
        \mathbb E \left[ \exp(G_N)\right]\sim E_0^N\cdot E_1
    \end{equation*}  
where
\[
E_0=\exp(c_\varnothing), \quad E_1 = \exp\left(\frac12 \sum\limits_{w\in \mathbb{F}_g}c_wc_{w^{-1}}|w|\right).
\]
\end{theorem}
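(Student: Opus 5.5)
First I strip off the constant term. Writing $G_N = N c_\varnothing + \tilde G_N$, where $\tilde G_N := \tr \sum_{w\neq\varnothing} c_w U^w$, gives $\mathbb E[\exp G_N] = E_0^N\,\mathbb E[\exp\tilde G_N]$. So it suffices to show $\mathbb E[\exp \tilde G_N]\to E_1$. Every nonempty cyclically reduced word $w$ is not a proper power of anything trivial, so by Haar invariance (replace $U_i$ by $e^{i\theta}U_i$ and average) one should get $\mathbb E\,\tr U^w = 0$; more simply, I will just work with the centered variable and note that $\mathbb E\tilde G_N\to 0$ by Voiculescu plus the uniform bound $|\tr U^w|\le N$ (actually I only need $\mathbb E \tilde G_N$ bounded and convergent, which will come from second order freeness). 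The strategy is then: (a) prove convergence in distribution of $\tilde G_N$ to a Gaussian-type limit whose exponential moment is $E_1$, and (b) prove a uniform sub-Gaussian tail bound. Together these give uniform integrability of $\exp\tilde G_N$, hence convergence of expectations.

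\textbf{Tail bound.} By Lemma~\ref{eq:monomial-lipschitz-bound} the map $U\mapsto \sum c_w U^w$ is Lipschitz into Hilbert--Schmidt space with constant $L=\sum|c_w||w|$. Composing with the trace, which satisfies $|\tr A|\le\sqrt N\|A\|_2$, shows that $\tilde G_N$ is Lipschitz with constant $L\sqrt N$. Corollary~\ref{cor:com-cor} applied to $\tilde G_N-\mathbb E\tilde G_N$ then gives $\mathbb P(|\tilde G_N-\mathbb E\tilde G_N|>t)\le 4e^{-t^2/24L^2}$ uniformly in $N$. Consequently $\sup_N \mathbb E\exp(s|\tilde G_N-\mathbb E\tilde G_N|)<\infty$ for every $s>0$.

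\textbf{Limit in distribution.} For finitely supported truncations $c^{(M)}$ (keeping words with $|w|\le M$, which preserves admissibility), the Mingo--\'Sniady--Speicher result says that $\tr p(U,U^*)$, with $p=\sum_{w\ne\varnothing} c^{(M)}_w x^w$, converges in distribution and in moments to a complex Gaussian. Its relevant second moment is $\varphi_2(p,p)=\sum c_wc_{w^{-1}}|w|$ by Lemma~\ref{lem:second-order-moments-comp}, together with the mean (zero) and the pairing of $Z$ with $\bar Z$. For a complex Gaussian $Z$ of this kind, $\mathbb E e^{Z}=e^{\frac12\mathbb E Z^2}$, and here $\mathbb E Z^2$ is the limit of $\mathbb E[\tr p\,\tr p]=\varphi_2(p,p)$ by \eqref{eqn:second-order-freeness}. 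Hence $\mathbb E \exp(\tr p(U))\to \exp(\frac12\sum c^{(M)}_wc^{(M)}_{w^{-1}}|w|)$, the passage from distributional convergence to convergence of exponential moments being justified by the uniform tail bound above applied to the truncation. I expect that joint convergence of real and imaginary parts needs a little care, but it follows from the method of moments applied to $\tr p$ and $\overline{\tr p}=\tr p^*$.

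\textbf{Removing the truncation (main obstacle).} The tail $R_N^{(M)}=\tilde G_N-\tr p^{(M)}(U)$ is Lipschitz with constant $\epsilon_M\sqrt N$, where $\epsilon_M=\sum_{|w|>M}|c_w||w|\to0$. Its mean is controlled as well: by orthogonality \eqref{eqn:orthogonality} and the bound $\mathbb E|\tr U^w|^2\le$ (variance from concentration, $\le C|w|^2$) one gets $|\mathbb E R_N^{(M)}|\le$ something small uniformly in $N$ (I would bound it via $\mathbb E\tr U^w=0$ for nonempty cyclically reduced $w$ by rotation invariance $U_i\mapsto e^{i\theta_i}U_i$ when the exponent sum is nonzero, and otherwise use the Lipschitz variance bound). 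Then, using $|e^{a+b}-e^{a}|\le |e^a|(e^{|b|}-1)$ and Cauchy--Schwarz, the sub-Gaussian bound with constant $\epsilon_M$ gives $\sup_N|\mathbb E e^{\tilde G_N}-\mathbb E e^{\tr p^{(M)}}|\to0$ as $M\to\infty$. The limits also converge, since $\sum c^{(M)}_wc^{(M)}_{w^{-1}}|w|\to\sum c_wc_{w^{-1}}|w|$ by absolute summability. A standard $\epsilon/3$ argument then finishes the proof. The delicate point is exactly this uniform control of the mean of the remainder, and I would handle it as described.
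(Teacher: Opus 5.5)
Your outline follows the paper's proof step for step. You remove $c_\varnothing$, treat finitely supported $c$ via second-order freeness, $\mathbb{E}e^Z=e^{\frac12\mathbb{E}Z^2}$ and uniform integrability from Corollary~\ref{cor:com-cor}, then truncate, apply Cauchy--Schwarz with $\sup_N\mathbb{E}|e^{2\tilde G_N}|$ and a sub-Gaussian bound of constant $\epsilon_M\sqrt N$ for the tail, and finish with an $\epsilon/3$ argument.

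The paper gets past the point you flag by asserting $\mathbb{E}G_N=Nc_\varnothing$, so that $\tilde G_N$ and the tails are exactly centered and Corollary~\ref{cor:com-cor} applies to them directly. You are right to distrust this. It holds for words with some nonzero exponent sum (your rotation trick $U_i\mapsto e^{i\theta_i}U_i$), but not for words in $[\mathbb F_g,\mathbb F_g]$. For instance, $aba^{-1}b^{-1}$ is cyclically reduced, and integrating out $U_2$ first gives $\mathbb{E}\tr(U_1U_2U_1^*U_2^*)=\frac1N\mathbb{E}|\tr U_1|^2=\frac1N\neq0$. So there is a genuine gap here, and your patch does not close it:
\begin{itemize}
\item Voiculescu's theorem only gives $\mathbb{E}\tr U^w=o(N)$, not $o(1)$.
\item The inequality $\mathbb{E}|\tr U^w|^2\le \mathrm{Var}(\tr U^w)$ is backwards. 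Concentration bounds $\mathrm{Var}(\tr U^w)\le C|w|^2$, but $\mathbb{E}|\tr U^w|^2=\mathrm{Var}(\tr U^w)+|\mathbb{E}\tr U^w|^2$. A fluctuation bound carries no information about the mean, which is exactly the unknown quantity for commutator words.
\item The relation \eqref{eqn:orthogonality} is a limit for each fixed pair of words, with no uniformity in $|w|$ or $N$, so it cannot be summed over the tail $|w|>M$.
\end{itemize}
Consequently neither $\sup_N|\mathbb{E}R^{(M)}_N|\to0$ nor $\sup_N\Re\,\mathbb{E}\tilde G_N<\infty$ is established. The second is already needed for $\sup_N\mathbb{E}|e^{2\tilde G_N}|<\infty$ in your Cauchy--Schwarz step.

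Here is what your argument does prove. Run it on the centered variables $\tilde G_N-\mathbb{E}\tilde G_N$ and $R^{(M)}_N-\mathbb{E}R^{(M)}_N$, to which Corollary~\ref{cor:com-cor} applies verbatim (in the finitely supported step only finitely many means are involved, and each tends to $0$). This gives $\mathbb{E}\exp(\tilde G_N-\mathbb{E}\tilde G_N)\to E_1$, which suffices for the fluctuation statement of Corollary~\ref{cor:free-fluctuation}. Since $\mathbb{E}e^{G_N}=E_0^N\, e^{\mathbb{E}\tilde G_N}\,\mathbb{E}e^{\tilde G_N-\mathbb{E}\tilde G_N}$, the theorem as stated is equivalent to $\exp\bigl(\sum_{w\neq\varnothing}c_w\,\mathbb{E}\tr U^{(N)w}\bigr)\to1$.

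The missing ingredient is an estimate on word means: a bound $\sup_N|\mathbb{E}\tr U^{(N)w}|\le K(|w|)$ with $\sum_w|c_w|K(|w|)<\infty$, so essentially $K(n)=O(n)$ under the stated hypothesis. Together with $\mathbb{E}\tr U^{(N)w}\to0$ for each fixed $w$, dominated convergence would then finish. Neither concentration nor second-order freeness supplies such a bound. If instead $c$ is supported on words outside $[\mathbb F_g,\mathbb F_g]$, for example on $\mathbb F_g^+\cup(\mathbb F_g^+)^{-1}$ (the only case used in the proof of Theorem~\ref{thm:free-sslt-pencil}), your rotation trick makes every $\tr U^w$ with $w\neq\varnothing$ exactly centered. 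In that case your proof goes through as written.
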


\begin{proof} Since $\mathbb EG_N = Nc_\varnothing$, it will suffice to assume that $c_\varnothing=0$ and prove that
  \begin{equation*}
        \lim\limits_{N\ra\infty}
        \mathbb E \left[ \exp(G_N)\right]= \exp\left(\frac12 \sum\limits_{w\in \mathbb{F}_g}c_wc_{w^{-1}}|w|\right).
    \end{equation*}
We first assume that the function $c_w$ is finitely supported. In this case, since independent random unitaries are free of second order \cite[Theorem 3.13]{MingoSniadySpeicher2007}, it follows that 
    \begin{equation*}
         \sum_{w\in \mathbb F_g} c_w\; \tr\, U^{w} \xrightarrow[N\ra\infty]{\text{distribution}} Z
    \end{equation*}
where $Z$ is a centered complex Gaussian random variable, and its cumulants are obtained as limits of the corresponding cumulants of $G_N$. Here we rely on the fact that $G_N$ is a linear combination of random trace monomials $\tr \, U^{ w}$ ranging over cyclically reduced words $w$, which follows from the admissibility assumption.

In particular 
\begin{align}\label{eqn:exp-z-square}
\begin{split}
    \mathbb{E}\left[Z^2\right] &= \lim_{N\ra\infty}\mathbb E\left[\ \left(\sum_{w\in \mathbb F_g} c_w \;\tr U^{w}\right)\left(\sum_{w\in \mathbb F_g} c_w \;\tr U^{w}\right)\right]\\
                                &=\varphi_2\left(\sum_{w\in \mathbb{F}_g}c_wu^w,\;\sum_{w\in \mathbb{F}_g}c_wu^w\right)
\end{split}
\end{align}
where the last equality follows from second order freeness (\ref{eqn:second-order-freeness}) and bilinearity. 

By an elementary probability calculation, for a centered complex Gaussian $Z$ one has  $\mathbb{E}\left[\exp(Z)\right] = \exp\left(\frac{1}{2}\mathbb{E}\left[Z^2\right]\right)$. Then
\begin{align*}
    \mathbb{E}\left[\exp(Z)\right] &= \exp\left(\frac{1}{2}\mathbb{E}\left[Z^2\right]\right)\\
        &= \exp\left(\frac{1}{2}\varphi_2\left(\sum_{w\in \mathbb{F}_g}c_wu^w,\;\sum_{w\in \mathbb{F}_g}c_wu^w\right)\right)\\
        &= \exp\left(\frac12 \sum_{w\in\mathbb F_g} c_wc_{w^{-1}}|w|\right)
\end{align*}
by (\ref{eqn:exp-z-square}) and Lemma~\ref{lem:second-order-moments-comp}.

To finish the proof in the finitely supported case, we show that the sequence $\exp(G_N)$ is uniformly integrable. The random variables $G_N$ are centered, and each $G_N$ is Lipschitz on $\mathcal U(N)^g$, with Lipschitz constant bounded by $L\sqrt{N}$ (where $L=\sum_{w\in\mathbb F_g} |c_w||w|<\infty$). It then follows from concentration of measure (Corollary~\ref{cor:com-cor}) that we have the sub-Gaussian tail bound
\begin{equation}
      \mathbb{P}\left(\left|G_N\right|>t\right) \leq  4\exp\left(\frac{-t^2}{24L^2}\right).
\end{equation}
This implies that the sequence $\exp(G_N)$ is uniformly integrable, and therefore it will converge in distribution to $\exp(Z)$. Moreover, uniform integrability will also guarantee convergence of the expectations of $\exp(G_N)$:
\begin{equation*}
    \lim_{N\ra\infty}\mathbb{E}\left[\exp\left(G_N\right)\right] = \mathbb{E}\left[\exp(Z)\right].
\end{equation*}
This completes the proof in the case that $c_w$ is finitely supported.     

For the general case let $N,M\in \mathbb{N}$, and split the random variable $G_N= H_{N,M}+T_{N,M}$ where
\[
H_{N,M} = \tr \sum_{|w|\leq M} c_w U^{(N)w}, \quad T_{N,M} = \tr \sum_{|w|> M} c_w U^{(N)w}.
\]
Now consider
\begin{align*}
\begin{aligned}
&\left|
   \mathbb{E}\!\big[\exp G_N\big]
   - \exp\!\left(\frac12 
     \sum_{w\in\mathbb{F}_g}c_w c_{w^{-1}}\,|w|\right)
 \right|
\\[0.4em]
&\le \
\left|
  \mathbb{E}\!\left[\exp G_N - \exp H_{N,M}\right]
\right|
\\[0.4em]
&\quad
+ \left|
    \mathbb{E}\!\Bigl[\exp H_{N,M}\Bigr]
    - \exp\!\left(\frac12
      \sum_{|w|\le M}c_w c_{w^{-1}}\,|w|\right)
  \right|
\\[0.4em]
&\quad
+ \left|
    \exp\!\left(\frac12
      \sum_{|w|\le M}c_w c_{w^{-1}}\,|w|\right)
    - \exp\!\left(\frac12
      \sum_{w\in\mathbb{F}_g}c_w c_{w^{-1}}\,|w|\right)
  \right|.
\end{aligned}
\end{align*}

Examining the three terms on the right hand side of the inequality, we can choose $M$ large enough to make the last (deterministic) term arbitrarily small (observe that the series $\sum_{w\in\mathbb F_g} c_wc_{w^{-1}}|w|$ is absolutely convergent under the assumption $\sum_{w\in\mathbb F_g} |c_w||w|<\infty$). Now, for that choice of $M$, the middle term can be made arbitrarily small for $N$ large enough, from the first part of the proof.  Therefore it remains to show
\[
  \mathbb E  \left| \exp G_N- \exp H_{N, M}  \right| 
\]

goes to $0$ \emph{uniformly in $N$} as $M\ra \infty$. This will be accomplished by another application of concentration of measure. 

By factoring and Cauchy-Schwarz we have
\begin{align*}
    \mathbb E  \left| \exp G_N- \exp H_{N, M}  \right| 
    &= \mathbb E \left| \exp G_N \cdot \left(1-\exp (-T_{N,M})\right)\right| \\
    &\leq \left(\mathbb E |\exp 2G_N|\right)^{1/2} \left(\mathbb E |1-\exp (-T_{N,M})|^2 \right)^{1/2} 
\end{align*}
Now the random variables $G_N$ are centered and have $O(\sqrt{N})$ Lipschitz constants, so by concentration of measure (Corollary~\ref{cor:com-cor}), $G_N$ obeys a uniform sub-Gaussian tail bound, which implies that $\mathbb E |\exp 2G_N|$ is uniformly bounded in $N$. So, it remains to show $\mathbb E |1-\exp (-T_{N,M})|^2$ goes to $0$ as $M\to \infty$, uniformly in $N$. We begin by observing the trivial estimate
\[
 \mathbb E |1-\exp (-T_{N,M})|^2 \leq \mathbb E\left( \exp( |T_{N,M}|)-1\right)^2.
\]
Now let $\varphi(t):=(e^t-1)^2$. Then from elementary probability,

\begin{equation}\label{eq:layer-cake}
\mathbb{E}\Bigl[\Bigl(\exp(|T_{N,M}|) - 1\Bigr)^2 \Bigr]
= \int_{0}^{\infty} \varphi'(t)\,
\mathbb{P}\!\left(|T_{N,M}|>t\right)\,dt.
\end{equation}

Let $\delta_M = \sum_{|w|>M} |c_w||w|$. By its definition we seen that $T_{N,M}$ has Lipschitz constant bounded by $\delta_M \sqrt{N}$. By Corollary~\ref{cor:com-cor} again, we conclude that the $T_{N,M}$ obey a sub-Gaussian tail bound of the form
\begin{equation}\label{ineq:com_final_claim}
    \mathbb P(|T_{N,M}|>t)\leq 4\exp(-\alpha_M t^2)
\end{equation}
where $\alpha_M = \frac{1}{24\delta_M^2}$. So, $\alpha_M\ra \infty$ with $M$, and the bound is independent of $N$. 

We combine 
\eqref{eq:layer-cake} and  \eqref{ineq:com_final_claim} to obtain

 \begin{equation}\label{ineq:just_before_dct}\begin{aligned}
\mathbb{E}\Bigl[\Bigl(\exp(|T_{N,M}|) - 1\Bigr)^2 \Bigr]
&= \int_{0}^{\infty} \varphi'(t)\mathbb{P}\Bigl(|T_{N,M}|>t\Bigr)\, dt\\
&\leq 4\int_0^\infty e^t(e^t-1) \exp(-\alpha_M t^2)\, dt
 \end{aligned}
\end{equation}
which converges to $0$ as $M\ra \infty$ by the dominated convergence theorem. Since the estimate in (\ref{ineq:just_before_dct}) does not depend on the size $N$, we are done. 
\end{proof}

\begin{corollary}[Fluctuations of tracial power series]\label{cor:free-fluctuation}
Suppose $c:\mathbb F_g\to \mathbb C$ is an admissible function, and \[\sum_{w\in\mathbb F_g} |c_w||w|<\infty.\]  Let \[ G_N:=\tr \sum_{w\in\mathbb F_g}c_wU^{(N)w}.\]
Assume in addition that $c_w=\overline{c_{w^{-1}}}$ (so that $G_N$ is real-valued). Then for all real numbers $t$ we have
\begin{equation}\label{eqn:fluctuation-formula}
\lim_{N\to \infty} \left( \log \mathbb E \left[ \exp (tG_N)\right]-t\mathbb E G_N\right) = \left(\frac12 \sum_{w\in\mathbb F_g} |c_w|^2|w|\right) t^2.
\end{equation}
In particular, the sequence $(G_N-\mathbb E G_N)_{N=1}^\infty$ converges in distribution to a normal random variable with mean $0$ and variance $\sigma^2=\sum_{w\in\mathbb F_g} |c_w|^2|w|.$
\end{corollary}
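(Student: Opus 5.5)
The plan is to apply Theorem~\ref{thm:a_free_szego} to the rescaled coefficient function $tc$ for each fixed real $t$, and then use the method of moments (via the moment generating function) for the distributional statement.

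First, for real $t$ the function $w\mapsto tc_w$ is still admissible and satisfies $\sum_w |tc_w||w|<\infty$. The associated random variable is $tG_N$. The proof of Theorem~\ref{thm:a_free_szego} shows that $\mathbb E G_N = Nc_\varnothing$. Indeed, for every nonempty cyclically reduced $w$, $\mathbb E\,\tr U^w=0$ by Haar invariance: multiply some $U_j$ occurring in $w$ by a scalar phase and use that the net exponent of $U_j$ in $w$ is nonzero, or else apply the vanishing of the trace on the free group algebra together with the exact invariance argument. The interchange of sum and expectation is justified by absolute convergence, since $|\tr U^w|\le N$. Hence
\[
\mathbb E\left[\exp(tG_N - t\mathbb E G_N)\right] = e^{-tNc_\varnothing}\,\mathbb E\left[\exp(tG_N)\right] \longrightarrow \exp\left(\frac{t^2}{2}\sum_w c_wc_{w^{-1}}|w|\right).
\]
This is exactly the content of the theorem in its $c_\varnothing=0$ reduction, which is in fact a genuine limit and not merely an asymptotic equivalence.

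Second, the hypothesis $c_{w^{-1}}=\overline{c_w}$ gives $c_wc_{w^{-1}}=|c_w|^2$, so the exponent becomes $\frac{t^2}{2}\sum_w|c_w|^2|w|$. The same hypothesis makes $G_N$ real, because $\tr U^{w^{-1}}=\overline{\tr U^w}$. Therefore $\mathbb E\exp(tG_N)>0$, the logarithm is the real logarithm, and taking logarithms of the positive limit gives \eqref{eqn:fluctuation-formula}.

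Third, for the distributional claim, the moment generating functions of the real random variables $G_N-\mathbb E G_N$ converge pointwise on all of $\mathbb R$ to $e^{\sigma^2t^2/2}$, which is the moment generating function of $N(0,\sigma^2)$. Curtiss's theorem (convergence of MGFs on a neighborhood of $0$ implies convergence in distribution) then yields the result. Alternatively, the uniform sub-Gaussian bound from Corollary~\ref{cor:com-cor} gives uniform integrability of all moments, so the moments converge and the Gaussian is determined by its moments.

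The only step needing real care is $\mathbb E G_N = Nc_\varnothing$ exactly, and not merely asymptotically. If that proves awkward, the fallback is to center directly: Theorem~\ref{thm:a_free_szego} was in fact proved for the centered variables, and for the $O(\sqrt N)$-Lipschitz centered variable $G_N-\mathbb EG_N$ the same second-order freeness and uniform integrability argument applies verbatim. The only change needed is to note that $\mathbb E\,\tr U^w\to 0$ for each nonempty $w$, which adds no extra term to the limiting Gaussian.
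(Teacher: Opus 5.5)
Your main line (apply Theorem~\ref{thm:a_free_szego} to $tc$, use $c_wc_{w^{-1}}=|c_w|^2$, then finish with Curtiss or the method of moments) is exactly the paper's one-line proof. It rests on the same identity $\mathbb E G_N=Nc_\varnothing$, and that identity fails in general once $g\ge 2$. Your justification does not establish it:

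\begin{itemize}
\item The phase trick $U_j\mapsto e^{i\theta}U_j$ kills $\mathbb E\,\tr U^w$ only when some letter has nonzero exponent sum in $w$.
\item For words in the commutator subgroup the trick gives nothing.
\item Your ``or else'' clause conflates $\tau(u^w)=0$, which is a limiting statement, with exact vanishing at finite $N$.
\end{itemize}

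Concretely, $w=\gamma_1\gamma_2\gamma_1^{-1}\gamma_2^{-1}$ is cyclically reduced. Averaging over $U_1$ first, using $\mathbb E_{U_1}[U_1AU_1^*]=\frac{\tr A}{N}I$, gives
\[
\mathbb E\,\tr(U_1U_2U_1^*U_2^*)=\tfrac1N\,\mathbb E|\tr U_2|^2=\tfrac1N\neq 0 .
\]
Take the admissible $c$ equal to $1$ on the orbits of $w$ and $w^{-1}$; it satisfies $c_w=\overline{c_{w^{-1}}}$. For this $c$ one gets $\mathbb E G_N=8/N\neq Nc_\varnothing$.

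So the equality in your first display is false, and the limit of its middle expression $e^{-tNc_\varnothing}\mathbb E[\exp(tG_N)]$ is not established. For infinitely supported $c$ that limit would additionally need $\sum_{w\neq\varnothing}c_w\,\mathbb E\,\tr U^w\to 0$. That in turn requires bounds on $\mathbb E\,\tr U^w$ that are uniform in $N$ and summable against $|c_w|$, which your argument does not supply.

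Your fallback is the correct argument and should be the main one. The left side of \eqref{eqn:fluctuation-formula} is exactly $\log\mathbb E\exp(t\tilde G_N)$ with $\tilde G_N=G_N-\mathbb E G_N$, so the value of $\mathbb E G_N$ never matters. You should rerun the proof of Theorem~\ref{thm:a_free_szego}, rather than cite it, with the centered pieces $\tilde H_{N,M}=H_{N,M}-\mathbb E H_{N,M}$ and $\tilde T_{N,M}=T_{N,M}-\mathbb E T_{N,M}$:

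\begin{itemize}
\item These pieces are exactly centered, with Lipschitz constants at most $|t|L\sqrt N$ and $|t|\delta_M\sqrt N$. So Corollary~\ref{cor:com-cor} applies as stated, giving both the uniform bound on $\mathbb E\exp(2t\tilde G_N)$ and the uniform-in-$N$ sub-Gaussian tail for $\tilde T_{N,M}$.
\item $\tilde H_{N,M}$ converges in distribution to a real centered Gaussian of variance $\sum_{|w|\le M}|c_w|^2|w|$. This holds because the Mingo--\'Sniady--Speicher result concerns covariances and higher cumulants of traces; equivalently, $\mathbb E\,\tr U^w\to 0$ for each of the finitely many $w$ involved.
\end{itemize}

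With that substitution the rest of your proposal, including the Curtiss step, goes through.
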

\begin{proof} Since $\mathbb E G_N= Nc_\varnothing$, (\ref{eqn:fluctuation-formula}) is immediate from the previous theorem applied to $tG_N$. The conclusion about convergence to a normal is then a standard application of the method of moments. 
\end{proof}

{\bf Question:} For admissible functions $c:\mathbb F_g\to \mathbb C$, can the hypothesis
\[
\sum_{w\in\mathbb F_g}|c_w||w|<\infty
\]
of Theorem~\ref{thm:a_free_szego} be weakened to 
\[
\sum_{w\in\mathbb F_g} |c_w| + \sum_{w\in\mathbb F_g} |c_w|^2|w| <\infty ?
\]

\section{Uniform Tail Bounds}\label{sec:uniform-tail-bounds}

A fundamental source of difficulty in the free setting is that, because the full and reduced $C^*$ algebras over the free groups (in two or more generators) are distinct, it is possible for a self-adjoint polynomial $p$ to be positive when evaluated on the free Haar unitaries $u$, but not positive when evaluated on unitary matrices. (This is in contrast to the one variable case, where the limiting Haar unitary $u$ is the operator of multiplication by $e^{i\theta}$ on $L^2(\mathbb T)$, and we always have the inclusion of spectra $\sigma(p(U,U^*))\subset \sigma (p(u,u^*))$.) The purpose of this section is to prove the technical theorem below, which says that, for our purposes, the contribution from these ``bad" matrices $U$, for which $p(U,U^*)$ has eigenvalues lying far outside the spectrum of $p(u,u^*)$, is asymptotically negligible. This is the essence of the strong convergence phenomenon, which enters the proof through Parraud's estimate (\ref{eq:parraud-eq}). 

\begin{theorem}[Uniform tail bounds]\label{thm:uniform-bound}
    Let $p\in M_k(\C)\lip x_,x^*\rip$ be a formally self-adjoint nc polynomial. Assume that
    \begin{enumerate}
        \item {[strict positivity]} $p(u,u^*)$ is positive and invertible in $M_k(\C)\otimes C_r^*(\mathbb F_g)$
        \item {[normalization]} $(\frac{1}{k}\tr_k\otimes \tau)(\log p(u,u^*)) = 0$
    \end{enumerate}
    Let $[a,b]\subset (0,\infty)$ be an interval containing the spectrum of $p(u,u^*)$, and let $f:\mathbb R\to \mathbb R$ be a smooth (at least $C^6$) function which agrees with $\log x$ on $[a,b]$. 
    For each integer $N\geq 1$ we consider the random variable
    \[F_N := \tr_{Nk} f(p(U^{(N)},U^{(N)*}))
    \]
    Then there exist constants $c>0, C>0$ (depending only on $p$ and $f$) such that
    \begin{equation}\label{eqn:subgaussian}
    \mathbb{P}\Bigl(|F_N|>t\Bigr)\leq C \exp(-ct^2) 
    \end{equation}
    for all $N$ and all $t>0$.

Moreover if $\mathcal P$ is a bounded family in the sense of Definition~\ref{def:bounded-family}, where each $p\in\mathcal P$ satisfies the hypotheses (1) and (2), and there is a single interval $[a,b]\subset (0,\infty)$ containing the spectrum of $p(u,u^*)$ for all $p\in \mathcal P$, then there are uniform constants $c,C$ such that \eqref{eqn:subgaussian} holds for all $p\in \mathcal P$. 
\end{theorem}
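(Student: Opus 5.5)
The plan is to split $F_N$ into its mean and a centered part. The centered part is handled by concentration of measure (Corollary~\ref{cor:com-cor}). The mean is controlled uniformly in $N$ by Parraud's estimate (Theorem~\ref{thm:parraud-asymptotics}) together with the normalization hypothesis.

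\emph{Lipschitz bound.} Since $f$ is $C^6$ we take it with bounded derivative, so $f$ is Lipschitz with some constant $L_f$. By Lemma~\ref{lem:lipschitz-on-sa},
\[
\|f(p(U,U^*))-f(p(V,V^*))\|_2\leq L_f\|p(U,U^*)-p(V,V^*)\|_2 .
\]
Expanding $p=\sum_w A_w\otimes x^w$ and arguing as in Lemma~\ref{eq:monomial-lipschitz-bound}, we get $\|p(U,U^*)-p(V,V^*)\|_2\leq \sqrt{k}\,\big(\sum_w\|A_w\||w|\big)\,d(U,V)$; the factor $\sqrt k$ comes from $\|A_w\otimes (U^w-V^w)\|_2\le \|A_w\|\,\|I_k\otimes(U^w-V^w)\|_2$. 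Combining this with $|\tr_{Nk}(X)|\leq\sqrt{Nk}\,\|X\|_2$, we conclude that $F_N$ is Lipschitz on $\mathcal U(N)^g$ with constant $L\sqrt N$, where $L=kL_f\sum_w\|A_w\||w|$ depends only on $p$ and $f$. Corollary~\ref{cor:com-cor} then gives
\[
\mathbb P(|F_N-\mathbb EF_N|>t)\leq 4e^{-t^2/24L^2}.
\]

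\emph{Bounding the mean.} Since $f=\log$ on $[a,b]\supset\sigma(p(u,u^*))$, the normalization hypothesis gives $(\frac1k\tr_k\otimes\tau)f(p(u,u^*))=0$. Theorem~\ref{thm:parraud-asymptotics} then yields
\[
|\mathbb EF_N| = kN\left|\mathbb E\tfrac{1}{kN}\tr f(p(U,U^*))-0\right|\leq \frac{k^3(\log N)^2}{N}\,C(p)\,\|f\|_{C^6}.
\]
This is bounded uniformly in $N$ (it tends to $0$), say by $M$. This is the step where strong convergence enters, and the $1/N^2$ rate in Parraud's bound is exactly what beats the factor $N$ coming from the unnormalized trace. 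I expect this to be the key point; everything else is routine. One caveat: $f$ must have finite $C^6$ norm on the range of $p(U,U^*)$. That range lies in a fixed compact set, since $\|p(U,U^*)\|\le\sum_w\|A_w\|$, so we may modify $f$ outside that set to make it bounded with bounded derivatives without changing $F_N$.

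\emph{Combining.} For $t\geq 2M$ we have $\mathbb P(|F_N|>t)\leq \mathbb P(|F_N-\mathbb EF_N|>t/2)\leq 4e^{-t^2/96L^2}$. For $t<2M$ the trivial bound $1\le e^{4M^2c}e^{-ct^2}$ holds. Hence \eqref{eqn:subgaussian} holds with $c=1/(96L^2)$ and $C=\max(4,e^{4cM^2})$.

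\emph{Uniform version.} For a bounded family $\mathcal P$, the Lipschitz constant $L$ is uniform because degrees and coefficient norms are uniformly bounded. The norms $\|p(U,U^*)\|$ are uniformly bounded, so a single modified $f$ works for every $p\in\mathcal P$. The Parraud constant is uniform by the ``moreover'' clause of Theorem~\ref{thm:parraud-asymptotics}, and the common interval $[a,b]$ makes the normalization argument go through for every $p\in\mathcal P$. Therefore $M$, $c$ and $C$ are uniform over $\mathcal P$.
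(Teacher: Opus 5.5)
Your proposal is correct and follows the paper's proof essentially step for step. You bound the Lipschitz constant of $F_N$ by $L\sqrt{N}$ and apply concentration of measure. You then use Parraud's estimate together with the normalization to get $|\mathbb{E}F_N|\le C(p)\|f\|_{C^6}k^3(\log N)^2/N$, which is bounded uniformly in $N$. Finally you adjust the constants, and uniformity over bounded families comes from the ``moreover'' clause of Parraud's theorem, exactly as in the paper.

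The differences are cosmetic. You center $F_N$ and apply Corollary~\ref{cor:com-cor} with a split at $t\ge 2M$, whereas the paper applies Theorem~\ref{thm:concentration-of-measure} directly to the uncentered $F_N$. You are also somewhat more careful than the paper about the $\sqrt{k}$ factor in the Lipschitz constant and about truncating $f$ outside $[-K,K]$ so that $\|f\|_{C^6}<\infty$.
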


\begin{proof}
For a fixed nc polynomial $p$, there is a uniform norm bound $\|p(U,U^*)\|\leq K$ over all unitary operators $U$, and since $f$ is smooth it is Lipschitz on $[-K,K]$. Lemmas~\ref{eq:monomial-lipschitz-bound} and \ref{lem:lipschitz-on-sa} then imply that $F_N$ is a Lipschitz function on the unitary group with Lipschitz constant $L\sqrt{N}$, where the constant $L$ depends only on the nc polynomial $p$ and is independent of $N$. More generally, for families $\mathcal P$ as in the hypothesis, we will still have a uniform norm bound $K$ over all $p\in \mathcal P$, and since there is a single interval $[a,b]\subset (0, \infty)$ containing all the spectra of the $p(u,u^*)$, we may fix a single smooth $f$ equal to $\log x$ on this $[a,b]$, and $f$ may be chosen to be six times boundedly differentiable on $\mathbb R$.  We also observe that for a family of polynomials of bounded degree with bounded coefficients (and fixed $f$), there is a uniform choice of $L$ valid for the whole family. \medskip

By concentration of measure (Theorem~\ref{thm:concentration-of-measure}) we have for all $N\geq 1$ and all $t>\E F_N$
\begin{align*}
    \mathbb{P}(F_N>t) &= \mathbb{P}(F_N> \mathbb{E}F_N+(t-\mathbb{E}F_N))\\[1.2em]
    &\leq \exp(\frac{-N(t-\E F_N)^2}{12(L\sqrt{N})^2})\\[1.2em]
    &= \exp(\frac{-(t-\E F_N)^2}{12L^2}).
\end{align*} 

Since $-F_N$ Lipschitz with the same constant it follows from the same computation that 
\begin{equation}\label{eqn:first-tail-estimate}
  \mathbb{P}\Bigl(|F_N| >t\Bigr)\leq 2\exp\Bigl(\frac{-(t-|\E F_N|)^2}{12L^2}\Bigr)
\end{equation}
for all $t>|\E F_N|$. 

It follows from Parraud's estimate Theorem~\ref{thm:parraud-asymptotics}, and our normalization $\tau(\log\,p(u,u^*))=0$, that for all $N$ sufficiently large
\begin{equation}
\left| \mathbb E F_N\right|=\left|  \E \left[\tr\,f(p(U^{},U^{*})\right]\right| \leq C(p)\|f\|_{C^6}\frac{\left(\log N\right)^2 k^3}{N}
\end{equation}
All that matters for our purposes is that the right hand side is $O(1)$ as $N\to \infty$, with a constant that depends only on $p$ and the choice of smooth function $f$. Moreover, given a bounded family $\mathcal P$, there is a {\em uniform} choice of constant $C$ valid for all $p\in \mathcal P$ (this follows from the proof of \cite[Lemma 4.4]{Parraud2022}, see remark following \cite[Theorem 4.1]{Parraud2022}). 

In particular we conclude that the expectations $\E F_N$ are uniformly bounded in $N$; let $M=\sup_N |\E F_N|$. By the remark above, this $M$ can be chosen uniformly over sets of polynomials of bounded degree with bounded coefficients. Applying (\ref{eqn:first-tail-estimate}) we obtain for all $N$ 
\begin{equation}\label{eq:parraud-consequence}
    \mathbb{P}\Bigl(|F_N| >t\Bigr) \leq 2\exp\Bigl(\frac{-(t-M)^2}{12L^2}\Bigr) \quad \text{for all } t>M
\end{equation}
Finally, by choosing $C>0$ sufficiently large and $c>0$ sufficiently small (depending only on $M$ and $L$), we may arrange that
\begin{equation*}
   \mathbb{P}\Bigl(|F_N| >t\Bigr) \leq C\exp\Bigl(-ct^2\Bigr) 
\end{equation*}
for all $N$ and all $t>0$. Since $M$ and $L$ can be chosen uniformly over bounded families, the constants $C,c$ can be chosen uniformly as well. 
\end{proof}

\section{Spectral Radius of Free Haar Unitary Pencils}\label{sec:spectral-radius}

The theorem of the last section requires, as one of its inputs, control over the spectrum of a self-adjoint polynomial in free Haar unitaries. In particular we will be applying it, in the next section, to polynomials of the form
\[
p(u,u^*)= L_X(u)L_X(u)^*
\]
where $L_X(u)=I_k\otimes 1 + \sum_{j=1}^g X_j\otimes u_j$. Here $X_1, \dots, X_g$ are fixed $k\times k$ matrices. 
Therefore, our goal in the present section is to compute the spectral radius 
\[\rho\left(\sum_{j=1}^g X_j\otimes u_j\right)\]
only in terms of the coefficient operators $X_1,...,X_g$. In fact we will be able to do this not just for matrix coefficients but for arbitrary bounded operators on Hilbert space $X_j\in B(\mathcal H)$. 

As one expects, this will be done using Gelfand's formula 

\[\rho\left(\sum_{j=1}^g X_j \otimes u_j\right) = \lim_{n\ra \infty}\left\|\left(\sum_{j=1}^g X_j \otimes u_j\right)^n \right\|^{\frac{1}{n}}.\]
We can multiply out the n-fold product of the sum $\sum X_j \otimes u_j$ to obtain
\[\left(\sum_{j=1}^g X_j \otimes u_j\right)^n  = \sum_{|w| = n}X^{w}\otimes u^{w}\]
where recall $X^{w}$ denotes a word in $X_1,...,X_{g}$ of length $|w|$, and $u^{w}$ a word in $u_1,...,u_{g}$. 

So, to compute the spectral radius we need estimates on the quantities 
\begin{equation}\label{eqn:degree-n-sum}
\left\|\sum_{|w| = n}X^{w}\otimes u^{w}\right\|.
\end{equation}

In 1979,  Haagerup (\cite{Haagerup1979}, Lemma 1.4) proved
    \[\left\|\sum_{w\in W_n}\alpha_w u^w\right\| \leq (n+1)(\sum_{w}|\alpha_w|^2)^{\frac{1}{2}}\]
    for any finitely supported $(\alpha_w):\mathbb{F}_g\ra \mathbb{C}$, where $W_n\subset \mathbb{F}_g$ denotes the set of all reduced words of length $n$.

In 1993, Haagerup and Pisier~\cite[Proposition~1.1]{HP1993} replaced $\mathbb{C}$ by an arbitrary $C^*$-algebra $\mathcal{A}$. They showed that
\begin{equation}\label{eq:haagerup-pisier-ineq-lower-bound}
\Bigl\| \sum_{w} \alpha_w \otimes u^w \Bigr\|
\;\geq\;
\max\Bigl\{
\Bigl\| \sum_{w} \alpha_w^* \alpha_w \Bigr\|^{\frac{1}{2}},
\;
\Bigl\| \sum_{w} \alpha_w \alpha_w^* \Bigr\|^{\frac{1}{2}}
\Bigr\}
\end{equation}
for any finitely supported family $(\alpha_w) \colon \mathbb{F}_d \to \mathcal{A}$.

Moreover, if $(\alpha_w)$ is supported on the set of generators and there inverses (i.e. words of length 1)
\[
\{ \gamma_1, \dots, \gamma_g, \gamma_1^{-1}, \dots, \gamma_g^{-1} \},
\]
then
\begin{equation}\label{eq:haagerup-pisier-ineq-upper-bound}
\Bigl\| \sum_{w} \alpha_w \otimes u^w \Bigr\|
\;\leq\;
2
\max\Bigl\{
\Bigl\| \sum_{w} \alpha_w^* \alpha_w \Bigr\|^{\frac{1}{2}},
\;
\Bigl\| \sum_{w} \alpha_w \alpha_w^* \Bigr\|^{\frac{1}{2}}
\Bigr\}.
\end{equation}
  
    So, Haagerup and Pisier's lower bounds~\eqref{eq:haagerup-pisier-ineq-lower-bound} immediately give lower estimates on (\ref{eqn:degree-n-sum}), but not upper estimates. However there is another generalization of Haagerup's inequality due to Buchholz \cite{Buchholz_1999}, which can also be found in \cite[Theorem 9.7.4]{Pisier_2003}, which provides an upper bound sufficient for our purpose.  

To state this inequality we need some notation.     
Let $X:\mathbb{F}_g\to \mathcal B(\mathcal H)$ be a finitely supported function and fix $n\in\mathbb{N}$.
For each $0\le k\le n$, define an operator-valued matrix
\[
X^{[n,k]}
=
\bigl(X^{[n,k]}_{v,w}\bigr)_{v\in W_k,\ w\in W_{n-k}}
\]
by
\[
X^{[n,k]}_{v,w}
:=
X(vw^{-1})\,\mathbf 1_{\{\,|vw^{-1}|=n\,\}} .
\]
(Here as before $W_m$ denotes the set of reduced words of length $m$. The rows of the matrix are indexed by $v\in W_k$, the columns by $w\in W_{n-k}$, and the $(v,w)$ entry is given by the above formula.) Equivalently, $X^{[n,k]}$ defines a bounded operator
\[
X^{[n,k]}:\ell^2(W_{n-k},\mathcal H)\to \ell^2(W_k,\mathcal H)
\]
acting by
\begin{equation}\label{eqn:originial-xnk-def}
(X^{[n,k]}f)(v)
=
\sum_{w\in W_{n-k}}
X(vw^{-1})\mathbf 1_{\{|vw^{-1}|=n\}}\,f(w),
\qquad v\in W_k .
\end{equation}
We let $\| X\|_{[n,k]}$ we denote the norm of this operator $X^{[n,k]}$.
Two special cases occur when $k=0$ and $k=n$. In both cases the length condition $|vw^{-1}|=n$ is automatic, so the matrices reduce to the row and column operators formed by the coefficients of $X$ indexed by words of length $n$.

\begin{theorem}[\cite{Buchholz_1999}]\label{thm:buchholz-haagerup}
 Let $\mathcal H$ be a Hilbert space, and let $X:\mathbb{F}_g\ra \mathcal B(\mathcal H)$ be a finitely supported function. Then, for every $n \in \mathbb N$,
    \[\left\| \sum\limits_{w \in W_n}X(w)\otimes u^w \right\| \leq (n+1)\max_{0\leq k\leq n}\Bigl\{\| X\|_{[n,k]}\Bigr\}\]
\end{theorem}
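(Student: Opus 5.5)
The plan is to follow Haagerup's original combinatorial proof, carrying operator coefficients along. Put $T:=\sum_{w\in W_n}X(w)\otimes u^w$, acting on $\mathcal H\otimes\ell^2(\mathbb F_g)=\ell^2(\mathbb F_g,\mathcal H)$. I would split $T=\sum_{k=0}^n T_k$ according to how much cancellation occurs, and then prove $\|T_k\|\le\|X\|_{[n,k]}$ for each $k$. The factor $(n+1)$ then comes from the triangle inequality.

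\emph{Step 1 (decomposition).} Fix $w\in W_n$ and a reduced word $h$. Exactly $n-k$ letters of $w$ cancel against the beginning of $h$ for a unique $k\in\{0,\dots,n\}$. Then there are unique factorizations
\[
w=vu^{-1},\qquad h=uh',
\]
with the following properties:
\begin{itemize}
\item $|v|=k$ and $|u|=n-k$;
\item $|vu^{-1}|=n$ and $|uh'|=|u|+|h'|$;
\item the product $vh'$ is reduced, i.e.\ the last letter of $v$ does not cancel the first letter of $h'$ (this condition is vacuous if $k=0$ or $h'=\varnothing$).
\end{itemize}
In that case $u^w\delta_h=\delta_{vh'}$. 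Define $T_k$ on a vector $\xi=\sum_h\xi_h\otimes\delta_h$ by keeping only the contributions with this value of $k$:
\[
(T_k\xi)_{vh'}=\sum_{u\in W_{n-k}} X(vu^{-1})\,\mathbf 1_{\{|vu^{-1}|=n\}}\,\mathbf 1_{\{uh' \text{ reduced}\}}\,\xi_{uh'},
\]
for $v\in W_k$ and $h'$ such that $vh'$ is reduced. One checks directly that $T=\sum_k T_k$, at first on finitely supported $\xi$.

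\emph{Step 2 (bounding $T_k$).} Fix $k$ and group the output coordinates by $h'$. An output word $g=vh'$ with $|v|=k$ determines $v$ (its first $k$ letters) and $h'$ uniquely. Hence the output vectors attached to different pairs $(v,h')$ are orthogonal, and
\[
\|T_k\xi\|^2=\sum_{h'}\sum_{v}\Bigl\|\sum_u X(vu^{-1})\mathbf 1_{\{|vu^{-1}|=n\}}\,\mathbf 1\,\xi_{uh'}\Bigr\|^2 .
\]
For fixed $h'$, the inner expression is $P_{h'}X^{[n,k]}Q_{h'}f_{h'}$, where:
\begin{itemize}
\item $f_{h'}(u):=\xi_{uh'}$, an element of $\ell^2(W_{n-k},\mathcal H)$;
\item $Q_{h'}$ and $P_{h'}$ are the diagonal projections onto the $u\in W_{n-k}$ with $uh'$ reduced and onto the $v\in W_k$ with $vh'$ reduced, respectively.
\end{itemize}
Since compressions by projections do not increase norm,
\[
\|T_k\xi\|^2\le\|X\|_{[n,k]}^2\sum_{h'}\sum_{u:\,uh'\text{ reduced}}\|\xi_{uh'}\|^2 .
\]
Every $h$ with $|h|\ge n-k$ factors uniquely as $uh'$ with $|u|=n-k$ (take $u$ to be its first $n-k$ letters), and words with $|h|<n-k$ do not appear at all. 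So the double sum is at most $\|\xi\|^2$, which gives $\|T_k\|\le\|X\|_{[n,k]}$.

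\emph{Step 3 (conclusion).} The triangle inequality gives
\[
\|T\|\le\sum_{k=0}^n\|X\|_{[n,k]}\le(n+1)\max_k\|X\|_{[n,k]},
\]
first on finitely supported vectors and then on all of $\ell^2(\mathbb F_g,\mathcal H)$ by density.

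I expect the main obstacle to be the bookkeeping in Step 1. One must verify that the three conditions in the factorization characterize exactly the pairs $(w,h)$ with exactly $n-k$ cancellations, with the edge cases $k=0$, $k=n$ and $h'=\varnothing$ handled correctly. One must also check that the pieces $T_k$ really sum to $T$ with no double counting. Once the factorization is set up cleanly, Step 2 is a routine orthogonality and compression argument.
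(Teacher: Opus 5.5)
Your proof is correct. The paper gives no argument for this statement; it simply imports it from \cite{Buchholz_1999} (via Pisier's book). So what you have written is a self-contained proof of the cited result, and in substance it is the standard Haagerup-type argument.

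You split $\sum_{w\in W_n}X(w)\otimes u^w$ according to the number $n-k$ of cancelled letters. After fixing the uncancelled tail $h'$, the $k$-th piece acts as the compression $P_{h'}X^{[n,k]}Q_{h'}$ of exactly the block matrix in \eqref{eqn:originial-xnk-def}.

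The bookkeeping you were worried about checks out. The map $(v,u,h')\mapsto(vu^{-1},uh')$ is a bijection from triples with $v\in W_k$, $u\in W_{n-k}$ and $vu^{-1}$, $uh'$, $vh'$ all reduced, onto pairs $(w,h)$ with $w\in W_n$ and exactly $n-k$ cancellations in $wh$. The edge cases $k=0$, $k=n$ and $h'=\varnothing$ are covered because the corresponding reducedness conditions become vacuous. The orthogonality and reindexing in Step 2 are also right: each output word of length at least $k$, and each input word of length at least $n-k$, splits uniquely as prefix times tail.

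Compared with the paper's bare citation, your route buys transparency and a slightly sharper bound, $\|T\|\le\sum_{k=0}^n\|X\|_{[n,k]}$. The factor $(n+1)$ is then only the crude passage to the maximum.

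The one change I would make is notational. You use $u$ both for the cancelled word in $W_{n-k}$ and for the free Haar unitaries in $u^w$, which collides with the statement being proved, so rename the cancelled word.
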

\begin{remark}
    This recovers the Haagerup-Pisier inequality~\eqref{eq:haagerup-pisier-ineq-upper-bound} in the case $n=1$. 
\end{remark}
    
It will be convenient for us to work with a slightly different operator whose norm will be the same as that of $X^{[n,k]}$. For our purposes we can restrict ourselves to the special case of functions $X: \mathbb F_g\to \mathcal B(\mathcal H)$ supported on the monoid $\mathbb F_g^+$, that is, on words which involve only the generators $\gamma_1, \dots,\gamma_g$ and not their inverses. In this case we redefine $W_k$ to be words of length $k$ in $\mathbb F_g^+$. When using $W_k$ to index a sum we always give it the lexicographic order. We now redefine $X^{[n,k]}$ as     
 \begin{align}\label{new-xnk-def}
(X^{[n,k]}f)(v)&=\sum_{w\in W_{n-k}} X(vw)\mathbf 1_{\{|vw|=n\}}\,f(w)\\
&= \sum_{w\in W_{n-k}} X(vw)f(w),  \qquad v\in W_k 
\end{align}
It is a simple matter to verify that for functions $X$ supported on $\mathbb F_g^+$, the norm of this operator is the same as the norm of the operator defined in (\ref{eqn:originial-xnk-def}).
(Essentially, this amounts to applying the unitary change of variable $f(w)\to f(w^{-1})$ on the domain side.)

We denote the edge cases $X^{[n,0]}$ and $X^{[0,n]}$ by $X_n^{row}$ and $X_n^{col}$ respectively; observe that these operators are represented as block matrices formed by listing the operators $X(w)$ over all $w\in W_n$ in a row or column respectively (again, always in lexicographic order). We apply the above constructions to the special case where the function $X(w)$ is defined to be $X^w$ for a fixed $g$-tuple of operators $X=(X_1, \dots, X_g)$ on a Hilbert space $\mathcal H$. In this instance, the operators $X_k^{col}$ and $X_k^{row}$ are the operators with block matrices formed by placing all the monomials $X^{w}$, $|w|=k$, in a column or row respectively. An inspection of our revised definition of $X^{[n,k]}$ shows that in this special case, for each $k=0, \dots , n$, the operator $X^{[n,k]}$ may be factored as 
\[
X^{[n,k]} = X_k^{col} X_{n-k}^{row}.  
\]
We use the notation $\|X\|_{C^kR^{n-k}}:= \|X^{[n,k]}\|$ in this case. From the above factorization we deduce the inequality
\[
\|X\|^{[n,k]}=\|X\|_{C^kR^{\,n-k}}\leq \|X_k^{col}\| \|X_{n-k}^{row}\|,
\]
for all $0\leq k\leq n$. 
 In particular the resulting norms are the {\it $k$-fold column} and {\it $k$-fold row} norms of the tuple $(X_1, \dots, X_g)$:
\[
\|X\|_{C_k}:= \|X_k^{col}\| = \left\| \sum_{|w|=k} X^{w*}X^{w}\right\|^{1/2}; \quad \|X\|_{R_k}:= \|X_k^{row}\| = \left\| \sum_{|w|=k} X^{w}X^{w*}\right\|^{1/2}.
\]
In this special case, the Buchholz and Haagerup-Pisier inequalities combine to give: 
\begin{proposition}[Buchholz and Haagerup-Pisier inequalities, special case]\label{prop:Buchholz-Haagerup-Pisier-ineq}
For operators $X_1, \dots, X_g\in\mathcal B(\mathcal H)$, and free Haar unitaries $u_1, \dots, u_g$, we have for each integer $n\geq 1$
\begin{equation}\label{eqn:buchholz-redux}
\max \{\|X\|_{C_n}, \|X\|_{R_n} \} \leq \left\|\sum_{|w|=n} X^w \otimes u^w\right\| \leq (n+1)\max_{0\leq k\leq n}\{ \|X\|_{C^k}\|X\|_{R^{n-k}}\}.
\end{equation}
\end{proposition}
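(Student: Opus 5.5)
The statement to prove is Proposition~\ref{prop:Buchholz-Haagerup-Pisier-ineq}. Both inequalities in \eqref{eqn:buchholz-redux} will be obtained by specializing the general results already quoted to the function $X(w)=X^w$ supported on $W_n\subset\mathbb F_g^+$ and zero elsewhere. This function is finitely supported, so the Haagerup--Pisier lower bound \eqref{eq:haagerup-pisier-ineq-lower-bound} and Buchholz's Theorem~\ref{thm:buchholz-haagerup} both apply with $\mathcal A=\mathcal B(\mathcal H)$.

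For the lower bound, I will apply \eqref{eq:haagerup-pisier-ineq-lower-bound} with $\alpha_w=X^w$ for $|w|=n$ and $\alpha_w=0$ otherwise. The two quantities on its right-hand side are
\[
\Bigl\|\sum_{|w|=n}X^{w*}X^w\Bigr\|^{1/2}=\|X_n^{col}\|=\|X\|_{C_n}, \qquad \Bigl\|\sum_{|w|=n}X^wX^{w*}\Bigr\|^{1/2}=\|X_n^{row}\|=\|X\|_{R_n}.
\]
These identities follow from $\|T\|^2=\|T^*T\|$ for a column, and $\|T\|^2=\|TT^*\|$ for a row.

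For the upper bound, Theorem~\ref{thm:buchholz-haagerup} gives $(n+1)\max_k\|X\|_{[n,k]}$, with $X^{[n,k]}$ defined by \eqref{eqn:originial-xnk-def}. Two points need checking, and I expect the first to be the only real obstacle.

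\begin{enumerate}
\item The norm of the operator in \eqref{eqn:originial-xnk-def} must equal the norm of the redefined operator \eqref{new-xnk-def}. Take $v\in W_k$ and $w\in W_{n-k}$, where $W_m$ denotes all reduced words of length $m$ in $\mathbb F_g$. Because $X$ is supported on positive words of length $n$, the entry $X(vw^{-1})\mathbf 1_{\{|vw^{-1}|=n\}}$ is nonzero only when $vw^{-1}$ is reduced of length $n$ and positive. That forces $v\in\mathbb F_g^+$ and $w^{-1}\in\mathbb F_g^+$. So all rows indexed by non-positive $v$, and all columns indexed by $w$ with $w^{-1}$ non-positive, vanish. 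Deleting zero rows and columns does not change the norm. Reindexing columns by $w'=w^{-1}\in W^+_{n-k}$ is a unitary change of variables, and it produces exactly $X(vw')$ with $v,w'$ positive. This is the operator \eqref{new-xnk-def}.
\item The redefined operator factors as $X^{[n,k]}=X_k^{col}X_{n-k}^{row}$. This holds because $X^{vw'}=X^vX^{w'}$, so the $(v,w')$ block of the product column-times-row is exactly $X^vX^{w'}$.
\end{enumerate}

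Submultiplicativity then gives $\|X\|_{[n,k]}\le\|X\|_{C_k}\|X\|_{R_{n-k}}$. Taking the maximum over $0\le k\le n$ completes the upper bound. The cases $k=0$ and $k=n$ are consistent once we use the convention that $X_0^{col}$ and $X_0^{row}$ are the identity (the empty word), so that $\|X\|_{C_0}=\|X\|_{R_0}=1$.
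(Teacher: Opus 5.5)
Your proof is correct and follows the same route as the paper. The lower bound is the Haagerup--Pisier inequality with $\alpha_w=X^w$, and the upper bound is Buchholz's inequality combined with the reindexing $w\mapsto w^{-1}$, the factorization $X^{[n,k]}=X_k^{col}X_{n-k}^{row}$, and submultiplicativity. Your explicit check that all rows and columns indexed by non-positive words vanish, so the reindexed operator has the same norm, fills in the step the paper only calls a ``simple matter to verify.''
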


Next, for a $g$-tuple of bounded operators $X_1, \dots, X_g$ acting in a Hilbert space $\mathcal H$, we define two (completely positive) linear maps on $\mathcal B(\mathcal H)$
\[
\Phi_{row}^X(T) = \sum_{j=1}^g X_jTX_j^*, \quad \Phi_{col}^X(T) = \sum_{j=1}^g X_j^*TX_j.
\]
Since the maps are completely positive we have $\|\Phi_{row}^X\| = \|\Phi_{row}^X(I)\| = \|\sum_{j=1}^g X_jX_j^*\|=\|X\|_{row}^2$ and similarly $\|\Phi_{col}^X\| = \|X\|_{col}^2$; iterating we find
\[
\| (\Phi_{row}^X)^n\| = \|X\|_{R_n}^2, \quad \| (\Phi_{col}^X)^n\| = \|X\|_{C_n}^2.
\]

We then define the {\em row} and {\em column} spectral radii of $X$ to be
\begin{equation*}\label{def:col-row-spectral-radii}
    \rho_{row}(X) = [\rho(\Phi_{row}^X)]^{1/2}, \quad \rho_{col}(X) = [\rho(\Phi_{col}^X)]^{1/2}.
\end{equation*}
We note that from the Gelfand spectral radius formula we have
\begin{equation}\label{eq:col-row-spectral-radius}
\rho_{row} (X) =\lim_{n\to \infty} \|X\|_{R_n}^{1/n}, \quad \rho_{col} (X) =\lim_{n\to \infty} \|X\|_{C_n}^{1/n}.
\end{equation}

The quantities $\rho_{row}(X)$ and $\rho_{col}(X)$ were studied independently, in various guises, by Bunce \cite{BunceOuterSpec1984}, Popescu \cite{PopescuOuterSpec2014}, Pascoe \cite{pascoe2019outerspectralradiusdynamics}, and Shalit and Shamovich  \cite{ShalitMosheShamovich2025}.

In the case of matrix tuples (that is, when $\mathcal H$ is finite dimensional), the two spectral radii coincide, and are equal to the so-called {\em outer spectral radius} (\cite{pascoe2019outerspectralradiusdynamics}): 
\[
\rho_{out} (X):= \rho \left(\sum_{j=1}^g \overline{X_j}\otimes X_j\right)^{1/2}.
\]
For the reader's convenience we give a short proof of this fact. 
\begin{lemma}\label{lem:radius-haar-pencils}
Let $X_1,...,X_d\in M_k(\mathbb{C})$. Then  \[\rho_{col}(X)=\rho_{row}(X)= \rho_{out}(X).\]
\end{lemma}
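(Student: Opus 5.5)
We will prove the lemma by identifying both completely positive maps $\Phi_{row}^X$ and $\Phi_{col}^X$, acting on the finite-dimensional space $M_k(\mathbb C)$, with explicit $k^2\times k^2$ matrices. Then we compare their spectra with that of $\sum_j \overline{X_j}\otimes X_j$. Throughout we use the spectral-radius definitions $\rho_{row}(X)=[\rho(\Phi_{row}^X)]^{1/2}$ and $\rho_{col}(X)=[\rho(\Phi_{col}^X)]^{1/2}$. Since $\mathcal H=\mathbb C^k$, these are spectral radii of linear operators on $M_k(\mathbb C)\cong\mathbb C^{k^2}$, so the whole argument is finite-dimensional linear algebra.

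\textbf{Step 1 (vectorization).} Let $\mathrm{vec}:M_k(\mathbb C)\to\mathbb C^{k}\otimes\mathbb C^k$ be the linear isomorphism $\mathrm{vec}(e_ae_b^*)=e_b\otimes e_a$. The standard identity is $\mathrm{vec}(ATB)=(B^{t}\otimes A)\,\mathrm{vec}(T)$, and it can be checked on matrix units. It gives
\[
\mathrm{vec}\circ\Phi_{row}^X\circ\mathrm{vec}^{-1}=\sum_{j=1}^g (X_j^*)^{t}\otimes X_j=\sum_{j=1}^g \overline{X_j}\otimes X_j .
\]
Hence $\Phi_{row}^X$ is similar to $\sum_j\overline{X_j}\otimes X_j$, and the two have the same spectrum. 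In particular $\rho_{row}(X)^2=\rho\bigl(\sum_j\overline{X_j}\otimes X_j\bigr)=\rho_{out}(X)^2$.

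\textbf{Step 2 (column map).} In the same way, $\Phi_{col}^X$ corresponds to $\sum_j X_j^{t}\otimes X_j^*$. This matrix is the conjugate transpose of $\sum_j \overline{X_j}\otimes X_j$, because $(\overline{X_j}\otimes X_j)^*=X_j^{t}\otimes X_j^*$. A matrix and its adjoint have complex-conjugate spectra, so their spectral radii agree, and we get $\rho_{col}(X)=\rho_{out}(X)$.

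An alternative for Step 2 is to note that $\Phi_{col}^X$ is the Hilbert--Schmidt adjoint of $\Phi_{row}^X$, since $\tr(\Phi_{row}^X(S)^*T)=\tr(S^*\Phi_{col}^X(T))$. This gives the equality of spectral radii directly.

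\textbf{Step 3 (compatibility and ordering).} Finally we check that $\rho(\sum_j\overline{X_j}\otimes X_j)=\rho(\sum_j X_j\otimes\overline{X_j})$, so that the statement matches the form used in Proposition~\ref{prop:intro-spectral-radius}. The two matrices differ by conjugation with the flip unitary on $\mathbb C^k\otimes\mathbb C^k$, so they are similar.

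The only step needing care is the bookkeeping in the vectorization identity, namely getting the transpose and conjugate in the right slots. Once that is verified on matrix units, everything else is immediate, and there is no real obstacle.
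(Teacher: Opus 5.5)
Your proof is correct and follows essentially the paper's route. The paper also uses the Hilbert--Schmidt adjointness $(\Phi_{row}^X)^*=\Phi_{col}^X$ to get $\rho_{row}(X)=\rho_{col}(X)$, and then identifies $\sum_j \overline{X_j}\otimes X_j$ as the matrix of $\Phi_{row}^X$ in a suitable basis by citing Pascoe, which is exactly the vectorization identity you verify explicitly in Step 1. Your Step 3 is harmless but not needed, since $\rho_{out}$ is defined with the factors in the order $\overline{X_j}\otimes X_j$.
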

\begin{proof}
Consider $\Phi^X_{\mathrm{row}}$ and $\Phi^X_{\mathrm{col}}$ as operators on
$M_k(\mathbb{C})$ equipped with the Hilbert–Schmidt inner product
\[
\langle A,B\rangle=\tr(B^*A).
\]
For all $A,B\in M_k(\mathbb{C})$ we compute
\[
\begin{aligned}
\langle \Phi^X_{\mathrm{row}}(A),B\rangle
&=\tr\Bigl(B^*\sum_i X_i A X_i^*\Bigr)
= \sum_i \tr\bigl(X_i^* B^* X_i A\bigr) \\
&=\tr\Bigl(\Bigl(\sum_i X_i^* B X_i\Bigr)^* A\Bigr)
= \langle A,\Phi^X_{\mathrm{col}}(B)\rangle,
\end{aligned}
\]
by the cyclicity of the trace. Thus
\[
(\Phi^X_{\mathrm{row}})^*=\Phi^X_{\mathrm{col}}.
\]
Since spectral radius is preserved under taking adjoints, it follows that
\[
\rho(\Phi^X_{\mathrm{row}})
=\rho\bigl((\Phi^X_{\mathrm{row}})^*\bigr)
=\rho(\Phi^X_{\mathrm{col}}).
\]
And the lemma follows from Pascoe's \cite{pascoe2019outerspectralradiusdynamics} computation of the outer spectral radius. (This comes down to checking that $\sum_{j=1}^g \overline{X_j}\otimes X_j$ is the matrix of $\Phi_{row}^X$ in a suitable basis.)
\end{proof}

\begin{lemma}\label{lem:roc-lemma}
    Let $a_n,b_n\geq 0$ be sequences of real numbers, and suppose that 
    \[\limsup a_n^{1/n}=\alpha, \quad \limsup b_n^{1/n}=\beta.\] 
    Define
    \[
    c_n:=\max\{a_k b_{n-k} : 0\leq k\leq n\}. 
    \]
    Then \[\limsup c_n^{1/n} \leq \max\{\alpha,\beta\}.\]. 
\end{lemma}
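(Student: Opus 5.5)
Fix $\varepsilon>0$ and put $\gamma:=\max\{\alpha,\beta\}+\varepsilon$; the idea is to bound $a_k$ and $b_{n-k}$ separately by geometric sequences with ratio close to $\gamma$, and then note that the product of the two bounds does not depend on $k$. If $\alpha$ or $\beta$ is $+\infty$ there is nothing to prove, so assume both are finite.

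From the definition of $\limsup$ there is an index $K$ with $a_k\le(\alpha+\varepsilon)^k\le\gamma^k$ and $b_k\le(\beta+\varepsilon)^k\le\gamma^k$ for all $k\ge K$. The finitely many initial terms are then absorbed into a constant. Set
\[
A:=\max_{0\le k<K}\frac{a_k}{\gamma^k}, \qquad B:=\max_{0\le k<K}\frac{b_k}{\gamma^k},
\]
which are finite because $\gamma>0$. Then for every $k\ge0$ we have $a_k\le A'\gamma^k$ and $b_k\le B'\gamma^k$, where $A'=\max\{A,1\}$ and $B'=\max\{B,1\}$.

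For any $0\le k\le n$ this gives
\[
a_kb_{n-k}\le A'B'\gamma^k\gamma^{n-k}=A'B'\gamma^n.
\]
The right side does not depend on $k$, so taking the maximum over $k$ yields $c_n\le A'B'\gamma^n$. Hence
\[
c_n^{1/n}\le (A'B')^{1/n}\gamma\longrightarrow\gamma,
\]
so $\limsup c_n^{1/n}\le\max\{\alpha,\beta\}+\varepsilon$. Since $\varepsilon>0$ was arbitrary, the claim follows.

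The only point needing care is making the bounds hold uniformly in $k$, including small $k$ and small $n-k$, since the maximum always involves one early term of one of the sequences. Folding the early terms into the constants $A',B'$ is exactly what handles this, and because the constants are raised to the power $1/n$ they do not affect the limit. It is also why we use $\gamma$, and not the separate rates $\alpha+\varepsilon$ and $\beta+\varepsilon$, as the common ratio.
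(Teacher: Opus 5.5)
Your proof is correct, but it takes a different route from the paper.

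The paper's argument uses generating functions:
\begin{enumerate}
\item It bounds the maximum by the full convolution, $c_n\le\sum_{k=0}^n a_kb_{n-k}$.
\item It recognizes this sum as the $n$th coefficient of the Cauchy product $a(x)b(x)$, where $a(x)=\sum a_nx^n$ and $b(x)=\sum b_nx^n$.
\item It invokes the standard fact that a product of absolutely convergent power series converges on the smaller of the two discs. So the product has radius at least $\min\{\alpha^{-1},\beta^{-1}\}$, and Cauchy--Hadamard gives the bound.
\end{enumerate}

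You instead dominate each sequence by a geometric sequence with the same ratio, $a_k\le A'\gamma^k$ and $b_k\le B'\gamma^k$ with $\gamma=\max\{\alpha,\beta\}+\varepsilon$. This makes $a_kb_{n-k}\le A'B'\gamma^n$ uniformly in $k$. Your estimate is essentially the one hidden inside the radius-of-convergence fact the paper quotes, written out in a self-contained way. It also handles the early terms explicitly (including the $k=0$ and $k=n$ factors), as well as the degenerate cases $\alpha=0$ and $\alpha$ or $\beta$ infinite.

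The paper's version is shorter once the power-series facts are taken for granted. It also directly controls the convolution sums $\sum_k a_kb_{n-k}$, not just their maximum. Your argument yields that stronger statement at no extra cost, since $\sum_{k=0}^n a_kb_{n-k}\le (n+1)A'B'\gamma^n$ and $(n+1)^{1/n}\to 1$.
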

\begin{proof}
    Without loss of generality we may assume $\beta\leq \alpha<\infty$. Consider the power series $a(x)=\sum_{n=1}^\infty a_nx^n$, $b(x)=\sum_{n=1}^\infty b_nx^n$. By hypothesis the series have positive radii of convergence $\alpha^{-1}, \beta^{-1}$ respectively. Since the coefficients are positive, the series are absolutely convergent within these radii, and their Cauchy product 
    \[
    c(x)=a(x)b(x) = \sum_{n=0}^\infty \left( \sum_{k=0}^n a_kb_{n-k}\right) x^n
    \]
    has radius of convergence at least $\alpha^{-1}$. This means that 
    \[
    \limsup c_n^{1/n}\leq\limsup \left(\sum_{k=0}^n a_kb_{n-k}\right)^{1/n} \leq \alpha. 
    \]
\end{proof}

We are now ready to prove the main theorem of this section:
 \begin{theorem}\label{thm:spectral-radius}
        
If  $u_1, \dots, u_g$ are free Haar unitaries and $X_1, \dots, X_g$ is a $g$-tuple of bounded operators acting in a Hilbert space $\mathcal H$, then  \[\rho\left(\sum\limits_{i=1}^g X_j\otimes u_j\right) = \max\Bigl\{\rho_{row}(X), \;\rho_{col}(X)\Bigr\}.\]
In particular, if the $X_j$ are matrices, 
 \[\rho\left(\sum\limits_{j=1}^g X_j\otimes u_j\right) = \rho\left(\sum_{j=1}^g \overline{X_j} \otimes X_j\right)^{1/2}.\]
\end{theorem}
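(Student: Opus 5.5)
We compute the spectral radius with Gelfand's formula and squeeze the $n$-th root of $\|(\sum_j X_j\otimes u_j)^n\|$ between the two sides of Proposition~\ref{prop:Buchholz-Haagerup-Pisier-ineq}. Expanding the $n$-th power, and using that $u^w$ for $w\in\mathbb F_g^+$ is just the product of the $u_j$'s (no cancellation occurs among positive words), we get
\[
\Bigl(\sum_{j=1}^g X_j\otimes u_j\Bigr)^n=\sum_{w\in W_n} X^w\otimes u^w ,
\]
where $W_n$ now denotes positive words of length $n$. This is exactly the operator controlled by \eqref{eqn:buchholz-redux}, applied to the function $w\mapsto X^w$ supported on $\mathbb F_g^+$.

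For the lower bound, the left inequality in \eqref{eqn:buchholz-redux} gives $\|(\sum_j X_j\otimes u_j)^n\|^{1/n}\ge \max\{\|X\|_{C_n}^{1/n},\|X\|_{R_n}^{1/n}\}$. Letting $n\to\infty$ and using \eqref{eq:col-row-spectral-radius} yields $\rho(\sum_j X_j\otimes u_j)\ge \max\{\rho_{row}(X),\rho_{col}(X)\}$. The limits in \eqref{eq:col-row-spectral-radius} exist by Gelfand's formula applied to $\Phi^X_{row},\Phi^X_{col}$, since $\|(\Phi^X_{row})^n\|=\|X\|_{R_n}^2$. This identity itself rests on complete positivity: the norm is attained at $I$, and $(\Phi_{row}^X)^n(I)=\sum_{|w|=n}X^wX^{w*}$.

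For the upper bound, the right inequality gives $\|(\sum_j X_j\otimes u_j)^n\|^{1/n}\le (n+1)^{1/n} c_n^{1/n}$ with $c_n=\max_{0\le k\le n} a_k b_{n-k}$, where $a_k=\|X\|_{C_k}$ and $b_k=\|X\|_{R_k}$ (with $a_0=b_0=1$). Since $(n+1)^{1/n}\to 1$, Lemma~\ref{lem:roc-lemma} with $\alpha=\rho_{col}(X)$ and $\beta=\rho_{row}(X)$ gives $\limsup c_n^{1/n}\le\max\{\alpha,\beta\}$. The lemma's proof uses power series starting at $n=1$, but including the $n=0$ term $a_0=1$ does not change the radii of convergence, so it still applies. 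One degenerate case needs a remark: if $\alpha=0$, the argument via radii of convergence still gives $\limsup c_n^{1/n}\le\max\{\alpha,\beta\}$, since both series are then entire (or have radius $\ge\beta^{-1}$). Combining the two bounds proves the first formula.

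The matrix case then follows immediately from Lemma~\ref{lem:radius-haar-pencils}, which gives $\rho_{row}(X)=\rho_{col}(X)=\rho(\sum_j\overline{X_j}\otimes X_j)^{1/2}$. If one wants the ordering $\sum_j X_j\otimes\overline{X_j}$ instead, it is unitarily similar to $\sum_j\overline{X_j}\otimes X_j$ via the flip of tensor factors, so it has the same spectral radius.

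The only real work lies in the inputs: the Buchholz inequality and the factorization $X^{[n,k]}=X_k^{col}X_{n-k}^{row}$, which yields $\|X\|_{[n,k]}\le\|X\|_{C_k}\|X\|_{R_{n-k}}$. Both have already been set up earlier in the paper. The main point requiring care is checking that the reindexed operator \eqref{new-xnk-def} has the same norm as Buchholz's \eqref{eqn:originial-xnk-def}, which follows from the unitary substitution $f(w)\mapsto f(w^{-1})$. Beyond that, the proof is a routine $n$-th root limit.
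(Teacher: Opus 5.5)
Your proposal is correct and follows essentially the same route as the paper: Gelfand's formula, squeezed between the Haagerup--Pisier lower bound and the Buchholz upper bound, with Lemma~\ref{lem:roc-lemma} handling the maximum over $k$ and Lemma~\ref{lem:radius-haar-pencils} giving the matrix case. Your extra remarks are correct and only clean up small points the paper passes over: the $n=0$ terms with $a_0=b_0=1$, the degenerate case $\alpha=0$, and the flip similarity between $\sum_j X_j\otimes\overline{X_j}$ and $\sum_j\overline{X_j}\otimes X_j$.
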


\begin{proof}
The lower bound 
\[
 \rho\left(\sum\limits_{j=1}^g X_j\otimes u_j\right) \geq \max\Bigl\{\rho_{row}(X), \;\rho_{col}(X)\Bigr\}
\]
follows from taking $n^{th}$ roots and limits in the Haagerup--Pisier
lower bound~\eqref{eqn:buchholz-redux}, together with \eqref{eq:col-row-spectral-radius}.

Similarly, the corresponding upper bound follows from the upper bound in \eqref{eqn:buchholz-redux} and Lemma~\ref{lem:roc-lemma} applied with $a_n=\|X\|_{C_n}, b_n=\|X\|_{R_n}$ (using \eqref{eq:col-row-spectral-radius} again). 
\end{proof}

\begin{remark}
Similar upper bounds can be obtained for other free random variables besides the free Haar unitaries.
In particular, one may replace the upper bound from \eqref{prop:Buchholz-Haagerup-Pisier-ineq} by the version of the Haagerup inequality due to de~la~Salle \cite{DeLaSalle2009}; these estimates then apply for example to the so-called ``free circular" elements.
\end{remark}

\section{The free SSLT for linear pencils}\label{sec:free-sslt-pencil}

For $g$-tuples of matrices $X=(X_1, \dots, X_g)$, we recall the operator space norms
\[
\|X\|_{\max \ell^1} := \sup_U \left\|\sum_{j=1}^g X_j\otimes U_j\right\|, 
\]
where the supremum is taken over all systems of unitary matrices $U_1, \dots, U_g$, and
\[
\|X\|_{row}:=\left\| \sum_{j=1}^g X_jX_j^*\right\|^{1/2}.
\]
 It is well known that $\|X\|_{\max \ell^1}$ is equal to the norm of $\sum_{j=1}^g X_j\otimes \mathfrak u_j$ in $M_k(\mathbb C)\otimes C^*(\mathbb F_g)$,  where the $\mathfrak u_j$ are the canonical generators of the {\em full} group $C^*$-algebra $C^*(\mathbb F_g)$.
\begin{proposition}[Uniform estimates on moments of determinants]\label{prop:uniform-bound-l2}
    Fix $g\geq 1$ and let  \( U_1^{(N)}, \ldots, U_g^{(N)} \) be independent Haar distributed random unitaries. 
\begin{itemize}    
 \item[(1)] {[max $\ell^1$ coefficients, all real exponents]}   Let $0\leq r<1$, $k\in \mathbb{N}$, $m\in \mathbb R$. Then there exists a constant $C_1(k,m,r)$ such that
 \[\sup\limits_{N\in \mathbb{N}}\mathbb{E}\left[\det |L_X(U^{(N)})|^{2m}\right]\leq C_1(k,m,r)\]
 for all \( X = (X_1, \ldots, X_g) \in M_{k}(\mathbb{C})^g \) such that $\|X\|_{\max\ell^1}\leq r$.

\item[(2)] {[row ball coefficients, positive exponents]} Let $0\leq r<1$, $k\in \mathbb{N}$, $m\geq 0$. Then there exists a constant $C_2(k,m,r)$ such that
 \[\sup\limits_{N\in \mathbb{N}}\mathbb{E}\left[\det |L_X(U^{(N)})|^{2m}\right]\leq C_2(k,m,r)\]
 for all \( X = (X_1, \ldots, X_g) \in M_{k}(\mathbb{C})^g \) such that $\|X\|_{row}\leq r$.

\end{itemize}
 
\end{proposition}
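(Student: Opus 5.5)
Part (1) follows from the ``easy'' concentration argument, and part (2) reduces to it by a dilation trick; the second step is where the real work lies.

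\textbf{Part (1).} Since $\|X\|_{\max\ell^1}\le r<1$, for every unitary tuple $U$ we have $\|\sum_j X_j\otimes U_j\|\le r$. Then $\log L_X(U)$ is given by a norm-convergent power series, and
\[
\log\det|L_X(U)|^{2m}=2m\,\Re\,\tr\log L_X(U)=:F_N(U).
\]
We obtain Lipschitz control of $F_N$ by combining Lemma~\ref{lem:log-lipschitz} with the estimate $\|\sum_j X_j\otimes(U_j-V_j)\|_2\le\sqrt{k}\|X\|_{row}d(U,V)$. Using $\|X\|_{row}\le\|X\|_{\max\ell^1}\le r$, this shows that $F_N$ is Lipschitz with constant at most $2|m|\sqrt{kN}\,\tfrac{r}{1-r}$, uniformly in $X$.

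Next we compute the mean. Expanding the logarithm,
\[
\mathbb E\,\tr\log L_X(U)=\sum_{n\ge1}\tfrac{(-1)^{n+1}}{n}\sum_{|w|=n}\tr(X^w)\,\mathbb E\,\tr(U^w),
\]
and $\mathbb E\,\tr U^w=0$ for every nonempty positive word $w$. This holds because $U_j\mapsto e^{i\theta}U_j$ leaves Haar measure invariant, so only words with zero exponent sum survive, and positive words never have zero exponent sum. Hence $F_N$ is centered. Corollary~\ref{cor:com-cor} then gives a sub-Gaussian tail bound with constants depending only on $k,m,r$, and integrating $e^{t}$ against that tail yields a bound $C_1(k,m,r)$ uniform in $N$ and $X$. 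Negative $m$ is covered by the same argument, since $|L_X|$ is invertible here.

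\textbf{Part (2).} For $m\ge0$ the idea is to dilate. Given $\|X\|_{row}\le r$, consider the $(g+1)k$-block matrix $T$ whose first block row is $(X_1,\dots,X_g)$, and set $Y_j=T\otimes E$-type embeddings. Concretely, we want to realize $\sum_j X_j\otimes U_j$ as a compression of a pencil $\sum_j \tilde X_j\otimes U_j$ with $\|\tilde X\|_{\max\ell^1}\le r$. The row operator has norm $\|X\|_{row}$, and $\sum_j X_j\otimes U_j=R\cdot\mathrm{col}(I\otimes U_j)$ with $R=(X_1\otimes I,\dots,X_g\otimes I)$. Taking $\tilde X_j=\begin{pmatrix}0&X_j E_j\\0&0\end{pmatrix}$-like block matrices of size $k(g+1)$, we get $\|\sum_j \tilde X_j\otimes U_j\|\le\|R\|\le r$, and $\det L_{\tilde X}=\det L_X$ by block-triangular structure. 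However, an upper-triangular nilpotent pencil has determinant $1$, so this naive version fails.

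The first alternative I would try is to handle the positive moments directly through the Hadamard-type bound $|\det L_X(U)|^{2}\le\det(I+\cdots)$. Better, use a Schur complement. With $P=\sum_j X_j\otimes U_j=R\,C$, where $C$ is the column of $I_k\otimes U_j$ (an isometry, $C^*C=gI$ scaled appropriately), Sylvester's identity gives
\[
\det(I+RC)=\det(I+CR),
\]
and $CR=\bigl(I_k\otimes U_i\,X_j\otimes I\bigr)_{i,j}=\sum_j \tilde X_j\otimes\tilde U$-type terms. To make this into a genuine linear pencil in the $U$'s, write $CR=\sum_i (E_{i}\otimes I)(X^{row}\otimes I)$ with $U_i$ acting on block row $i$, i.e.\ $CR=\sum_i \tilde X_i\otimes U_i$ where $\tilde X_i=e_i\otimes(X_1,\dots,X_g)\in M_{gk}$. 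Each $\tilde X_i$ has a single nonzero block row, and these rows sit in different block rows, so
\[
\Bigl\|\sum_i\tilde X_i\otimes U_i\Bigr\|=\bigl\|\mathrm{diag}(I\otimes U_i)\,(\mathbf 1\otimes R)\bigr\|\le\|R\|=\|X\|_{row}\le r
\]
for all unitaries, after normalizing $C$ appropriately (the column of the $U_i$ placed in distinct block rows is block diagonal, hence unitary). Thus $\|\tilde X\|_{\max\ell^1}\le r$ and $\det L_X(U)=\det L_{\tilde X}(U)$, so part (1) with $k$ replaced by $gk$ gives $C_2(k,m,r):=C_1(gk,m,r)$. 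Getting this identification exactly right (that the block-diagonal-unitary times row factorization is a linear pencil with $\max\ell^1$ norm $\le\|X\|_{row}$) is the main point to check. Restricting to $m\ge0$ is harmless here, and the statement only claims that case anyway.
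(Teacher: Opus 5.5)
Your part (1) is correct and is the paper's argument. You even make explicit the centering $\mathbb E\,\tr U^w=0$ for nonempty positive words, which the paper uses tacitly when invoking Corollary~\ref{cor:com-cor}. Your Lipschitz constant is off by a harmless factor $\sqrt k$, since Lemma~\ref{lem:log-lipschitz} is applied at size $kN$.

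Part (2), however, has a genuine gap, exactly at the step you flagged. The identity $\det(I+RC)=\det(I+CR)$ and the formula $CR=\sum_i\tilde X_i\otimes U_i$ are fine. The norm bound is false. In $CR=\mathrm{diag}(I_k\otimes U_i)\,(\mathbf 1\otimes R)$ the second factor is a column of $g$ copies of $R$, so its norm is $\sqrt g\,\|R\|$, not $\|R\|$. This is attained: for $N=k=1$, $CR=(e^{i\theta_i}x_j)_{i,j}$ is rank one with norm $\sqrt g\,\|x\|_2$. So you only get $\|\tilde X\|_{\max\ell^1}\le\sqrt g\,\|X\|_{row}$. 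That is no better than the trivial estimate $\|\sum_j X_j\otimes U_j\|=\|RC\|\le\|R\|\,\|C\|=\sqrt g\,\|X\|_{row}$, available without any dilation, and it covers only the row ball of radius $1/\sqrt g$.

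No cleverer determinant-preserving dilation can repair this. If $\|\tilde X\|_{\max\ell^1}<1$, then $L_{\tilde X}(U)$ is invertible at every unitary tuple. Row-ball pencils, by contrast, can be singular at unitaries. Take $g=2$, $k=1$, $X=(0.6,0.6)$, so $\|X\|_{row}=0.6\sqrt2<1$. With $U_j=e^{i\theta_j}I_N$ and $\theta_{1,2}=\pi\pm\arccos(5/6)$ one gets $L_X(U)=(1-1.2\cdot\tfrac56)I_N=0$. Your reduction would also give negative moments in the row ball, contradicting the Remark following Theorem~\ref{thm:free-sslt-pencil}.

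The missing idea is that $\|X\|_{row}<1$ controls only the \emph{typical} size of the pencil for large $N$, not its worst case over unitaries. So one must pass through free Haar unitaries and strong convergence. The paper does this in four steps:
\begin{enumerate}
\item Theorem~\ref{thm:spectral-radius} gives $\rho(\sum_j X_j\otimes u_j)=\rho_{out}(X)\le r$.
\item Hence there is a uniform interval $[a,b]\subset(0,\infty)$ containing the spectrum of $L_X(u)L_X(u)^*$, and, via the Fuglede--Kadison determinant, the normalization $(\tr_k\otimes\tau)\log\bigl(L_X(u)L_X(u)^*\bigr)=0$ holds.
\item Theorem~\ref{thm:uniform-bound} (concentration plus Parraud's estimate) then gives sub-Gaussian tails, uniform in $N$ and $X$, for $\tr f\bigl(L_X(U)L_X(U)^*\bigr)$. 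Here $f$ is a bounded smooth function equal to $\log$ on $[a,b]$ with $f\ge\log$ on $(0,K]$, where $K$ is a uniform bound for $\|L_X(U)L_X(U)^*\|$.
\item The pointwise inequality $\tr\log(L_XL_X^*)\le\tr f(L_XL_X^*)$ transfers these tails to the determinant. It is one-sided, which is precisely why only $m\ge0$ is obtained.
\end{enumerate}
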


\begin{proof}[Proof of Proposition~\ref{prop:uniform-bound-l2}]
 (1): By definition the assumption $\|X\|_{\max \ell^1} \leq r<1$ means that $\|\sum_{j=1}^g X_j\otimes U_j\|\leq r<1$ for all unitaries $U$. We may then set 
\begin{equation}\label{eqn:trace-log-single}
	f_N^{(X)}(U) := \tr\log\left( I_k \otimes I_N + \sum_{j=1}^{g} X_j \otimes U_j \right)
\end{equation}
and put  $F_N^X = f_N^X + \overline{f_N^X}$. From Lemma~\ref{lem:log-lipschitz}, the function $T\to \tr \log (I+T)$ is Lipschitz with constant at most $C_r\sqrt{N}$ on the set of $N\times N$ matrices of norm at most $r<1$. We conclude that the random variables $F_N^X$ are Lipschitz on $\mathcal U(N)^g$ with a constant $L\sqrt{N}$, where $L$ is uniform in $N$ and $\|X\|_{\max\ell^1}\leq r$. Applying concentration of measure (Corollary~\ref{cor:com-cor}), the $F_N^X$ obey a uniform sub-Gaussian tail bound. Since $|\det L_X(U^{(N)})|^{2m} =\exp\left( m F_N^X\right)$ for all $m\in\mathbb R$, the conclusion follows. Indeed, 
 \begin{align*}
     \mathbb E|\det L_X(U^{(N)})|^{2m} &\leq \mathbb E[\exp(mF_N^X)]\\
     &\leq \int_0^\infty e^{t}\,  \mathbb P\left(|F_N^X|>\frac{t}{m}\right)\, dt \\
&\leq C\int_0^\infty e^{t}e^{-ct^2/m^2}\, dt<\infty,
 \end{align*}
where the last bound is independent of $N$.

(2): For fixed $0\leq r<1$, we consider the set of matrix polynomials in $M_{k}(\C)\otimes C^*_r(\mathbb{F}_g)$ 
    \[\mathcal P =\{p_X(u,u^*) = L_X(u)L_X(u)^*: X_1, \dots, X_g\in M_k(\C), \|X\|_{row}\leq r\}.\]

This is clearly a bounded family in the sense of Definition~\ref{def:bounded-family}. We will verify the hypotheses of Theorem~\ref{thm:uniform-bound} for this $\mathcal P$. For each $X$, the polynomial $p_X(u,u^*)$ is positive by construction. By Theorem~\ref{thm:spectral-radius}, the pencil of free Haar unitaries $\sum_{j=1}^g X_j\otimes u_j$ has spectral radius at most $r$. It follows that $p_X(u,u^*)$ is invertible, and moreover we can choose a single interval $[a,b]\subset (0,\infty)$ such that the spectrum $\sigma(p_X(u,u^*))\subset [a,b]$ for all $\|X\|_{row}\leq r$. Indeed, observe that the functions $X\to \|L_X(u)\|$, $X\to \|L_X(u)^{-1}\|$ are continuous, and thus attain maxima, on the compact set of $X$ with $\|X\|_{row}\leq r$. We may then take
\[
a= \left(\max_{\|X\|_{row}\leq r} \|L_X(u)^{-1}\|^2 \right)^{-1}, \quad b=\max_{\|X\|_{row}\leq r} \|L_X(u)\|^2.
\]

 Next, observe that for each integer $n\geq 1$ we have
 \begin{align*}(\tr_k\otimes \tau)\left(\sum_{i=1}^g X_i\otimes u_i\right)^n&=\sum_{|w|=n} Tr_k(X^w)\tau(u^w)\\
   &=0,
   \end{align*}
 and since the spectral radius of $\sum_{i=1}^g X_j\otimes u_j$ is strictly less than $1$, it follows from properties of the Fuglede-Kadison determinant (Lemma~\ref{lem:FK-determinant}(2)) that $\Delta_{FK}(L_X(u)) =1$. Therefore by Lemma~\ref{lem:FK-determinant}(1)
 \begin{align*}
   (Tr_k\otimes \tau)\log p_X(u,u^*) &=2\log \Delta L_X(u)\\
   &=0
 \end{align*}
for all $\|X\|_{row}\leq r$. 
 
 Observe that $\|p_X(U,U^*)\|=\|L_X(U)L_X(U)^*\|\leq 4$ for all unitary matrices $U$. Now let $f:\mathbb R\to \mathbb R$ be a smooth (at least $C^6$) function such that $f$ and its derivatives are bounded on $\mathbb R$, and such that $f(x)=\log x$ on $[a,b]$ and  $f(x)\geq \log x$ for all $0<x\leq 4$. Put
 \[
 F_N^X := \tr\, f(p_X(U^{}, U^*))
 \]
 
We may now apply Theorem~\ref{thm:uniform-bound} to the bounded family $\mathcal P$, so there are constants $C,c>0$ such that for all systems of $k\times k$ matrices $X=(X_1, \dots, X_g)$ with $\|X\|_{row}\leq r$, and all $N$,
 \begin{equation}\label{eqn:tail-bound-corollary}
   \mathbb P(|F_N^X|>t) \leq C\exp(-ct^2).
 \end{equation}

 For a positive semidefinite matrix $A$ with a $0$ eigenvalue we interpret $\tr \log A:=-\infty$ and $\exp(-\infty):=0$. With these conventions we have $\det A =\exp \tr \log A$ for all positive semidefinite $A$. With the smooth function $f$ as above, we then have for all $\|X\|_{row}\leq r$ and all $N$
 \begin{equation} \tr_{Nk} \log [L_X(U^{(N)})L_X(U^{(N)})^* ] \leq F_N^X
\end{equation}
(pointwise as functions of the $U^{(N)}$). Multiplying by $m\geq 0$, exponentiating, and taking expectations, we obtain
 \begin{equation}
   \mathbb E|\det L_X(U^{(N)})|^{2m} \leq \mathbb E[\exp(mF_N^X)]
   \end{equation}
 Since we have the sub-Gaussian tail estimate (\ref{eqn:tail-bound-corollary}), uniform in $N\geq 1$ and $\|X\|_{row}\leq r$, we finally obtain the bound as in the proof of part (1)
 \begin{align*}
     \mathbb E|\det L_X(U^{(N)})|^{2m} &\leq \mathbb E[\exp(mF_N^X)]\\
     &\leq \int_0^\infty e^{t}\,  \mathbb P\left(|F_N^X|>\frac{t}{m}\right)\, dt \\
&\leq C\int_0^\infty e^{t}e^{-ct^2/m^2}\, dt<\infty. 
 \end{align*}
This completes the proof.
\end{proof}

We are now ready to prove the more general form of Theorem~\ref{thm:intro-free-pencil} from the introduction, which allows for quotients of determinants. (Indeed, Theorem~\ref{thm:intro-free-pencil} is just the case $C=D=0$ in Theorem~\ref{thm:free-sslt-pencil} below.) It is important to observe in the statement that we can allow for a weaker spectral radius assumption in the numerator, than in the denominator. We will comment on this after the proof. The proof depends on a couple of technical statements regarding the $\max$ spectral radius $\rho_{\max}(X)$; for clarity of exposition we defer the proofs of these to Section~\ref{sec:rational-prelim}. By definition, $\rho_{\max}(X)$ is the spectral radius of the element $\sum_{j=1}^g X_j\otimes \mathfrak{u}_j$, (in contrast to the free Haar unitaries $u_j$, which generate the {\em reduced} group $C^*$-algebra $C^*_r(\mathbb F_g)$).

\begin{theorem}[Free SSLT for quotients of monic pencils]\label{thm:free-sslt-pencil}
Let $A, B, C, D$ be $g$-tuples of square matrices with 
\[
\rho_{out}(A), \rho_{out}(B)<1, \quad \text{and}\quad \rho_{\max}(C), \rho_{\max}(D)<1.
\]
Then
   \begin{equation}\label{eqn:pencil-SSLT-main}
   \lim_{N\to \infty} \mathbb E \left[ \frac{\det L_A(U^{(N)})}{\det L_C(U^{(N)})}\frac{\det L_B(U^{(N)})^*}{\det L_D(U^{(N)})^*}\right] = \frac{\det L_A(\overline{-D})\det L_C(\overline{-B})}{\det L_A(\overline{-B})\det L_C(\overline{-D})}.
   \end{equation}
\end{theorem}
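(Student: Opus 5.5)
The plan is the one sketched in the introduction: first prove \eqref{eqn:pencil-SSLT-main} for small coefficients using the ``easy'' free SSLT, then extend to the full domain by analytic continuation plus normal-family compactness, with Proposition~\ref{prop:uniform-bound-l2} supplying the uniform bounds. Throughout I would use the Rota-type fact (deferred to Section~\ref{sec:rational-prelim}) that $\rho_{\max}(C)<1$ lets us replace $C$ by a simultaneous similarity $S^{-1}CS$ with $\|C\|_{\max\ell^1}<1$; similarly $\rho_{out}(A)<1$ lets us achieve $\|A\|_{row}<1$ after a similarity, since $\rho(\Phi^A_{row})<1$ yields a positive invertible $P$ with $\Phi_{row}^A(P)<P$. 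Such similarities change neither the determinants on the left nor the right side of \eqref{eqn:pencil-SSLT-main}, so I may assume $\|A\|_{row},\|B\|_{row}<1$ and $\|C\|_{\max\ell^1},\|D\|_{\max\ell^1}<1$.

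\emph{Step 1 (small coefficients).} Suppose all four tuples have $\max\ell^1$ norm at most some $r<1$. Then $\log L_X(U)=\sum_{n\ge1}\frac{(-1)^{n+1}}{n}\sum_{|w|=n}X^w\otimes U^w$ converges in norm for every unitary $U$. The integrand is therefore $\exp\bigl(\tr\sum_w c_wU^w\bigr)$ with
\[
c_w=\tfrac{(-1)^{|w|+1}}{|w|}\bigl(\tr A^w-\tr C^w\bigr)
\]
for $w\in\mathbb F_g^+$, plus the conjugate terms built from $B,D$ on inverse words $w^{-1}$. After averaging to make $c$ admissible, as in the Remark after Definition~\ref{def:admissible}, we have $\sum|c_w||w|\le\sum_n n^{-1}\cdot n\cdot k\,r^n<\infty$ (using $|\sum_{|w|=n}\tr X^w|$-type bounds; I need $|\tr X^w|$ summable over $w$, which I would arrange by taking $r$ small enough that $\sum_{|w|=n}\|X^w\|\le (g\max\|X_j\|)^n$ is geometric). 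Theorem~\ref{thm:a_free_szego} gives the limit $\exp\bigl(\frac12\sum_w c_wc_{w^{-1}}|w|\bigr)$. Only cross terms survive, pairing a positive word with its inverse. Summing over cyclic classes, with the factor $|w|$ cancelling the $1/|w|^2$, gives $\sum_n\frac1n\tr\bigl[(\sum_j A_j\otimes\overline{B_j})^n\bigr]$-type expressions. These exponentiate to $\det(I-\sum A_j\otimes\overline{B_j})^{-1}=\det L_A(\overline{-B})^{-1}$, and likewise for the other three pairings, with signs producing the quotient on the right of \eqref{eqn:pencil-SSLT-main}.

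\emph{Step 2 (extension).} Fix the allowed region $\Omega$: tuples with $\|A\|_{row},\|B\|_{row}<r$ and $\|C\|_{\max\ell^1},\|D\|_{\max\ell^1}<r$ for $r<1$. For each $N$ the left side $E_N(A,\overline B,C,\overline D)$ is holomorphic in the entries of $A,C$ and in the conjugated entries of $B,D$, since $\det L_C(U)^{-1}$ is holomorphic there. By H\"older, $|E_N|$ is bounded by a product of $\mathbb E|\det L_A|^{8}$, $\mathbb E|\det L_B|^{8}$, $\mathbb E|\det L_C|^{-8}$ and $\mathbb E|\det L_D|^{-8}$, each to the power $1/4$. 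The first two are uniformly bounded by Proposition~\ref{prop:uniform-bound-l2}(2) and the last two by (1) with $m=-4$. Hence $\{E_N\}$ is locally uniformly bounded on the connected domain $\Omega$, and Montel's theorem gives subsequential locally uniform limits, all holomorphic. By Step 1 these limits agree with the right-hand side on a small ball, so by the identity theorem every limit equals the right-hand side on all of $\Omega$, and the whole sequence converges. The right-hand side is holomorphic there because $\rho(\sum A_j\otimes\overline{B_j})\le\rho_{out}(A)\rho_{out}(B)<1$, and the analogous bound holds for $C,D$ via $\rho_{out}\le\rho_{\max}$. Letting $r\uparrow1$ covers all tuples after the similarity reduction.

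\emph{Main obstacle.} I expect the hard part to be the bookkeeping in Step 1: verifying that admissibility-averaged coefficients produce exactly $\frac12\sum c_wc_{w^{-1}}|w|=\sum_n\frac1n\tr(\cdots)^n$ with correct signs and multiplicities. This amounts to counting cyclic rotations, which Lemma~\ref{lem:second-order-moments-comp} handles. A secondary issue is confirming that the similarity reductions exist, i.e.\ that $\rho_{out}<1$ implies a similarity into the open row ball; this is the deferred technical statement.
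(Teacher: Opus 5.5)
Your proposal is correct and follows essentially the paper's argument. The steps match: the small-coefficient case via Theorem~\ref{thm:a_free_szego} (shrinking to $\max_j\|X_j\|<1/g$ for summability), a similarity reduction via Proposition~\ref{prop:max-radius}, uniform moment bounds from Proposition~\ref{prop:uniform-bound-l2} with four-term H\"older, then Montel and the identity theorem. Treating $E_N$ as jointly holomorphic in $(A,\overline{B},C,\overline{D})$ is slightly cleaner than the paper's fixing of $B,D$, since that version implicitly needs the small-coefficient identity at those fixed $B,D$. One slip is cosmetic: H\"older with four exponents $4$ needs fourth moments to the power $1/4$, whereas your eighth moments would carry exponent $1/8$; Proposition~\ref{prop:uniform-bound-l2} covers either choice.
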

\begin{proof}
We first fix $r<\frac1g$ and make the stronger assumption that for each of $X=A,B,C,D$
  \begin{equation}\label{eqn:small-abcd}
  \|X_j\|\leq r \quad \text{for each }j=1, \dots, g.
  \end{equation}
  This implies that if $X$ has size $k\times k$, then $|\tr X^w|\leq kr^{|w|}$, and therefore
  \begin{equation}\label{eqn:small-trace-estimate}
  \sum_{|w|=n} |\tr X^w| \leq k(gr)^n,      
  \end{equation}
which is summable in $n$.  Form the sequence of random variables
  \[
  G_N= \sum_{w\in \mathbb F_g^+, w\neq\varnothing} (-1)^{|w|}\left( \frac{\tr A^w}{|w|}-  \frac{\tr C^w}{|w|} \right)\tr \, U^{(N)w} + (-1)^{|w|}\overline{\left( \frac{\tr B^w}{|w|}-  \frac{\tr D^w}{|w|} \right)} \tr \, U^{(N)w*}.
    \]
    Observe that for each $N$, the series is uniformly convergent in the norm of $M_N(\mathbb C)$ and 
    \begin{align*}
      G_N = \tr  & \left[\log L_A(U^{(N)}) - \log L_C(U^{(N)})+\overline{\log L_B(U^{(N)}) - \log L_D( U^{(N)})}\right]
    \end{align*}
    so that the sequence in the left hand side of (\ref{eqn:pencil-SSLT-main}) is $\mathbb E[\exp G_N]$. 

    Using (\ref{eqn:small-trace-estimate}), we can then apply Theorem~\ref{thm:a_free_szego} with
    \[
    c_w =  (-1)^{|w|} \left(\frac{\tr A^w}{|w|}-  \frac{\tr C^w}{|w|}\right), \quad w\in \mathbb F_g^+,
    \]
    \[
    c_{w^{-1}} = (-1)^{|w|} \overline{\left( \frac{\tr B^w}{|w|}-  \frac{\tr D^w}{|w|} \right)}, \quad w\in \mathbb F_g^+,
    \]
    
    to conclude that the limit in (\ref{eqn:pencil-SSLT-main}) exists and has the value
    \begin{align*}
      \exp &\left( \sum_{w\in \mathbb F_g^+} c_w c_{w^{-1}} |w|\right) = \exp\left(\sum_{w\in \mathbb F_g^+} \frac{(\tr A^w -\tr C^w)\overline{(\tr B^w -\tr D^w)}}{|w|}\right)\\
        &= \exp\left(\sum_{n=1}^\infty \frac{ \tr (\sum A_j\otimes\overline{B_j})^n}{n} -\frac{ \tr (\sum A_j\otimes\overline{D_j})^n}{n}-\frac{ \tr (\sum C_j\otimes\overline{B_j})^n}{n} + \frac{ \tr (\sum C_j\otimes\overline{D_j})^n}{n}\right)\\
        &= \exp \left( \tr \left[\log L_A(-\overline{B}) -\log L_A(-\overline{D})-\log L_C(-\overline{B})+\log L_C(-\overline{D})\right]\right) 
    \end{align*}
   which gives the claimed determinantal formula under the assumption (\ref{eqn:small-abcd}). (To verify this, observe that if $Z$ is a matrix with $\|Z\|<1$, then with $\log (I+Z)$ defined by the usual power series, the identity $\det (I+Z) = \exp \tr\, \log(I+Z)$ is valid.)

    We now relax the assumptions on $A,B,C,D$. The theorem's hypothesis on the denominator matrices $C,D$ implies that there exists $r<1$ such that, up to a similarity, we have 
    \[
    \|C\|_{\max\ell^1}\leq r, \quad \|D\|_{\max\ell^1}\leq r.
    \]
     Similarly, up to similarity the $A,B$ will satisfy
    \[
    \|A\|_{row}\leq r, \quad \|B\|_{row}\leq r.
    \]
    (For both of these facts see Proposition~\ref{prop:max-radius} below.) By Proposition~\ref{prop:uniform-bound-l2} we then have that all of the expectations
    \begin{equation}\label{eqn:main-holder-step}
    \mathbb E |\det L_A(U^{(N)})|^4, \quad  \mathbb E |\det L_B(U^{(N)})|^4, \quad  \mathbb E |\det L_C(U^{(N)})|^{-4}, \quad \mathbb E |\det L_D(U^{(N)})|^{-4}
    \end{equation}
    are uniformly bounded in $N$. 
    
    Now, fix $B$ and $D$ satisfying these hypotheses. For each $N$, the term in the left hand side of (\ref{eqn:pencil-SSLT-main}) is a rational function $r_N(A,C)$ of the entries of all of the $A$ and $C$ matrices, regular in the product of the row ball and the $\max \ell^1$ ball. Applying the four-term H\"older's inequality using the estimates (\ref{eqn:main-holder-step}), we conclude that this sequence of rational functions remains uniformly bounded on compact sets of $A$ and $C$ in the row ball and $\max \ell^1$ ball respectively. By Montel's theorem, this sequence of rational functions will converge uniformly on compact sets to a holomorphic function $f_{B,D}(A,C)$.  By analytic continuation this function must agree with the function in the right hand side of (\ref{eqn:pencil-SSLT-main}). 
    \end{proof}

\begin{remark} One cannot expect to improve the stricter size restrictions on the denominator pencils $C$ and $D$. In particular, when $g>1$ the uniform bounds in item (2) of Proposition~\ref{prop:uniform-bound-l2} cannot be extended to negative exponents $m$. For example, if the statement were true for $k=1$, and $m=-1$, by expanding the determinant in terms of trace polynomials and using similar arguments as in the proof of \cite[Proposition 3.6]{jury2025determinantsrandomunitarypencils}, one could prove that for all $N$ the function
\[
(x,\overline{y})\to \mathbb E[L_x(U)^{-1} \overline{L_y(U)^{-1}}]
\]
would be analytic for $x, \overline{y}$ in the Euclidean ball $\mathbb B^g\subset \mathbb C^g$. However, one can compute an explicit power series expansion for this function valid for small $x$ and $y$ and show that it cannot converge in any polydisk of radius greater than $\frac1g$. (If the function were analytic for all $x,y$ in the ball then the series would have to converge in the polydisk $\frac{1}{\sqrt{g}}\mathbb D^g\subset \mathbb B^g$.)

The proof of this requires some rather lengthy computations which we do not include here, but we sketch the key points. If we assume that $\|x\|_1<1$, then $\|\sum_{j=1}^g x_jU_j\|<1$, and one can then show (by computations very similar to those in the proof of \cite[Theorem 4.1]{jury2025determinantsrandomunitarypencils}) that for $x,y\in \mathbb C^g$ with $\|x\|_1, \|y\|_1<1$, 
\begin{equation}\label{eqn:main-integral-inverse}
	 \mathbb E_N[L_x(U)^{-1} \overline{L_y(U)^{-1}}]= \sum_{n = 0}^\infty \sum_{ |\alpha|=n } \hat{c}_{\alpha}(N) \binom{|\alpha|}{\alpha}x^\alpha \overline{y}^\alpha
	\end{equation}
    
where
	\begin{equation*}\label{eqn:main-coefficient-inverse}
		\hat{c}_{\alpha}(N) = \binom{|\alpha|}{\alpha} \binom{N+|\alpha|-1}{|\alpha|} \prod_{j=1}^{g} \binom{N+\alpha_j-1}{\alpha_j}^{-1}.
	\end{equation*}
It is then not hard to show, using Stirling's formula, that for each fixed $N$
\begin{equation}\label{eqn:root-test}
	\limsup_{n \to \infty} \left( \sum_{ |\alpha|=n } \hat{c}_{\alpha}(N) \binom{|\alpha|}{\alpha} \right)^{\frac{1}{n}} \geq  g^2.
\end{equation}
Thus, when we restrict to the diagonals $x=(s,s, \cdots, s), y=(t,t, \cdots, t)$, we obtain
\[
 \mathbb E_N[L_x(U)^{-1} \overline{L_y(U)^{-1}}] = \sum_{n=0}^\infty \left(  \sum_{ |\alpha|=n } \hat{c}_{\alpha}(N) \binom{|\alpha|}{\alpha}\right)(s\overline{t})^n
\] 
where the power series on the right diverges for $|st|>\frac{1}{g^2}$. It follows that the original series \eqref{eqn:main-integral-inverse} cannot converge in a polydisk of radius greater than $\frac1g$. 

This example further shows that there cannot be any $\delta>0$ so that Proposition~\ref{prop:uniform-bound-l2}(2) is valid for $-\delta<m<0$, since by taking direct sums of $k$ copies of scalar $x$, the above counterexample gives a counterexample with $k\times k$ coefficients and $m=-\frac1k$. 
\end{remark}

\section{Stability and regularity for linear pencils and nc rational functions}\label{sec:rational-prelim}

We have already encountered the joint spectral radii $\rho_{max}$ and $\rho_{out}$, but these are actually just two members of a much larger family. In \cite[Definition 2.1 and 2.2]{ShalitMosheShamovich2025} it is shown how, given an operator space $\mathcal{E}$, and $X\in M_k(\mathcal{E})$, one can define a joint spectral radius $\rho_{\mathcal{E}}(X)$ adapted to $\mathcal E$. We will only work with finite dimensional operator spaces so  $\rho_{\mathcal{E}}(X) = \rho_Q(X)$ (in the notation of \cite{ShalitMosheShamovich2025}). In the case when $\mathcal{E}$ is the row operator space we have $\rho_{\mathcal{E}} = \rho_{out}$. The $\rho_{\max}$ we have already introduced turns out to be equal to the spectral radius associated to the operator space $\mathcal E=\max \ell^1$ over $\mathbb C^g$:

\begin{lemma}\label{lem:max-radius}
    For a tuple $X=(X_1, \dots, X_g)$ of $k\times k$ matrices, the operator space spectral radius $\rho_{\max\ell^1}(X)$ is equal to the spectral radius of the operator $\sum_{j=1}^g X_j\otimes \mathfrak u_j$ in the $C^*$-algebra, $M_k(\mathbb C)\otimes C^*(\mathbb F_g)$. 
\end{lemma}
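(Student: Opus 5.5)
We will work from the Shalit--Shamovich definition of the operator space joint spectral radius. For a finite dimensional operator space $\mathcal E$ spanned by $e_1,\dots,e_g$ and $X\in M_k(\mathbb C)^g$, $\rho_{\mathcal E}(X)$ is obtained as $\lim_{n\to\infty}\|X^{\odot n}\|_{M_k(\mathcal E^{\otimes_h n})}^{1/n}$. That is, we take the norm of $\sum_{|w|=n} X^w\otimes e_{w_1}\otimes\cdots\otimes e_{w_n}$ in the $n$-fold Haagerup tensor power of $\mathcal E$, and then take $n$-th roots. The plan is to identify this Haagerup tensor power norm, for $\mathcal E=\max\ell^1_g$, with the norm of the $n$-th power of the pencil in the full group $C^*$-algebra. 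The claim then follows from Gelfand's formula:
\[
\rho\Bigl(\sum_j X_j\otimes\mathfrak u_j\Bigr)=\lim_n\Bigl\|\Bigl(\sum_j X_j\otimes \mathfrak u_j\Bigr)^n\Bigr\|^{1/n}=\lim_n\Bigl\|\sum_{|w|=n}X^w\otimes \mathfrak u^w\Bigr\|^{1/n}.
\]

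\textbf{Step 1.} Recall that $\max\ell^1_g$ is completely isometric to $\mathrm{span}\{\mathfrak u_1,\dots,\mathfrak u_g\}\subset C^*(\mathbb F_g)$. This is the fact quoted at the start of Section~\ref{sec:free-sslt-pencil}: $\|X\|_{\max\ell^1}=\|\sum X_j\otimes\mathfrak u_j\|$, with the supremum over unitary matrices.

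\textbf{Step 2.} Identify the $n$-fold Haagerup tensor power of this span completely isometrically with $\mathrm{span}\{\mathfrak u^w:|w|=n,\ w\in\mathbb F_g^+\}\subset C^*(\mathbb F_g)$ via $\mathfrak u_{i_1}\otimes\cdots\otimes\mathfrak u_{i_n}\mapsto \mathfrak u_{i_1}\cdots\mathfrak u_{i_n}$.

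The inequality $\|\sum X^w\otimes\mathfrak u^w\|\le\|\cdot\|_h$ is immediate from the definition of the Haagerup norm as an infimum over factorizations. Multiplication of operators is completely contractive on Haagerup tensor products, so we take a factorization $\sum_j \alpha_j\odot\beta_j$ and multiply the operator entries.

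For the reverse inequality we would use the Christensen--Effros--Sinclair--Paulsen representation: the Haagerup norm of an element of $\mathcal E_1\otimes_h\cdots\otimes_h\mathcal E_n$ equals the supremum of $\|\sum \phi_1(a_1)\cdots\phi_n(a_n)\|$ over complete contractions $\phi_i:\mathcal E_i\to B(H)$. Each complete contraction of $\max\ell^1_g$ sends $e_j$ to a contraction $T_{i,j}$. By unitary dilation (for example, Halmos dilation of each contraction to a unitary on $H\oplus H$), we compress to get $\phi_i(e_j)=P\,W_{i,j}|_H$ with $W_{i,j}$ unitary.

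The compressions of different factors do not multiply correctly, so we instead use the standard trick. Choose unitaries $W_{i,j}$ on $H\otimes\mathbb C^{n+1}$, of block form, so that $PW_{1,j_1}\cdots W_{n,j_n}P=T_{1,j_1}\cdots T_{n,j_n}$; this is the standard Sz.-Nagy type dilation of a product of $n$ independent families of contractions. Now the unitaries $\widetilde W_j^{(i)}$ live in different tensor positions. Set $V_j:=$ the single unitary combining all $W_{i,j}$ as block shifts, so that the word $V^w$ for $|w|=n$ compresses to $T_{1,w_1}\cdots T_{n,w_n}$. This yields a $*$-representation of $\mathbb F_g$ whose compression realizes $\sum \phi_1(e_{w_1})\cdots\phi_n(e_{w_n})$, and hence the bound $\|\cdot\|_h\le\|\sum X^w\otimes\mathfrak u^w\|_{C^*(\mathbb F_g)}$.

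\textbf{Step 3.} Take $n$-th roots and pass to the limit.

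The main obstacle is the construction in Step 2 of a single $g$-tuple of unitaries whose length-$n$ words compress to products in which the $i$-th letter uses the $i$-th contraction family. I would try the graded shift construction: $V_j=\sum_i E_{i+1,i}\otimes W_{i,j}$ plus a unitary closing term $E_{1,n+1}\otimes W_{n+1,j}$ on $H\otimes\mathbb C^{n+1}$, where the $W_{i,j}$ are Halmos unitary dilations of $T_{i,j}$. Compressing the corner from position $1$ to position $n+1$ of $V^w$ then picks out exactly $W_{n,w_n}\cdots W_{1,w_1}$, and compressing further to $H$ gives the ordered product (up to reversing the word order). Matrix coefficients $X$ simply ride along by tensoring, since every step is complete.
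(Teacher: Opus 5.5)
Your plan has the same skeleton as the paper's one-line proof. The paper combines the Shalit--Shamovich definition with the fact, cited from Pisier, that the universal operator algebra of $\max\ell^1_g$ is realized in $C^*(\mathbb F_g)$ by the $\mathfrak u_j$, and then applies Gelfand's formula. Your Step~2 is an attempt to prove that identification by hand. Several parts of it are fine: the easy inequality, the reduction of the Haagerup norm to $\sup\|\sum_w X^w\otimes T_{1,w_1}\cdots T_{n,w_n}\|$ over contractions $T_{i,j}$, and the graded cyclic shift, whose length-$n$ words have the ordered product of the weights as their $(n+1,1)$ block.

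The gap is the choice of weights. A Halmos dilation $W=\begin{pmatrix} T & D_{T^*}\\ D_T & -T^*\end{pmatrix}$ is only a $1$-dilation. Compressing a product of such unitaries to $H$ therefore does not return the product of the $T$'s: already $P_HW_2W_1|_H=T_2T_1+D_{T_2^*}D_{T_1}$. So your final claim that ``compressing further to $H$ gives the ordered product'' fails. The ``standard Sz.-Nagy type dilation of a product of $n$ independent families'' you invoke earlier is exactly the ingredient still missing. It amounts to the von Neumann-type inequality $\|\sum_w X^w\otimes T^w\|\le\|\sum_w X^w\otimes\mathfrak u^w\|_{M_k(C^*(\mathbb F_g))}$ for arbitrary noncommuting contractions and positive words, which is the content of the fact the paper cites.

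To close the gap, dilate all the $T_{i,j}$ simultaneously on one space, in a way that is multiplicative on positive words. One elementary way is as follows.
\begin{itemize}
\item On $K=H\oplus\ell^2(\mathbb N;H)$ set $S_{i,j}(h,h_1,h_2,\dots)=(T_{i,j}h,\,D_{T_{i,j}}h,\,h_1,h_2,\dots)$. Each $S_{i,j}$ is an isometry with $P_HS_{i,j}=T_{i,j}P_H$, so every product of the $S_{i,j}$ compresses to the corresponding product of the $T_{i,j}$.
\item Replace each $S$ by the unitary $\begin{pmatrix} S & I-SS^*\\ 0 & S^*\end{pmatrix}$ on $K\oplus K$. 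This leaves $K$ invariant, so products still compress correctly to $H$.
\item Use these unitaries as the weights in your graded shift. This gives a single unitary $g$-tuple whose length-$n$ words have $T_{n,w_n}\cdots T_{1,w_1}$ as a compressed corner.
\item The universal property of $C^*(\mathbb F_g)$ then yields $\|\cdot\|_h\le\|\sum_w X^w\otimes\mathfrak u^w\|$.
\end{itemize}
Equivalently, you can use the contractions $T_{i,j}$ themselves as shift weights and apply this joint dilation once to the resulting contraction tuple. With that repair, Steps~2--3 go through.
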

\begin{proof} 
This follows from the definition of $\rho_{\max\ell^1}$ from \cite{ShalitMosheShamovich2025} and the fact that the universal operator algebra over $\max \ell^1_g$ is the full group $C^*$-algebra $C^*(\mathbb F_g)$; for this latter fact see \cite[Theorem 8.12]{Pisier_2003}.
\end{proof}
The following theorem is from \cite[Corollary 2.12]{ShalitMosheShamovich2025}:

\begin{theorem} \label{shalit_shamovich_cor_2_12}

Let $\cE$ be a finite-dimensional operator space, and let $X \in M_n(\cE)$. 
Then $\rho_\cE(X) < 1$ if and only if there exists $S \in \GL_n $ such that $\|S^{-1}XS\|<1$.
\end{theorem}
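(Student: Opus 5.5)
The forward implication is the easy one, and the reverse implication is the real content. For the reverse implication I will encode $X$ as a homomorphism and apply Paulsen's similarity theorem, which is a completely bounded version of Rota's theorem.

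\emph{Easy direction.} I will use that $\rho_\cE$ is invariant under scalar similarity, $\rho_\cE(S^{-1}XS)=\rho_\cE(X)$. This holds because the $k$-fold "powers" of $S^{-1}XS$ are $(S^{-1}\otimes 1)X^{k}(S\otimes 1)$, and conjugation by a fixed $S$ changes norms by at most the factor $\|S\|\|S^{-1}\|$, which disappears after taking $k$-th roots. I will also use that $\rho_\cE(Y)\le \|Y\|$, which follows from submultiplicativity of the norms $\|Y^{k}\|\le \|Y\|^k$ in the Haagerup tensor powers $M_n(\cE^{\otimes_h k})$ used to define $\rho_\cE$ in \cite{ShalitMosheShamovich2025}. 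Together these give $\rho_\cE(X)=\rho_\cE(S^{-1}XS)\le\|S^{-1}XS\|<1$.

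\emph{Hard direction.} Assume $\rho_\cE(X)<1$ and fix $r$ with $\rho_\cE(X)<r<1$. By definition of $\rho_\cE$ as $\lim_k\|X^{k}\|^{1/k}$, there is a constant $C$ with $\|X^{k}\|_{M_n(\cE^{\otimes_h k})}\le C r'^k$ for some $r'<r$. Because $\cE$ is finite-dimensional, $M_n(\cE)$ is completely isometrically identified with $CB(\cE^*,M_n)$. Similarly, by self-duality of the Haagerup tensor product in finite dimensions, $M_n(\cE^{\otimes_h k})\cong CB((\cE^*)^{\otimes_h k},M_n)$, and under this identification $X^{k}$ corresponds to the map $\varphi_1\otimes\cdots\otimes\varphi_k\mapsto \varphi_1(X)\cdots\varphi_k(X)$. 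Let $\mathcal T=\mathcal T(\cE^*)$ be the Haagerup tensor algebra, i.e.\ the universal operator algebra generated by $\cE^*$. I then define the unital homomorphism $\pi:\mathcal T\to M_n$ by $\pi(\varphi)=r^{-1}\varphi(X)$ for $\varphi\in\cE^*$. On the degree-$k$ part, $\pi$ has cb-norm $r^{-k}\|X^{k}\|\le C(r'/r)^k$. Summing the degree-$k$ pieces, after checking that the grading projections on $\mathcal T$ are completely contractive, shows that $\pi$ is completely bounded.

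By Paulsen's similarity theorem there is then $S\in\GL_n$ such that $S^{-1}\pi(\cdot)S$ is completely contractive. Restricting this map to $\cE^*$ and translating back through $CB(\cE^*,M_n)\cong M_n(\cE)$ gives $\|S^{-1}XS\|\le r<1$.

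\emph{Main obstacle.} I expect the hardest step to be the duality bookkeeping. I must check that the norm of the $k$-th power of $X$ used in the definition of $\rho_\cE$ really equals the cb-norm of the degree-$k$ component of $\pi$. This requires knowing that the powers are taken in the Haagerup tensor powers, matching the universal operator algebra of \cite{ShalitMosheShamovich2025}, and that finite-dimensional duality commutes with $\otimes_h$. I would first try to verify this through the factorization description of the Haagerup norm. I also need the grading projections to be completely contractive so that the sum $\sum_k r^{-k}\|X^{k}\|$ really bounds $\|\pi\|_{cb}$; I would get this from the gauge action of the circle on $\mathcal T$.
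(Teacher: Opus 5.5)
Your proof is correct. There is no internal proof to compare it with: the paper does not prove this statement but quotes it from \cite{ShalitMosheShamovich2025} (Corollary 2.12), and for the row case it points to Pascoe's older self-contained argument. What you have written is therefore an independent proof.

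Both directions check out. The easy direction is similarity invariance together with $\|Y^{\odot k}\|_h\le\|Y\|^k$. In the hard direction, the rescaling by $r^{-1}$ is exactly what turns Paulsen's complete contractivity into the strict bound $\|S^{-1}XS\|\le r<1$.

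The step you flag as the main obstacle is lighter than you expect, because you only need the inequality $\|\pi_k\|_{cb}\le r^{-k}\|X^{\odot k}\|_{M_n(\mathcal{E}^{\otimes_h k})}$, where $X^{\odot k}=\sum_{|w|=k}X^w\otimes e_w$ is your $X^{k}$. Argue as follows:
\begin{itemize}
\item Write $X^{\odot k}=x_1\odot\cdots\odot x_k$ with $x_i\in M_{p_{i-1},p_i}(\mathcal{E})$ and $\prod\|x_i\|$ nearly optimal.
\item Note that $\pi_k(\varphi_1\otimes\cdots\otimes\varphi_k)=r^{-k}\varphi_1(x_1)\cdots\varphi_k(x_k)$.
\item Use that a product of complete contractions $\mathcal{E}^*\to M_{p_{i-1},p_i}$ linearizes to a complete contraction on $(\mathcal{E}^*)^{\otimes_h k}$.
\end{itemize}
This uses only the level-one identification $M_{p,q}(\mathcal{E})\cong CB(\mathcal{E}^*,M_{p,q})$, not self-duality of $\otimes_h$. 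You should also confirm that your Haagerup-power formula matches the definition in \cite{ShalitMosheShamovich2025}. If $\rho_{\mathcal{E}}$ is defined there as a spectral radius in $M_n$ of the universal operator algebra of $\mathcal{E}$, you additionally need the standard fact (block-shift trick) that its degree-$k$ part is completely isometric to $\mathcal{E}^{\otimes_h k}$.

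Comparing routes: Pascoe's proof for the row space is elementary. With $\Phi=\Phi^X_{row}$ and $\rho(\Phi)<1$, the element $P=\sum_{k\ge 0}\Phi^k(I)$ solves $P-\Phi(P)=I$. Then $S=P^{1/2}$ gives $\sum_j(S^{-1}X_jS)(S^{-1}X_jS)^*=I-P^{-1}<I$. This works only because the row norm is computed by a completely positive map, which is special to the row and column cases.

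Your Paulsen-based argument is the genuine completely bounded analogue of Rota's theorem. It works for every finite-dimensional $\mathcal{E}$, in particular for $\max\ell^1_g$, which is the case the paper actually needs in Proposition~\ref{prop:max-radius}(1). It also gives the quantitative bound $\|S\|\|S^{-1}\|\le\sum_{k\ge 0}r^{-k}\|X^{\odot k}\|$. The cost is the machinery of the tensor algebra and the completely bounded representation theorem that underlies Paulsen's similarity theorem.
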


\begin{proposition}\label{prop:max-radius}
Let $X=(X_1, \dots, X_g)$ be a $g$-tuple of $k\times k$ matrices. Then
\begin{itemize}
        \item[1)] $\rho_{\max}(X)<1$ if and only if there exists $S\in GL_k(\mathbb C)$ and a $g$-tuple $Y$ with $\|Y\|_{\max \ell^1_g}<1$ such that $X=S^{-1}YS$.
        \item[2)] $\rho_{out}(X)<1$ if and only if there exists $S\in GL_k(\mathbb C)$ and a $g$-tuple $Y$ with $\|Y\|_{row}<1$ such that $X=S^{-1}YS$.
    \end{itemize}
\end{proposition}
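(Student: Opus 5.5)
Both parts should follow from Theorem~\ref{shalit_shamovich_cor_2_12}, applied to the finite-dimensional operator space $\cE=\max\ell^1_g$ in case 1) and $\cE=$ the row operator space $R_g$ in case 2). The work is to identify the abstract spectral radius $\rho_\cE$ and the matrix norm on $M_k(\cE)$ with the concrete quantities in the statement. A tuple $X=(X_1,\dots,X_g)$ of $k\times k$ matrices is the element $\sum_j X_j\otimes e_j\in M_k(\cE)$, where $e_1,\dots,e_g$ is the canonical basis of $\mathbb C^g$. The conjugation $S^{-1}XS$ in Theorem~\ref{shalit_shamovich_cor_2_12} acts entrywise in the matrix level, giving the tuple $(S^{-1}X_1S,\dots,S^{-1}X_gS)$. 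So the similarity statement in the theorem is exactly ``$X=S'^{-1}YS'$ with $\|Y\|_{M_k(\cE)}<1$'' after renaming $S'=S^{-1}$ and $Y=S^{-1}XS$.

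For 1), I would use the following steps.
\begin{enumerate}
\item By Lemma~\ref{lem:max-radius}, $\rho_{\max\ell^1}(X)=\rho_{\max}(X)$.
\item The matrix norm of $X$ in $M_k(\max\ell^1_g)$ is, by definition of the max operator space structure on $\ell^1_g$, the supremum of $\|\sum_j X_j\otimes T_j\|$ over all contractions $T_j$. Equivalently, via its universal $C^*$-algebra $C^*(\mathbb F_g)$ (\cite[Theorem 8.12]{Pisier_2003}), it is $\|\sum_j X_j\otimes\mathfrak u_j\|$.
\item Since $C^*(\mathbb F_g)$ is residually finite dimensional, this norm equals $\|X\|_{\max\ell^1}$ as defined in Section~\ref{sec:free-sslt-pencil}.
\item Theorem~\ref{shalit_shamovich_cor_2_12} then gives 1) immediately.
\end{enumerate}
In step 2, one must check that the supremum over contractions agrees with the supremum over unitaries. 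This follows by unitary dilation, or directly from the fact that extreme points of the ball of $\ell^1_g$ generate the full group $C^*$-algebra.

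For 2), the row operator space $R_g$ has $\|\sum_j X_j\otimes e_{1j}\|_{M_k(R_g)}=\|\sum_j X_jX_j^*\|^{1/2}=\|X\|_{row}$, so the norm identification is immediate. For the radius, the paper already notes (citing \cite{ShalitMosheShamovich2025}) that $\rho_{R_g}=\rho_{out}$. To verify this, I would check that the $n$-th power in the Shalit--Shamovich construction is the row of all monomials $X^w$, $|w|=n$. Its norm is then $\|X\|_{R_n}$, and taking $n$-th roots together with \eqref{eq:col-row-spectral-radius} and Lemma~\ref{lem:radius-haar-pencils} yields $\rho_{row}(X)=\rho_{out}(X)$. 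Applying Theorem~\ref{shalit_shamovich_cor_2_12} again completes 2).

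Alternatively, 2) admits a direct proof avoiding the general theory. If $\rho(\Phi^X_{row})<1$, then $P=\sum_{n\ge0}(\Phi^X_{row})^n(I)$ converges and is positive definite, and it satisfies $\Phi^X_{row}(P)=P-I\le P-\epsilon P$ for some $\epsilon>0$. Setting $Y_j=P^{-1/2}X_jP^{1/2}$ gives $\sum_j Y_jY_j^*\le(1-\epsilon)I$, so $\|Y\|_{row}<1$. The converse is trivial, since $\rho_{out}$ is similarity invariant and bounded by $\|\cdot\|_{row}$.

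The main obstacle is bookkeeping: ensuring that the definition of $\rho_\cE$ in \cite{ShalitMosheShamovich2025} (via powers in the universal algebra) matches $\rho_{\max}$ and $\rho_{out}$. For $\max\ell^1$ this is supplied by Lemma~\ref{lem:max-radius}, and for the row space I would fall back on the direct Stein-equation argument above.
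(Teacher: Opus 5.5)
Your proposal is correct and follows the paper's proof. Part 1) is exactly Lemma~\ref{lem:max-radius} combined with Theorem~\ref{shalit_shamovich_cor_2_12}; your dilation and residual-finite-dimensionality check that the $M_k(\max\ell^1_g)$ norm equals the supremum over unitary matrices makes explicit what the paper calls well known. For part 2) the paper likewise either applies Theorem~\ref{shalit_shamovich_cor_2_12} to the row space or cites Pascoe, and your Stein-equation argument with $P=\sum_{n\ge 0}(\Phi^X_{row})^n(I)$ and $Y_j=P^{-1/2}X_jP^{1/2}$ is a valid self-contained substitute for that citation.
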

\begin{proof} Item (1) is immediate from Lemma~\ref{lem:max-radius} and Theorem~\ref{shalit_shamovich_cor_2_12}. Item (2) can also be obtained as an application of Theorem~\ref{shalit_shamovich_cor_2_12}, but is older, see \cite[Theorems 1.1 and 1.9]{pascoe2019outerspectralradiusdynamics} for self-contained proofs and some history.
\end{proof}

We say an nc polynomial $p \in M_d(\mathbb{C})\langle x_1, \cdots, x_g,x_1^*,\cdots,x_g^* \rangle$ 
is \emph{strictly stable} with respect to $\min(\ell^{\infty}_g)$ (the minimal operator space structure on $\mathbb{C}^g$ equipped with $\ell^{\infty}$ norm)
if

\[
\det(p(Z)) \neq 0,  \quad \text{for all }\, Z
\text{ such that } \|Z\|_{\min(\ell^{\infty}_g)} \le 1
\]

where $Z \in \bigcup\limits_{k=1}^{\infty} M_k(\mathbb{C})^g$.  We observe that $ \|Z\|_{\min(\ell^{\infty}_g)}$ is equal to $\max\{\|Z_1\|, \dots, \|Z_g\|\}$. Hence we refer to the set $\{\|Z\|_{\min\ell^\infty_g}<R\}$ as the {\em nc polydisk} of radius $R$.

As a consequence of results in \cite{ShalitMosheShamovich2025} we have a version of \cite[Proposition 7.1]{jury2025determinantsrandomunitarypencils} where the outer spectral radius is replaced by the spectral radius associated to the $\max(\ell^1_g)$ operator space structure.
\begin{proposition}\label{prop:stable-polynomials-generalization}
Suppose $p \in M_d(\mathbb{C})\langle x_1, \cdots, x_g,x_1^*,\cdots,x_g^* \rangle$,
$p(0) = I_d$.  
Then $p$ is strictly stable with respect to $\min(\ell^{\infty}_g)$ if and only if
there exists a tuple $X = (X_1, \ldots, X_g) \in M_k(\mathbb{C})^g$ such that
\[
\rho_{\max \ell^1}(X) < 1
\]
and
\begin{equation}\label{strictly_stable_char_eq}
\det(p(Z)) = \det\!\left(I_k\otimes I_d - \sum_i X_i \otimes Z_i \right)
\quad \text{for all } \|Z\|_{\min(\ell_g^{\infty})} \le 1.    
\end{equation}

\end{proposition}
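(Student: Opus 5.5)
The proof will run in parallel with \cite[Proposition 7.1]{jury2025determinantsrandomunitarypencils}, with the outer spectral radius replaced by $\rho_{\max\ell^1}$. Theorem~\ref{shalit_shamovich_cor_2_12}, in the form of Proposition~\ref{prop:max-radius}(1), supplies the link between $\rho_{\max\ell^1}$ and norms after similarity. I would use one auxiliary fact throughout: the unit ball of $\min(\ell^\infty_g)$ (the closed nc polydisk) and the unit ball of $\max(\ell^1_g)$ are dual under the pairing $(X,Z)\mapsto \sum_i X_i\otimes Z_i$. Concretely, $\|X\|_{\max\ell^1}<1$ if and only if $\|\sum_i X_i\otimes Z_i\|<1$ for every tuple of contractions $Z$. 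This is the standard duality $\max(E)^*\cong\min(E^*)$, or it can be read off from Lemma~\ref{lem:max-radius} together with the dilation of contractions to unitaries.

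\emph{Sufficiency.} Suppose $X$ satisfies $\rho_{\max\ell^1}(X)<1$ and \eqref{strictly_stable_char_eq} holds. By Proposition~\ref{prop:max-radius}(1) we can write $X=S^{-1}YS$ with $\|Y\|_{\max\ell^1}<1$. Since $\det$ is unchanged under the similarity $S\otimes I$, for every $Z$ with $\|Z\|_{\min\ell^\infty}\le 1$ we get
\[
\det\Big(I-\sum_i X_i\otimes Z_i\Big)=\det\Big(I-\sum_i Y_i\otimes Z_i\Big).
\]
The duality fact gives $\|\sum_i Y_i\otimes Z_i\|<1$, so the right-hand side is nonzero. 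Hence $\det p(Z)\neq0$ on the closed nc polydisk, which is strict stability.

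\emph{Necessity.} This is the harder direction. The first step is to realize $p$ by a monic linear pencil: by the standard linearization (Higman trick) of nc polynomials with $p(0)=I_d$ and variables $x_i,x_i^*$, there is a pencil $I-\sum_i X_i x_i$ in the $2g$ letters whose determinant agrees with $\det p$. On the polydisk we substitute $Z_i^*$ for $x_i^*$, and since $\|Z_i^*\|=\|Z_i\|$ the substituted tuple still lies in the polydisk; I would therefore treat the $2g$-tuple as a $g$-tuple in the $\min\ell^\infty$ sense, possibly after enlarging $g$. The second step is to pass from stability to a spectral radius bound. Strict stability on the closed polydisk, which is compact at each level, extends by homogeneity to $\det p(tZ)\neq0$ for $\|Z\|\le1$ and $|t|\le 1+\epsilon$, with $\epsilon$ uniform over levels. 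The uniformity should come from the fact that an nc polynomial has bounded degree, together with the determinantal identity. This shows $\sum_i X_i\otimes Z_i$ has spectral radius at most $(1+\epsilon)^{-1}$ on the polydisk.

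The main obstacle is the next step: showing that this bound on the spectral radius of $\sum_i X_i\otimes Z_i$, uniform over the polydisk at all levels, forces $\rho_{\max\ell^1}(X)<1$. My first attempt would use the characterization in \cite{ShalitMosheShamovich2025} of $\rho_{\max\ell^1}$ as a spectral radius over the unit ball of the dual operator space $\min\ell^\infty$, where contractions can be replaced by unitaries via Lemma~\ref{lem:max-radius}. There is a second issue at this point: the minimal realization may have to be cut down so that nilpotent parts do not obstruct the argument. I would handle this with the similarity and minimal-realization arguments from \cite{jury2025determinantsrandomunitarypencils}, which identify the spectral radius of $\sum_i X_i\otimes Z_i$ with the zero set of $\det p$ via \eqref{strictly_stable_char_eq}.
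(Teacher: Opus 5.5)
Your sufficiency direction is the paper's argument: the similarity from Proposition~\ref{prop:max-radius}(1) plus the $\min\ell^\infty_g$/$\max\ell^1_g$ duality. The necessity direction has a genuine gap, at exactly the step you flag as the main obstacle.

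After linearizing, you only have pointwise information: $\rho(\sum_i X_i\otimes Z_i)\le(1+\epsilon)^{-1}$ for each finite-level $Z$ in the polydisk. That does not bound $\rho_{\max\ell^1}(X)=\rho(\sum_i X_i\otimes\mathfrak u_i)$. In a residually finite-dimensional $C^*$-algebra, the spectral radius of an element can exceed the supremum of the spectral radii of its finite-dimensional images. For example, take $a=(J_n)_n\in\prod_n M_n(\mathbb C)$ with $J_n$ the nilpotent Jordan block: every coordinate image is nilpotent, yet $\|a^m\|=1$ for all $m$. The ``characterization in Shalit--Shamovich'' you invoke is never made precise. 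Lemma~\ref{lem:max-radius} only identifies $\rho_{\max\ell^1}$ with a spectral radius in $C^*(\mathbb F_g)$; it does not turn finite-level spectral radius bounds into a bound there. The paper gets $\rho_{\max\ell^1}<1$ differently: it applies the realization theorem \cite[Theorem 3.4]{ShalitMosheShamovich2025} to the nc rational function $p^{-1}$, which is regular on an nc polydisk of radius $R>1$, and then argues as in \cite[Proposition 7.1]{jury2025determinantsrandomunitarypencils}.

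Your Higman route can be completed, but the missing ingredient is a \emph{norm} bound on the resolvent rather than a spectral radius bound. Let $v=\sum_a e_a\otimes v_a$ be a unit vector. Compress each $Z_i$ to $W=\mathrm{span}\{v_a,Z_iv_a\}$, which has dimension at most $k(g+1)$. The compressed tuple $Z'$ satisfies $\|Z_i'\|\le\|Z_i\|$ and $(I-\sum_iX_i\otimes Z_i')v=(I-\sum_iX_i\otimes Z_i)v$. Now use compactness at the finitely many levels $\le k(g+1)$. It first gives an $R>1$ with $\det(I-\sum_iX_i\otimes Z_i)\neq0$ on the polydisk of radius $R$ at every level; this is the honest version of your ``uniform $\epsilon$'', which homogeneity alone does not give. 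It then gives, for $1<R'<R$, a bound $\|(I-\sum_iX_i\otimes Z_i)^{-1}\|\le M$ on the polydisk of radius $R'$ at every level. Evaluate at $tU$ with $U$ a unitary tuple and $|t|\le R'$. Since $C^*(\mathbb F_g)$ is residually finite-dimensional and $C^*$-subalgebras are inverse-closed, $I-t\sum_iX_i\otimes\mathfrak u_i$ is invertible. Hence $\rho_{\max\ell^1}(X)\le 1/R'<1$. This works for any monic pencil whose determinant equals $\det p$, so no minimality or removal of nilpotent parts is needed.

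Separately, your device for the $x^*$ variables is not valid. It produces a pencil in the $2g$-tuple $(Z,Z^*)$, not one of the form $\sum_iX_i\otimes Z_i$. Moreover, stability on tuples of the form $(Z,Z^*)$ says nothing about the full $2g$-variable polydisk. The statement only makes sense for $p$ in $x_1,\dots,x_g$ alone, which is how the paper's proof implicitly reads it. For instance, $p=1+\tfrac12x_1^*$ is strictly stable, but $\det p(Z)$ is nonconstant and antiholomorphic, so it cannot equal the determinant of a pencil in $Z$.
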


\begin{proof}    
    That the determinantal representation implies strict stability is immediate from the operator space duality between $\min(\ell^\infty_g)$ and $\max(\ell^1_g)$ and Proposition~\ref{prop:max-radius}(1).
    
    For the converse direction, we first note that by a routine compactness argument, if $p$ is strictly stable with respect to $\min(\ell^\infty_g)$ then there exists an $R>1$ such that for all $Z$ with $\|Z\|_{\min(\ell^1_g)}<R$, we have $\det p(Z)\neq 0$. It follows that the free rational function $r(Z):=p(Z)^{-1}$ is regular in the $\min(\ell^\infty_g)$ ball of radius $R$. The proof then proceeds identically to the proof of \cite[Proposition 7.1]{jury2025determinantsrandomunitarypencils}, using \cite[Theorem 3.4]{ShalitMosheShamovich2025} in place of \cite[Theorem A]{jury-martin-shamovich}.
\end{proof}
\begin{remark}
    The above proof can obviously be generalized to replace $\min\ell^1_g$ and $\max\ell^\infty_g$ with any pair $\mathcal E, \mathcal E^*$ of dual, finite-dimensional operator spaces. 
\end{remark}

\begin{proposition}[Linear fractional determinantal representations]\label{prop:linfrac}
  Let $r(x)$ be an nc rational function of $g$ variables  which is regular at $0$ and normalized to have $r(0)=1$. Then there exist monic linear pencils
  \[
  L_P(x) = I+\sum_{j=1}^g P_j x_j, \quad L_Q(x) = I+\sum_{j=1}^g Q_j x_j
  \]
  such that for all $X\in dom(r)$
  \[
  \det L_P(X) = \det L_Q(X) \det r(X).
  \]
  Moreover $Q$ may be chosen so that $\det L_Q(X)\neq 0$ for all $X\in dom(r)$, and hence we have for all $X\in dom(r)$, $\det L_P(X)=0$ if and only if $\det r(X)=0$. 
\end{proposition}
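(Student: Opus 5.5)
We start from a \emph{monic realization} of $r$. Since $r$ is a scalar nc rational function regular at $0$, standard realization theory for nc rational functions gives a minimal (controllable and observable) descriptor realization
\[
r(x) = 1 + c^{*}\Bigl(I_n - \sum_{j=1}^g A_j x_j\Bigr)^{-1}\Bigl(\sum_{j=1}^g b_j x_j\Bigr),
\]
with $A_j\in M_n(\mathbb C)$, $b_j\in\mathbb C^n$ and $c\in\mathbb C^n$. By the Kaliuzhnyi-Verbovetskyi--Vinnikov theorem on domains of minimal realizations, the domain of $r$ at each matrix level coincides with the set where the pencil $I-\sum A_jX_j$ is invertible. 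We then set $L_Q(x):=I_n-\sum_j A_j x_j$, that is, $Q_j=-A_j$. With this choice $\det L_Q(X)\neq 0$ for every $X\in dom(r)$, which gives the ``moreover'' clause.

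Next we use a Schur complement to produce $L_P$. Form the $(n+1)\times(n+1)$ pencil
\[
L_P(x) = \begin{pmatrix} I_n - \sum_j A_j x_j & -\sum_j b_j x_j\\ c^{*}\sum_j A_j x_j\;\; & 1 + c^{*}\sum_j b_j x_j\end{pmatrix}
\]
(or a similar arrangement), built from the identity
\[
1+c^*(I-\mathcal A)^{-1}\mathcal B = 1 + c^*\mathcal B + c^*\mathcal A(I-\mathcal A)^{-1}\mathcal B ,
\]
where $\mathcal A=\sum A_j x_j$ and $\mathcal B=\sum b_j x_j$. The entries are chosen so that the Schur complement of the upper left block is exactly $r(x)$. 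Evaluated at matrix points, the block determinant formula $\det\begin{pmatrix}P&R\\S&T\end{pmatrix}=\det P\,\det(T-SP^{-1}R)$, applied with the Kronecker identifications $(\cdot)\otimes I_N$, gives $\det L_P(X)=\det L_Q(X)\det r(X)$ on $dom(r)$. The pencil $L_P$ is linear with constant term $I_{n+1}$, so it is monic, as required.

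The main point to check is the sign bookkeeping in the choice of blocks. We must verify that the lower-right corner minus the correction term equals $r$ and that no constant off-diagonal terms appear; this is what forces us to use the rewritten form above, so that every off-diagonal entry has zero constant term. If this becomes awkward, we would instead take $L_P(x)=I_n-\sum_j (A_j - b_jc^*)x_j$. This is the classical formula for the realization of $r^{-1}$, and via the matrix determinant lemma it gives
\[
\det\bigl(I-\mathcal A+\mathcal B c^*\bigr)=\det(I-\mathcal A)\bigl(1+c^*(I-\mathcal A)^{-1}\mathcal B\bigr),
\]
once we use the Kronecker form $\det(I\otimes I_N - \mathcal A + (\mathcal B)(c^*\otimes I_N))$ and Sylvester's identity at level $N$. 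This gives the factorization directly with $P_j=-(A_j-b_jc^*)$, and it is the version we would actually carry out. The final equivalence, $\det L_P(X)=0\iff\det r(X)=0$ on $dom(r)$, is then immediate from $\det L_Q(X)\neq0$.
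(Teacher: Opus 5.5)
Your argument is correct and is essentially the paper's. Both start from a minimal realization and use the Kaliuzhnyi-Verbovetskyi--Vinnikov fact that its pencil is invertible exactly on $dom(r)$ to get $L_Q$, then obtain $L_P$ from a Schur-complement determinant identity; the only difference is that the paper applies Cohn's $3\times 3$ block factorization to $v^*(I+Qx)^{-1}u$ and renormalizes at $0$, whereas your Fornasini--Marchesini form (constant $1$ split off) gives the monic pencil $I-\sum_j (A_j-b_jc^*)x_j$ directly via Sylvester's identity at level $N$, with $P$ the same size as $Q$.
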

\begin{proof}
  We fix a {\em minimal realization} $r(x) = v^*(I+\sum Q_jx_j)^{-1} u$. For brevity write $Qx=\sum_{j=1}^g Q_j x_j$. From \cite[Prop. 7.1.5]{cohn-2006} we have the identity
  \[
  \begin{bmatrix} 1 & 0 & 1 \\ u & I+Qx & 0 \\ 0 & v^* & 1\end{bmatrix} =   \begin{bmatrix} 1 & 0 & 0 \\ u & I+Qx & 0 \\ 0 & v^* & 1\end{bmatrix}   \begin{bmatrix} 1 & 0 & 0 \\ 0 & 1 & 0 \\ 0 & 0 & r(x) \end{bmatrix}   \begin{bmatrix} 1 & 0 & 1 \\ 0 & 1 & -(I+Qx)^{-1}u\\ 0 & 0 & 1\end{bmatrix}
        \]
which we write in abbreviated form as $\beta(x) = \alpha(x) \rho(x) \gamma(x)$. Multiplying on the left by $\alpha(0)^{-1}$ and on the right by $\gamma(0)^{-1}$, the identity is rewritten in the form $\tilde{\beta}(x) = \tilde{\alpha}(x) \rho(x) \tilde{\gamma}(x)$, where now each factor is equal to $1$ at $x=0$. In particular $\tilde{\alpha}$ and $\tilde{\beta}$ are monic linear pencils, and $\det \tilde{\alpha}(X) = \det L_Q(X)$. Since the realization is minimal, it follows that $\det L_Q(X)=0$ if and only if $X\notin dom(r)$. We finally observe that $\det \tilde{\gamma}(X)\equiv 1$, and there is a monic pencil $L_P$ so that $\det \tilde{\beta}(X) = \det L_P(X)$. Since by construction $\det \rho(x) = \det r(x)$, these $P$ and $Q$ satisfy the conclusion of the theorem.         
\end{proof}

\begin{corollary}\label{cor:free-sslt-rational} Let $q(x), r(x)$ be nc rational functions of $g$ variables whose domains contain an nc polydisk of radius $R>1$ centered at $0$, and which are strictly stable with respect to the row ball. 
 Then there exist pencils $A,B$ strictly stable for the row ball, and pencils $C,D$ strictly stable for $\min\ell^\infty_g$, such that 
 \[
   \det q(X) = \frac{\det L_A(X)}{\det L_C(X)}, \quad    \det r(X) = \frac{\det L_B(X)}{\det L_D(X)}
   \]
   Moreover
   \[
   \lim_{N\to \infty} \mathbb E [ \det q(U^{(N)}) \overline{ \det r(U^{(N)})}] = \frac{\det L_A(\overline{-D})\det L_C(\overline{-B})}{\det L_A(\overline{-B})\det L_C(\overline{-D})}.
   \]
\end{corollary}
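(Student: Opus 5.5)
The plan has two parts: first obtain the determinantal representations from Proposition~\ref{prop:linfrac}, then feed them into Theorem~\ref{thm:free-sslt-pencil}.

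\textbf{Representations.} Normalize $q(0)=r(0)=1$. Strict stability of $q$ gives $\det q(0)\neq 0$, so after dividing by the scalar constant term this is harmless. Apply Proposition~\ref{prop:linfrac} to a minimal realization $q(x)=v^*(I+\sum Q_jx_j)^{-1}u$. This gives monic pencils $L_A:=L_P$ and $L_C:=L_Q$ with
\[
\det q(X)=\det L_A(X)/\det L_C(X)
\]
on $dom(q)$. The same construction applied to $r$ gives $L_B$ and $L_D$.

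\textbf{Stability of $C,D$.} By minimality, $\det L_C(X)=0$ exactly when $X\notin dom(q)$. Since $dom(q)$ contains the nc polydisk of radius $R>1$, the pencil $L_C$ is nonsingular on the closed nc polydisk $\{\|Z\|_{\min\ell^\infty_g}\le 1\}$. So $L_C$, viewed as a linear polynomial with $L_C(0)=I$, is strictly stable with respect to $\min\ell^\infty_g$. I will then use Proposition~\ref{prop:stable-polynomials-generalization} to convert this into the condition $\rho_{\max}(C)<1$. For a linear pencil one expects $\rho_{\max}(C)<1$ directly, by the duality argument of Proposition~\ref{prop:max-radius}(1) together with the scaling $X\mapsto tX$, $t\le R$. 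If needed, I will replace $C$ by the tuple produced by Proposition~\ref{prop:stable-polynomials-generalization} having the same determinant. The same argument gives $\rho_{\max}(D)<1$.

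\textbf{Stability of $A,B$.} The zeros of $\det L_A$ on $dom(q)$ coincide with the zeros of $\det q$. Strict stability of $q$ with respect to the row ball therefore says $\det L_A(Z)\ne 0$ for $\|Z\|_{row}\le 1$, at least for $Z\in dom(q)$. For $Z$ outside $dom(q)$ but in the closed row ball, I need to rule out zeros separately. Here I use that the row ball of radius $1$ sits inside the nc polydisk of radius $1<R$, because $\|Z_j\|\le\|Z\|_{row}$. So the closed row ball lies in $dom(q)$ and the issue disappears. Then Proposition~\ref{prop:stable-polynomials-generalization}, in its row-ball/outer-spectral-radius version (cf.\ the remark after it with the dual pair row/column, and \cite[Proposition 7.1]{jury2025determinantsrandomunitarypencils}), lets me replace $A$ by a tuple with the same determinant and $\rho_{out}(A)<1$. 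The same holds for $B$.

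\textbf{Main obstacle and conclusion.} The main obstacle is this replacement step. The modified tuples must still produce the same determinants when evaluated on unitaries $U^{(N)}$ of every size. They must also give the same values when evaluated at the points $\overline{-B}$ and $\overline{-D}$ that appear in the limiting formula. I will handle this by noting that the determinantal identities hold on a full nc neighborhood (the polydisk of radius $R$), so they extend by analytic continuation wherever both sides are defined. Unitaries lie in the closed polydisk of radius $1<R$, so the identity $\det q(U)=\det L_A(U)/\det L_C(U)$ holds for all $U$. With all four tuples in hand, Theorem~\ref{thm:free-sslt-pencil} applies directly and yields the stated limit.
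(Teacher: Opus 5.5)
Your proposal is correct and follows the paper's route: Proposition~\ref{prop:linfrac} gives the representations, and Theorem~\ref{thm:free-sslt-pencil} gives the limit. It also fills in the stability and tuple-replacement steps that the paper's two-line proof leaves implicit. Those steps are sound, since determinants of monic pencils agreeing on a set with nonempty interior at each level agree identically, so the replaced tuples still represent $\det q$ and $\det r$ at every unitary.

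There are two small quibbles. First, normalizing $q(0)=r(0)=1$ is not a harmless reduction, because it changes $\det q(U^{(N)})$ by the factor $q(0)^N$; it is an implicit hypothesis of the corollary, forced by evaluating $\det q=\det L_A/\det L_C$ at $X=0$. Second, duality via Proposition~\ref{prop:max-radius}(1) only shows that $\rho_{\max}(C)<1$ implies stability, not the converse, so your appeal to Proposition~\ref{prop:stable-polynomials-generalization} is what actually carries that step.
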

\begin{proof}
    The first claim is an immediate consequence of Proposition~\ref{prop:linfrac}. The second claim then follows by Theorem~\ref{thm:free-sslt-pencil}.
\end{proof}

\printbibliography
\end{document}